\documentclass[11pt]{article}
\usepackage[top=2.5cm, bottom=2.5cm, left=2.5cm, right=2.5cm] {geometry}
\usepackage{lineno}
\usepackage{comment}
\usepackage{amsmath,amssymb,theorem,color,amsfonts,mathrsfs}
\numberwithin{equation}{section} 
\usepackage{amssymb}
\usepackage{color}

\usepackage{amsmath}
\usepackage{graphicx}
\usepackage{amsfonts}%
\newcommand{\R}{\ensuremath{\mathbb{R}}}

\newcommand{\N}{\ensuremath{\mathbb{N}}}

\newcommand{\cA}{\mathcal{A}}
\newcommand{\cC}{\mathscr{C}}

\newcommand{\bP}{\mathbb{P}}
\newcommand{\cP}{\mathcal{P}}

\newcommand{\mP}{\mathbb{P}}

\newcommand{\X}{\mathbb{X}}

\newcommand{\E}{\mathbb{E}}

\newcommand{\cD}{\mathcal{D}}

\newcommand{\cL}{\mathcal{L}}  

\newcommand{\bx}{\mathbf x}
\newcommand{\bX}{\mathbf X}

\newcommand{\cB}{\mathcal{B}}

\newcommand{\tp}{p{\rm -var}}
\newcommand{\tq}{q{\rm -var}}

{\Large }

\newcommand{\ltn}{\ensuremath{\left| \! \left| \! \left|}}
\newcommand{\rtn}{\ensuremath{\right| \! \right| \! \right|}}

\newtheorem{theorem}{Theorem}
{ \theorembodyfont{\normalfont} 
	\newtheorem{example}[theorem]{Example}
	\newtheorem{remark}[theorem]{Remark}
}

\newtheorem{lemma}[theorem]{Lemma}
\newtheorem{corollary}[theorem]{Corollary}
\newtheorem{proposition}[theorem]{Proposition}

\newcounter{enumctr}

\makeatletter
\def\and{%
\end{tabular}%
\begin{tabular}[t]{c}}%
\def\@fnsymbol#1{\ensuremath{\ifcase#1\or a\or b\or c\or
		d\or e\or f\or g\or h\or i\else\@ctrerr\fi}}
\makeatother

\begin{document}
	
	\title{Taming discrete rough paths via strong Lyapunov functions}
	\author{Luu Hoang Duc\thanks{Department of Mathematics, University of Klagenfurt, Austria \& Max Planck Institute for Mathematics in the Sciences, Leipzig, Germany \& Institute of Mathematics, Vietnam Academy of Science and Technology, Vietnam. {\tt \small duc.luu@aau.at, duc.luu@mis.mpg.de, lhduc@math.ac.vn}} }
	\date{\bf working paper}
	\maketitle
	
	\begin{abstract}
     Based on the newly introduced concept of {\it strong Lyapunov functions} for rough differential equations \cite{ducjost25}, we study a tamed numerical scheme to approximate the solutions of the continuous system. We derive explicit estimates of solution norms of the tamed system which look similar to those of the continuous system. As a result, we prove the convergence of the tamed scheme in the $L^1$ sense. For systems with the negative gradient condition, we prove the existence of a numerical pullback attractor for the generated random dynamical system from the tamed numerical scheme which is integrable and upper semi-continuous w.r.t. the scheme step size. 
	\end{abstract}
	
	{\bf Keywords:}
	rough differential equations, tamed numerical scheme, Doss-Sussmann transformation, strong Lyapunov functions, pullback attractors, upper semi-continuity.
	
	\section{Introduction}\label{sec:intro}
  Consider the rough differential equation
\begin{equation}\label{fSDE0}
	dy_t = f(y_t)dt + g(y_t)d x_t.
\end{equation}
The solution of \eqref{fSDE0} is often understood in the sense of either Lyons-Davie \cite{lyons98} or of Friz-Victoir \cite{friz}, which does not need to specify rough integrals, or in the sense of Gubinelli \cite{gubinelli} with rough integrals. The following assumptions are imposed for coefficient functions.
		
(${\textbf H}_f$) $f:\R^d \to \R^d$ is locally Lipschitz continuous.\\

(${\textbf H}_g$) $g$ is in $C^3_b(\R^d,\R^{d\times m})$ where we define
\begin{equation}\label{gcond.new}
	C_g := \max \Big\{\|g\|_\infty,\|Dg\|_\infty,\|D^2g\|_\infty, \|D^3g\|_\infty\Big\}.
\end{equation}

	(${\textbf H}_X$) For a given $\nu \in (\frac{1}{3},\frac{1}{2}]$, $\R^m \ni X_t(\omega)$ is a stochastic process with stationary increments, of which almost all realizations $x$ belong to the space $C^{\nu}(\R, \R^m)$ of $\nu-$H\"older continuous paths, such that $x$ is truly rough and can be lifted into a rough path lift $\bx = (x,\X)$ of a stochastic process $(X_\cdot(\omega),\X_{\cdot,\cdot}(\omega))$ with stationary increments, and the estimate
	\begin{equation}\label{conditionx}
		\E \Big(\|X_{s,t} \|^p +\|\X_{s,t}\|^{q}\Big)\leq C_{T,\nu} |t-s|^{p \nu },\quad \forall s,t \in [0,T]
	\end{equation}
	holds for any $[0,T]$, with $p\nu \geq 1, q = \frac{p}{2}$ and some constant $C_{T,\nu}$. (Examples of such processes include multi-dimensional fractional Brownian motions).
		
	System \eqref{fSDE0} is often solved with the Doss-Sussmann technique \cite{Sus78}, namely, under a transformation $y_t = \phi_{t,\tau_k}(\bx,z_t)$ where $\phi$ is the pure rough flow, there is an one-to-one correspondence between a solution $y_t$ of \eqref{fSDE0} on a certain interval $[\tau_k,\tau_{k+1}]$ 
	and a  solution $z_t$ of the associated ordinary differential equation
	\begin{equation}\label{ascoODE}
		\dot{z}_t = \Big[\frac{\partial \phi }{\partial z}(t,\tau_k,\bx,z_t)\Big]^{-1} f( \phi_{t,\tau_k}(\bx,z_t)) =(Id + \psi_t) f(z_t+\eta_t),\quad t \in [\tau_k,\tau_{k+1}],\ z_{\tau_k} = y_{\tau_k},
	\end{equation}
	such that we can control $\eta\in \R^d$ and $\psi \in \R^{d \times d}$ so that
	\begin{equation}\label{HK}
		\|\eta_t\|,	\|\psi_t\| \leq \lambda,\quad \forall t \in [\tau_k,\tau_{k+1}].
	\end{equation}
However, estimating the solution norms for \eqref{ascoODE} is a challenge which requires a technical condition that is difficult to be applied, namely the drift has to be of linear growth in the perpendicular direction (see e.g. \cite{duckloeden}). Another difficulty is to approximate the solution of \eqref{fSDE0} using the Euler scheme 
	\begin{equation}\label{Euler}
				y_{t_{k+1}} = y_{t_k} + f(y_{t_k}) (t_{k+1}-t_k)  + g(y_{t_k})x_{t_k, t_{k+1}} + Dg(y_{t_k})g(y_{t_k})\X_{t_k,t_{k+1}},\quad k \in \N.
	    \end{equation}
In \cite{duckloeden}, it is proved that the scheme \eqref{Euler} approximates the continuous system \eqref{fSDE0} in the pathwise sense, using the cut-off technique. A drawback of this method is that, while the approximation error is estimated as $C(\bx)|\Pi|^{3\nu-1}$, the constant $C(\bx)$ might not be integrable in general; this makes it difficult to prove the approximation of the numerical solution of \eqref{Euler} to the continuous solution of \eqref{fSDE0} in $L^1$. Later, it is proved in \cite{congduchong23} that $C(\bx) \in L^1$ under an additional assumption on the global Lipschitz continuity of the drift $f$.

Recently, another condition has been proposed in \cite{ducjost25}, which introduces the new notion of a {\it strong Lyapunov function} $V$ with the existence of a small constant $\lambda_0 \in (0,1)$ and constants $C_{\lambda_0}>0, \delta \in \R$ such that
		\begin{equation}\label{gradientnegativity1}
			\sup \limits_{\substack{\psi \in \R^{d\times d},\|\psi\|\leq \lambda_0 \\\eta \in \R^d,\|\eta\| \leq \lambda_0}} \langle \nabla V(z), (I + \psi) f(z+\eta)\rangle \leq C_{\lambda_0} +\delta V(z),\quad \forall z \in \R^d.
		\end{equation}  
Under condition \eqref{gradientnegativity1}, the solution norm of \eqref{fSDE0} is estimated via the Lyapunov function, and its integrability is also proved, see \cite[Theorem 9]{ducjost25}. Moreover, under the negative gradient condition with $\delta <0$, one can prove the existence of the global random pullback attractor for the random dynamical system $\Phi$ generated from \eqref{fSDE0}. As such, the condition is well applied to many non-linear drifts such as Fitzhugh-Nagumo neuro and Lorenz systems.  

Following the setting in \cite{ducjost25}, this paper aims to close the gap on the numerical scheme above by instead considering the tamed Euler scheme
\begin{equation}\label{tamedEulerintro}
				y_{t_{k+1}} = y_{t_k} + \frac{f(y_{t_k}) (t_{k+1}-t_k)}{1+ M\|f(y_{t_k})\| (t_{k+1}-t_k)} + g(y_{t_k})x_{t_k, t_{k+1}} + Dg(y_{t_k})g(y_{t_k})\X_{t_k,t_{k+1}},\quad k \in \N
        \end{equation} 
for a parameter $M>0$. An advantage of the tamed scheme \eqref{tamedEulerintro} is that it enables us to prove the integrability of its solution norm estimate under the condition \eqref{gradientnegativity1}, thus proving the convergence of the scheme in both the pathwise and the $L^1$ senses. Moreover, under the negative gradient condition, we can prove the existence of the global numerical pullback attractor for the discrete random dynamical system generated by \eqref{tamedEulerintro} for a regular grid $\Pi^\Delta$ with step size $\Delta$. The upper semi-continuity of the numerical attractor w.r.t. the step size to the continuous attractor and the upper semi-continuity w.r.t. the noise intensity $C_g$ to the numerical attractor of the unperturbed drift are also proved in both the pathwise and the $L^1$ senses. 
        
\section{Strong Lyapunov functions}

           \subsection{$\mathcal{K}$ spaces}\label{apendix1}
       Following \cite{ducjost25}, recall that a function $\kappa: \R_+ \to \R_+$ is a $\mathcal{K}^\prime_\infty$ function if it is continuous, strictly increasing and $\lim \limits_{t \to \infty} \kappa(t) = \infty$. A function $\kappa \in \mathcal{K}^\prime_\infty$ is called a $\mathcal{K}_\infty$ function if, in addition, $\kappa(0)=0$. 
    A function $\kappa \in \mathcal{K}^\prime_\infty$ is called a $\mathcal{K}^{\rm tempered}_\infty$ function if it satisfies 
    \begin{equation}\label{tempered}
        \limsup \limits_{\epsilon \to 0} \limsup \limits_{t \to \infty}  \frac{1}{t} \log \kappa(e^{\epsilon t}) =0.
    \end{equation}
    We call a function  $\kappa \in \mathcal{K}^\prime_\infty$ a $\mathcal{K}^{\rm poly}_\infty$ if there exists constants $C_\kappa,\rho_\kappa >0$ such that 
    \begin{equation}\label{polyest}
        \kappa (t) \leq C_\kappa (1+t^{\rho_\kappa}),\quad \forall t \in \R_+.
    \end{equation}
    Note that if $\kappa \in \mathcal{K}^\prime_\infty$ then so is its inverse function $\kappa^{-1}$. In addition, it is proved in \cite{ducjost25} that $
    \mathcal{K}^{\rm poly}_\infty \subset \mathcal{K}^{\rm tempered}_\infty$ and 
    \begin{lemma}\label{Kfunctions}
If $\alpha, \beta \in \mathcal{K}^{\rm tempered}_\infty$ then so does $\beta \circ \alpha$. Also, if $\alpha, \beta \in \mathcal{K}^{\rm poly}_\infty$ then so does $\beta \circ \alpha$.
    \end{lemma}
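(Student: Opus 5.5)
The plan is to check the three defining properties in turn: membership in $\mathcal{K}^\prime_\infty$, then the polynomial bound \eqref{polyest}, then the temperedness condition \eqref{tempered}. Throughout we use only monotonicity and elementary inequalities.

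\emph{Membership in $\mathcal{K}^\prime_\infty$.} Let $\alpha,\beta\in\mathcal{K}^\prime_\infty$. The composition $\beta\circ\alpha:\R_+\to\R_+$ is continuous as a composition of continuous maps. It is strictly increasing as a composition of strictly increasing maps. Since $\alpha(t)\to\infty$ and $\beta(s)\to\infty$ as $s\to\infty$, we get $\beta(\alpha(t))\to\infty$. So $\beta\circ\alpha\in\mathcal{K}^\prime_\infty$ in both cases.

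\emph{Polynomial case.} By \eqref{polyest} for $\alpha$ and monotonicity of $\beta$,
\[
\beta(\alpha(t))\le C_\beta\bigl(1+C_\alpha^{\rho_\beta}(1+t^{\rho_\alpha})^{\rho_\beta}\bigr).
\]
We then apply $(a+b)^r\le \max(1,2^{r-1})(a^r+b^r)$ for $a,b\ge0$, $r>0$. This gives $\beta(\alpha(t))\le C(1+t^{\rho_\alpha\rho_\beta})$ with an explicit $C$ depending on $C_\alpha,C_\beta,\rho_\alpha,\rho_\beta$. Hence $\beta\circ\alpha\in\mathcal{K}^{\rm poly}_\infty$ with $\rho_{\beta\circ\alpha}=\rho_\alpha\rho_\beta$.

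\emph{Tempered case: lower bound.} For $\kappa\in\mathcal{K}^\prime_\infty$ and $\epsilon>0$, $t\ge0$, monotonicity gives $\kappa(e^{\epsilon t})\ge\kappa(1)>0$, since $\kappa(1)>\kappa(0)\ge0$. Hence $\liminf_{t\to\infty}\frac1t\log\kappa(e^{\epsilon t})\ge0$. It therefore suffices to show that the double limsup in \eqref{tempered} for $\beta\circ\alpha$ is $\le0$.

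\emph{Tempered case: upper bound.} Fix $\eta>0$. Since $\beta$ is tempered, there is $\epsilon_1>0$ such that
\[
\limsup_{t\to\infty}\tfrac1t\log\beta(e^{s t})\le\eta \quad\text{for all } s\in(0,\epsilon_1).
\]
Fix one such $s$. Since $\alpha$ is tempered, there is $\epsilon_0>0$ such that for $\epsilon<\epsilon_0$ we have $\limsup_{t\to\infty}\frac1t\log\alpha(e^{\epsilon t})<s$. So for each such $\epsilon$ and all large $t$, $\alpha(e^{\epsilon t})\le e^{st}$. Monotonicity of $\beta$ then yields $\beta(\alpha(e^{\epsilon t}))\le\beta(e^{st})$ for large $t$, and therefore
\[
\limsup_{t\to\infty}\tfrac1t\log\beta(\alpha(e^{\epsilon t}))\le\eta .
\]
This holds for every $\epsilon<\epsilon_0$, so the limsup as $\epsilon\to0$ is $\le\eta$. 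Since $\eta>0$ was arbitrary, the limsup is $\le0$, and combined with the lower bound it equals $0$.

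\emph{Main obstacle.} The only delicate point is the order of quantifiers in the last step. We must first choose the exponent $s$ admissible for $\beta$, and only then shrink $\epsilon$ so that $\alpha(e^{\epsilon t})$ grows at most like $e^{st}$. With the quantifiers in that order, the argument goes through directly.
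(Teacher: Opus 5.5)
Your proof is correct. The $\mathcal{K}^\prime_\infty$ check, the bound $\beta(\alpha(t))\le C(1+t^{\rho_\alpha\rho_\beta})$, the lower bound $\liminf_{t\to\infty}\frac1t\log\kappa(e^{\epsilon t})\ge 0$ (obtained from $\kappa(e^{\epsilon t})\ge\kappa(1)>0$), and the upper bound all go through, with the quantifiers in the order you describe. The paper does not prove this lemma itself; it quotes it from \cite{ducjost25}. So there is no argument in the text to set yours against, and yours is a complete, self-contained verification.

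One observation would shorten the tempered half considerably. Substituting $u=e^{\epsilon t}$ gives, for every $\epsilon>0$,
\[
\limsup_{t\to\infty}\frac1t\log\kappa(e^{\epsilon t})=\epsilon\,\limsup_{u\to\infty}\frac{\log\kappa(u)}{\log u}\qquad(\text{with }\epsilon\cdot\infty=\infty).
\]
With \eqref{tempered} as written, a function $\kappa\in\mathcal{K}^\prime_\infty$ is therefore tempered if and only if $\limsup_{u\to\infty}\log\kappa(u)/\log u<\infty$. That is, $\kappa(u)\le u^{\rho}$ for all large $u$, which together with boundedness of $\kappa$ on compact sets is exactly \eqref{polyest}. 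Hence $\mathcal{K}^{\rm tempered}_\infty=\mathcal{K}^{\rm poly}_\infty$, the two assertions of the lemma are the same statement, and your polynomial computation alone proves both. Your direct quantifier argument has the merit of not needing this identification. Still, it is worth knowing that the tempered condition, as defined here, is no weaker than polynomial growth.
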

 
    The following assumption is imposed for the strong Lyapunov functions.\\
	
	(${\textbf H}_V$) There exists for a strong Lyapunov-typed function $V \in C^1(\R^d,\R_+)$ for the drift $f$ that satisfies 
	\begin{itemize}
		\item there exists functions $\alpha,\beta \in \mathcal{K}^{\rm tempered}_\infty$ such that $\alpha^{-1}\in \mathcal{K}^{\rm tempered}_\infty$ and
		\begin{equation}\label{Vbound} 
			\alpha(\|z\|) \leq V(z) \leq \beta(\|z\|),\quad \forall z \in \R^d;	
		\end{equation}
		\item there exists a constant $L_V > 0$ such that
		\begin{equation}\label{lipschitzV}
			\| \nabla V(z)\| \leq L_V,\quad \forall z \in \R^d;
		\end{equation}
		\item there exists a constant $\lambda_0 \in (0,1)$ associated with parameters $C_{\lambda_0}, \delta >0$ such that
		\begin{equation}\label{gradientnegativity}
			\sup \limits_{\substack{\psi \in \R^{d\times d},\|\psi\|\leq \lambda_0 \\\eta \in \R^d,\|\eta\| \leq \lambda_0}}  \langle \nabla V(z), (I + \psi) f(z+\eta)\rangle \leq C_{\lambda_0} + \delta V(z),\quad \forall z \in \R^d.
		\end{equation}
	\end{itemize}
	\begin{remark}\label{remLgamma}
    i, In general, condition \eqref{gradientnegativity} can be written in the form
    \begin{equation}\label{gradientgeneral}
        \sup \limits_{\substack{\psi \in \R^{d\times d},\|\psi\|\leq \lambda_0 \\\eta \in \R^d,\|\eta\| \leq \lambda_0}}  \langle \nabla V(z), (I + \psi) f(z+\eta)\rangle \leq \gamma(V(z)),\quad \forall z \in \R^d,
    \end{equation}
    for a certain function $\gamma$ that is one-sided globally Lipschitz continuous w.r.t. the Lipschitz constant $L_\gamma$. As seen in \cite[Theorem 9]{ducjost25}, \eqref{gradientgeneral} is enough to prove the existence and uniqueness theorem. However, in this paper, we consider only the two simplest cases $\gamma (u) = C_{\lambda_0} +\delta u$ and $\gamma (u) = C_{\lambda_0} -\delta u$.\\
          
ii, In case of additive noise, i.e. $g(\cdot) \equiv \bar{g}\in \cL(\R^m,\R^d)$ is a constant matrix, it follows that 
\begin{equation}\label{etaadditive}
\eta_t = \bar{g}x_{\tau_k,t},\quad \forall t\in [\tau_k,\tau_{k+1}];
\end{equation}
thus $\psi \equiv 0$ and condition \eqref{gradientnegativity} can be modified into a simpler form
		\begin{equation}\label{addgradientnegativity}
			\sup \limits_{\eta \in \R^d,\|\eta\| \leq \lambda_0}  \langle \nabla V(z), f(z+\eta)\rangle \leq C_{\lambda_0} + \delta V(z),\quad \forall z \in \R^d,
		\end{equation}
        for a certain constant $\lambda_0 \in (0,1)$ associated with parameters $C_{\lambda_0}, \delta >0$.\\

    iii, For the deterministic (unperturbed) system $\dot{y}=f(y)$, the condition \eqref{gradientnegativity} is reduced to
    \begin{equation}\label{detergradientnegativity}
			\langle \nabla V(z), f(z)\rangle \leq C + \delta V(z),\quad \forall z \in \R^d.
		\end{equation}
        In Section \ref{tamesec}, we show in Remark \ref{detertame} that condition \eqref{detergradientnegativity} is sufficient to prove the convergence of a tamed numerical scheme to approximate the solution of the continuous system $\dot{y}=f(y)$.
	\end{remark}	
       
  \section{A tamed numerical scheme}\label{tamesec}
   In this section, we study a tamed method for the discrete system, i.e. we consider the following system 
		\begin{equation}\label{tamedEuler}
			\begin{split}
				y_0 &\in \R^d,\\
				y_{t_{k+1}} &= y_{t_k} + \frac{f(y_{t_k}) (t_{k+1}-t_k)}{1+ M\|f(y_{t_k})\| (t_{k+1}-t_k)} + g(y_{t_k})x_{t_k, t_{k+1}} + Dg(y_{t_k})g(y_{t_k})\X_{t_k,t_{k+1}},\quad k \in \N
			\end{split}
        \end{equation}
where $M >0$ is a parameter that will be specified later and $\Pi = \{t_k\}_{k \in \N}$ is the discrete time set with its resolutions 
\begin{equation}
\begin{split}
|\Pi| &:= \sup \{|t_{k+1}-t_k|: k \in \N\} \in (0,1),\\
|\Pi(I)| &:= \sup \{|t_{k+1}-t_k|: t_k \in \Pi(I)\} \in (0,1),\quad \forall \Pi(I) \subset \Pi.    
\end{split}    
\end{equation}

The Euler scheme has been studied recently in \cite{duckloeden} for one sided Lipschitz drift $f$ with the convergence rate of polynomial $|\Pi|^{3\alpha-1}$ and also dependent of the solution and the noise itself. The results in \cite{congduchong23} confirm that the convergence rate is independent of the solution norm and the noise under the assumption that the drift $f$ is globally Lipschitz continuous. 

In this section, we extend the Doss-Sussmann technique to control the solution growth for the discrete time set $\Pi$. Consider the pure rough difference equation
\begin{equation}\label{pureRDE}
    \phi_{t_{k+1}} =  \phi_{t_k} + g(\phi_{t_k})x_{t_k, t_{k+1}} + Dg(\phi_{t_k})g(\phi_{t_k})\X_{t_k,t_{k+1}},\quad k \in \N.
\end{equation}

\begin{lemma}\label{lempureRDE}
Let $\phi,\bar{\phi}$ be the solutions of the rough difference equation \eqref{pureRDE}.
    For any $s,t \in \Pi, s< t$ such that 
    \begin{equation}\label{lambdathreshold}
    C_pC_g \ltn \bx \rtn_{\tp,\Pi[s,t]} \leq \lambda \leq \frac{1}{8},
    \end{equation}
    the following estimates hold 
        \begin{eqnarray}\label{pureRDEest1}
            &i,& \|\phi_t-\phi_s\| \leq \ltn \phi\rtn_{\tp,\Pi[s,t]} \leq 4\lambda; \label{pureRDEest1}\\
            &ii,& \| (\phi_t-\bar{\phi}_t)-(\phi_s-\bar{\phi}_s)\|\leq \ltn \phi-\bar{\phi}\rtn_{\tp,\Pi[s,t]} \leq 4\lambda\Big(\|\phi_s -\bar{\phi}_s\| \wedge \|\phi_t -\bar{\phi}_t\|\Big). \label{pureRDEest2}
        \end{eqnarray}
\end{lemma}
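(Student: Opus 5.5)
We prove both estimates with a discrete sewing lemma (discrete controlled-path estimate). The mesh is $\Pi[s,t]=\{s=t_i<\dots<t_j=t\}$, and the $p$-variation seminorms are taken over this discrete set. The constant $C_p$ should be chosen large enough that the constant in the discrete sewing estimate for the remainder is absorbed.

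\textbf{Step 1: splitting the increment.} For $t_k\le t_l$ in $\Pi[s,t]$ we write
\[
\phi_{t_k,t_l} = g(\phi_{t_k})x_{t_k,t_l} + Dg(\phi_{t_k})g(\phi_{t_k})\X_{t_k,t_l} + R_{t_k,t_l}.
\]
The two-parameter remainder $R$ vanishes on consecutive grid points. The key identity is the Chen-type defect of the germ $\Xi_{u,v}:=g(\phi_u)x_{u,v}+Dg(\phi_u)g(\phi_u)\X_{u,v}$:
\[
\delta\Xi_{u,w,v}=\Xi_{u,v}-\Xi_{u,w}-\Xi_{w,v} = -\big[g(\phi_w)-g(\phi_u)-Dg(\phi_u)g(\phi_u)x_{u,w}\big]x_{w,v} - \big[(Dg\,g)(\phi_w)-(Dg\,g)(\phi_u)\big]\X_{w,v}.
\]
Here we used Chen's relation $\X_{u,v}=\X_{u,w}+\X_{w,v}+x_{u,w}\otimes x_{w,v}$. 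We then Taylor-expand $g(\phi_w)-g(\phi_u)$ around $\phi_u$. The bracket becomes $Dg(\phi_u)R_{u,w}+Dg(\phi_u)(Dg\,g)(\phi_u)\X_{u,w}+O(C_g\|\phi_{u,w}\|^2)$. This gives
\[
\|\delta\Xi_{u,w,v}\|\lesssim C_g\Big(\|R_{u,w}\|+C_g^2\|\X_{u,w}\|+C_g\|\phi_{u,w}\|^2\Big)\|x_{w,v}\|+C_g^2\|\phi_{u,w}\|\,\|\X_{w,v}\|.
\]
Because $R$ vanishes on adjacent points, the standard removal-of-points argument (Young/Lyons) applies. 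We remove one point at a time, choosing a point whose neighbours carry control at most $2\omega/(N-1)$, with $\omega$ the control of $\bx$. This yields a bound on $\ltn R\rtn_{p/3,\Pi[s,t]}$ in terms of $C_g\ltn\bx\rtn_{\tp}$ multiplied by $\ltn\phi\rtn_{\tp}+\ltn R\rtn_{p/3}+C_g\ltn\bx\rtn_{\tp}$, summed with a $\zeta(3/p)$-type constant.

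\textbf{Step 2: closing the a priori bound.} From the splitting we get
\[
\ltn\phi\rtn_{\tp}\le C_g\ltn\bx\rtn+C_g^2\ltn\bx\rtn^2+\ltn R\rtn_{p/3}.
\]
Under $C_pC_g\ltn\bx\rtn_{\tp,\Pi[s,t]}\le\lambda\le 1/8$, the products $C_g\ltn\bx\rtn\cdot(\cdots)$ carry a small factor $\lambda/C_p$. A bootstrap then gives $\ltn\phi\rtn_{\tp}\le 2\lambda+$ small $\le 4\lambda$.

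The bootstrap is genuinely closable on a discrete grid, with no circularity. The seminorms are finite sums, and we can induct on the number of grid points in $[s,t]$: the estimate holds for every sub-interval with fewer points, since the smallness condition \eqref{lambdathreshold} is inherited by sub-intervals. Part i then follows, since $\|\phi_t-\phi_s\|\le\ltn\phi\rtn$ trivially.

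\textbf{Step 3: the difference estimate ii.} We apply the same scheme to $\theta=\phi-\bar\phi$. The germ becomes $[g(\phi_u)-g(\bar\phi_u)]x+[(Dg\,g)(\phi_u)-(Dg\,g)(\bar\phi_u)]\X$. This is linear in $\theta$ with coefficients bounded by $C_g\|\theta_u\|$ and $C_g^2\|\theta_u\|$. Its defect involves second differences $g(\phi_w)-g(\bar\phi_w)-g(\phi_u)+g(\bar\phi_u)$, which are bounded via $D^2g, D^3g$ (this is why $C^3_b$ is needed). The result is a bound of order $C_g(\ltn\theta\rtn+\|\theta_u\|\,\ltn\phi\rtn)$. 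Using i and the same induction, we obtain $\ltn\theta\rtn_{\tp}\le 4\lambda\|\theta_s\|$.

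For the bound with $\|\theta_t\|$, we write $\|\theta_s\|\le\|\theta_t\|+\ltn\theta\rtn\le\|\theta_t\|+4\lambda\|\theta_s\|$. Hence $\|\theta_s\|\le\frac{1}{1-4\lambda}\|\theta_t\|\le 2\|\theta_t\|$, since $\lambda\le 1/8$. To land on exactly $4\lambda\|\theta_t\|$ we must first prove the sharper constant $2\lambda$ in place of $4\lambda$, which $C_p$ large allows. Then $2\lambda\cdot 2\|\theta_t\|=4\lambda\|\theta_t\|$.

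\textbf{Main obstacle.} The main obstacle is Step 1 for the difference equation: the careful second-order Taylor bookkeeping of the defect $\delta\Xi$ for $\theta$, and ensuring the discrete sewing constants are uniform, so that a single $C_p$ works for both estimates.
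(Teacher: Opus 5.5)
Your proposal follows essentially the paper's route: the paper's proof only says to repeat the continuous-time argument of \cite[Proposition 2.1]{duchongcong26} on the grid $\Pi$, namely a discrete sewing (point-removal) estimate for the germ $g(\phi_u)x_{u,v}+Dg(\phi_u)g(\phi_u)\X_{u,v}$ and its analogue for $\phi-\bar\phi$, closed by absorption under $C_pC_g\ltn \bx \rtn_{\tp,\Pi[s,t]}\le\lambda$, which is exactly what you outline. Two small bookkeeping corrections: the quadratic Taylor term is $\tfrac12\|D^2g\|_\infty\|\phi_{u,w}\|^2\|x_{w,v}\|\le C_g\|\phi_{u,w}\|^2\|x_{w,v}\|$, not $C_g^2\|\phi_{u,w}\|^2\|x_{w,v}\|$; and as written, your Step 1 bound still contains the full-interval norms $\ltn\phi\rtn$ and $\ltn R\rtn$, so to actually break the circularity you should induct on the pointwise bounds $\|\phi_{a,b}\|\lesssim C_g\ltn \bx \rtn_{\tp,\Pi[a,b]}$ and $\|R_{a,b}\|\lesssim C_g^3\ltn \bx \rtn^3_{\tp,\Pi[a,b]}$, which works because each defect $\delta\Xi_{a,b,c}$ involves $\phi$ and $R$ only on $\Pi[a,b]$, a set with fewer grid points.
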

\begin{proof}
   The proof goes line by line with the proof of \cite[Proposition 2.1]{duchongcong26}, with a small modification that all estimates of the $p$-variation norms and the application of the sewing lemma are for the discrete time set $\Pi$.
   
\end{proof}
Define the two parameter flow $\varphi(t,s,\bx)$ for $s,t\in \Pi, s\leq t$ by: $\varphi(s,s,\bx) = Id$ and
\begin{equation}
    \begin{split}
        \varphi(t,s,\bx) &=\varphi(t,t_{m-1},\bx)\circ \dots \circ \varphi(t_{l+1},s,\bx),\quad \forall s= t_l < \ldots < t_m =t;\\ 
        \varphi(t_{k+1},t_k,\bx)\phi&= \phi_{t_k} + g(\phi)x_{t_k, t_{k+1}} + Dg(\phi)g(\phi)\X_{t_k,t_{k+1}},\quad \forall k \in \N.
    \end{split}
\end{equation}
Then $\varphi(t,s,\bx)$ is a $C^1$ map. We show that
\begin{lemma}\label{lempureRDE2}
Assume that $s,t\in \Pi, s<t$ satisfies \eqref{lambdathreshold}. Then $\varphi(t_k,s,\bx)$ is invertible and $\varphi(t_k,s,\bx)^{-1}$ is $C^1$ for all $t_k \in \Pi[s,t]$. Moreover, we can write for any $\phi,\bar{\phi} \in \R^d$ a form
\begin{equation}\label{est2}
    \varphi(t_k,s)^{-1} \phi -\varphi(t_k,s)^{-1} \bar{\phi} = \Big[I +\psi_{t_k}\Big](\phi-\bar{\phi})
\end{equation}
where $\psi_{t_k} \in \R^{d\times d}$ satisfies 
\begin{equation}\label{psiest}
\psi_s = 0;\quad \|\psi_{t_k}\| \leq 4 \lambda \quad \forall s \leq t_k \leq t.
\end{equation}
\end{lemma}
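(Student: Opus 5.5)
We plan to reduce everything to Lemma \ref{lempureRDE}(ii). Fix $t_k \in \Pi[s,t]$; since $\Pi[s,t_k] \subset \Pi[s,t]$, the threshold \eqref{lambdathreshold} also holds on $[s,t_k]$. For $\phi_s, \bar\phi_s \in \R^d$, set $\phi_{t_k} = \varphi(t_k,s,\bx)\phi_s$ and $\bar\phi_{t_k} = \varphi(t_k,s,\bx)\bar\phi_s$. Then Lemma \ref{lempureRDE}(ii) gives
\[
\|(\phi_{t_k}-\bar\phi_{t_k}) - (\phi_s-\bar\phi_s)\| \le 4\lambda\big(\|\phi_s-\bar\phi_s\|\wedge\|\phi_{t_k}-\bar\phi_{t_k}\|\big) \le \tfrac12 \|\phi_s-\bar\phi_s\|.
\]
Hence $\|\phi_{t_k}-\bar\phi_{t_k}\| \ge \tfrac12\|\phi_s-\bar\phi_s\|$, so $\varphi(t_k,s,\bx)$ is injective.

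For surjectivity, the map $\varphi(t_k,s,\bx) - Id$ is $4\lambda$-Lipschitz. This follows from the same inequality using the first term of the minimum, with $4\lambda \le \tfrac12 < 1$. Given a target $\phi$, the map $z \mapsto \phi - (\varphi(t_k,s,\bx)z - z)$ is therefore a contraction on $\R^d$, and its fixed point $z$ satisfies $\varphi(t_k,s,\bx)z = \phi$. Equivalently, $\varphi$ is a bi-Lipschitz homeomorphism.

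Since $g \in C^3_b$, each one-step map $\phi \mapsto \phi + g(\phi)x + Dg(\phi)g(\phi)\X$ is $C^1$, and so is the composition $\varphi(t_k,s,\bx)$. Its Jacobian $J$ satisfies $\|J - I\| \le 4\lambda \le \tfrac12$, obtained by differentiating the Lipschitz bound (take $\bar\phi_s = \phi_s + \epsilon v$ and let $\epsilon\to0$). Hence $J$ is invertible everywhere, and the inverse function theorem makes $\varphi(t_k,s,\bx)^{-1}$ a $C^1$ map.

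For the representation \eqref{est2}, we use the mean value theorem in integral form applied to $\varphi^{-1}$:
\[
\varphi(t_k,s)^{-1}\phi - \varphi(t_k,s)^{-1}\bar\phi = \int_0^1 D\varphi^{-1}\big(\bar\phi+\theta(\phi-\bar\phi)\big)\,d\theta\,(\phi-\bar\phi).
\]
We then define $\psi_{t_k} := \int_0^1 D\varphi^{-1}(\cdot)\,d\theta - I$. At $t_k = s$ the map is the identity, so $\psi_s = 0$.

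The main obstacle is the bound $\|\psi_{t_k}\| \le 4\lambda$ rather than the naive $4\lambda/(1-4\lambda)$ coming from inverting $J = I + A$ with $\|A\| \le 4\lambda$. The key is to apply Lemma \ref{lempureRDE}(ii) with the \emph{second} term of the minimum. Writing $u = \varphi^{-1}\phi$ and $\bar u = \varphi^{-1}\bar\phi$, we get
\[
\|(\phi-\bar\phi) - (u-\bar u)\| \le 4\lambda\,\|\phi-\bar\phi\|.
\]
So $\varphi^{-1} - Id$ is $4\lambda$-Lipschitz directly. Its derivative $D\varphi^{-1} - I$ then has operator norm at most $4\lambda$ pointwise, and the same holds after averaging over $\theta$, giving $\|\psi_{t_k}\| \le 4\lambda$.
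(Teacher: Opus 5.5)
Your proof is correct. Its second half is exactly the paper's argument: the integral mean value representation, plus the bound $\|\psi_{t_k}\|\le 4\lambda$ obtained from the \emph{second} entry of the minimum in Lemma \ref{lempureRDE}(ii), so that $\varphi(t_k,s,\bx)^{-1}-Id$ is $4\lambda$-Lipschitz (this is \eqref{est1}).

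You differ from the paper in how you establish invertibility. The paper works one step at a time. From the explicit formula for $D\varphi(t_{k+1},t_k,\bx)$ it derives $\min_{\|\xi\|=1}\|D\varphi(t_{k+1},t_k,\bx)\xi\|\ge 1-3\lambda>0$. It then invokes the implicit function theorem to get a $C^1$ inverse for each one-step map, and composes. You instead treat the whole multi-step map $\varphi(t_k,s,\bx)$ at once and extract everything from Lemma \ref{lempureRDE}(ii):
\begin{itemize}
\item injectivity from the lower bound $\|\phi_{t_k}-\bar\phi_{t_k}\|\ge\frac12\|\phi_s-\bar\phi_s\|$;
\item surjectivity from the contraction $z\mapsto \phi-(\varphi(t_k,s,\bx)z-z)$;
\item the Jacobian bound $\|J-I\|\le 4\lambda$, by differentiating the Lipschitz estimate.
\end{itemize}
The paper's route is more hands-on and yields a concrete one-step derivative bound. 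However, the implicit (or inverse) function theorem by itself only gives local invertibility, so global bijectivity is left implicit there. Your contraction argument supplies it. The same argument would also close this at the one-step level, since by the paper's derivative bound the one-step map minus the identity is $3\lambda$-Lipschitz.

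You also pass explicitly from the pointwise bound $\|D\varphi(t_k,s,\bx)^{-1}-I\|\le 4\lambda$ to an operator-norm bound on the averaged matrix $\psi_{t_k}$. This makes precise the paper's brief remark that \eqref{psiest} follows from \eqref{est1}, which on its face only controls $\psi_{t_k}$ in the single direction $\phi-\bar\phi$.
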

\begin{proof}
For any two consecutive times $t_k,t_{k+1} \in \Pi[s,t]$, it follows from $C_pC_g \| \bx_{t_k,t_{k+1}}\| =C_pC_g \ltn \bx \rtn_{\tp,[t_k,t_{k+1}]} \leq \lambda \leq \frac{1}{8} $ that the derivative
\[
D\varphi (t_{k+1},t_k,\bx)\phi = Id + Dg(\phi)x_{t_k, t_{k+1}} + \Big(D^2g(\phi)g(\phi) + Dg(\phi)Dg(\phi)\Big)\X_{t_k,t_{k+1}}
\]
satisfies
\[
\min_{\|\xi\|=1} \left\|\Big[D\varphi (t_{k+1},t_k,\bx)\phi\Big]\xi\right\| \geq 1 - C_g\|x_{t_k, t_{k+1}}\| - 2C_g^2\|\X_{t_k,t_{k+1}}\| \geq 1-3\lambda >0;
\]
therefore, it is invertible. Due to the implicit function theorem, $\varphi(t_{k+1},t_k,\bx)$ is an invertible map and $\varphi(t_{k+1},t_k,\bx)^{-1}$ is also a $C^1$ map. As a consequence, $\varphi(t_k,s)$ is invertible for any $t_k \in \Pi[s,t]$ and $\varphi(t_k,s)^{-1}$ is also a $C^1$ map. Moreover, from \eqref{pureRDEest2} it follows that
\begin{equation}\label{est1}
\|\varphi(t_k,s)^{-1} \phi -\varphi(t_k,s)^{-1} \bar{\phi} - (\phi-\bar{\phi})\| \leq 4 \lambda \|\phi-\bar{\phi}\|. 
\end{equation}
Applying the Lagrange mean value theorem to \eqref{est1}, we obtain
\begin{equation*}
    \varphi(t_k,s)^{-1} \phi -\varphi(t_k,s)^{-1} \bar{\phi} = \int_0^1 D\varphi(t_k,s)^{-1}\Big(\bar{\phi}+\theta(\phi-\bar{\phi})\Big)(\phi-\bar{\phi}) d\theta = \Big[I +\psi_{t_k}\Big](\phi-\bar{\phi})
\end{equation*}
which shows \eqref{est2}, where $\psi_{t_k}$ satisfies \eqref{psiest} due to \eqref{est1}.
\end{proof}
From now on, we choose $\lambda$ and $M$ so that
\begin{equation}\label{Mest}
  \lambda \leq \frac{1}{8} \quad \text{and}\quad  4\lambda + \frac{1+4\lambda}{M} \leq \lambda_0.
\end{equation}
\begin{proposition}\label{case1}
    Under the assumptions (${\textbf H}_{V}$), (${\textbf H}_{g}$) and \eqref{Mest}, assume that $s, t \in \Pi, s < t$ are two discrete times satisfying \eqref{lambdathreshold}. Then the following estimate holds
\begin{equation}\label{Vest4}
    V(y_t) +\bar{C}\leq \Big[V(y_s)+\bar{C}\Big] e^{\delta (t-s)} + 4L_V \lambda.
\end{equation}
where
\begin{equation}\label{Cbar}
    \bar{C} :=\Big(\frac{C_{\lambda_0}}{\delta} + L_V \lambda_0\Big).
\end{equation}
\end{proposition}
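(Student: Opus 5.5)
We follow a discrete Doss--Sussmann argument. Write $s=t_l<\dots<t_m=t$ and define the transformed sequence $z_{t_k}:=\varphi(t_k,s,\bx)^{-1}y_{t_k}$. By Lemma \ref{lempureRDE2} this is well defined, with $z_s=y_s$. We also set $\phi_{t_k}:=\varphi(t_k,s,\bx)z_{t_k}$.

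The first step is to derive the one-step recursion for $z$. Put $\hat y_{t_{k+1}}:=\varphi(t_{k+1},t_k,\bx)y_{t_k}$, the pure rough step from $y_{t_k}$. The scheme \eqref{tamedEuler} gives
\[
y_{t_{k+1}}=\hat y_{t_{k+1}}+h_k,\qquad h_k:=\frac{f(y_{t_k})\Delta_k}{1+M\|f(y_{t_k})\|\Delta_k},\quad \Delta_k=t_{k+1}-t_k .
\]
Since $\varphi(t_{k+1},s)^{-1}\hat y_{t_{k+1}}=\varphi(t_k,s)^{-1}y_{t_k}=z_{t_k}$, identity \eqref{est2} yields
\[
z_{t_{k+1}}-z_{t_k}=(I+\psi_{k})h_k,\qquad \|\psi_k\|\le 4\lambda .
\]
Moreover $y_{t_k}=\varphi(t_k,s)z_{t_k}=z_{t_k}+\eta_{t_k}$, where $\eta_{t_k}$ is the increment of the pure rough solution started at $z_{t_k}$. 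By Lemma \ref{lempureRDE}(i), $\|\eta_{t_k}\|\le4\lambda$. The key point of the taming is that $\|h_k\|\le 1/M$, so every increment satisfies $\|z_{t_{k+1}}-z_{t_k}\|\le(1+4\lambda)/M$.

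The second step is the Lyapunov estimate on one step. By the mean value theorem,
\[
V(z_{t_{k+1}})-V(z_{t_k})=\langle\nabla V(\xi_k),(I+\psi_k)h_k\rangle,
\]
with $\xi_k$ on the segment between $z_{t_k}$ and $z_{t_{k+1}}$. Writing $y_{t_k}=\xi_k+\eta'_k$ gives $\|\eta'_k\|\le4\lambda+(1+4\lambda)/M\le\lambda_0$ by \eqref{Mest}; this is exactly why \eqref{Mest} has that form. Since $h_k$ is a nonnegative scalar multiple $c_k\Delta_k f(y_{t_k})$ of $f(y_{t_k})$, with $c_k\in(0,1]$, we can apply \eqref{gradientnegativity} at the point $\xi_k$:
\[
V(z_{t_{k+1}})-V(z_{t_k})\le c_k\Delta_k\bigl(C_{\lambda_0}+\delta V(\xi_k)\bigr).
\]
We then bound $V(\xi_k)\le V(z_{t_k})+L_V\|\xi_k-z_{t_k}\|$. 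The factor $c_k\le1$ is handled only if the bracket is nonnegative; otherwise we replace it by $\max(\cdot,0)$, which the affine bound accommodates because $V\ge0$ and $C_{\lambda_0},\delta>0$.

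The main obstacle is closing this into the clean form \eqref{Vest4} with the constant $\bar C$. We would try to show
\[
V(z_{t_{k+1}})+\bar C\le\bigl(V(z_{t_k})+\bar C\bigr)(1+\delta\Delta_k)\le\bigl(V(z_{t_k})+\bar C\bigr)e^{\delta\Delta_k}.
\]
Here the term $L_V\lambda_0\,\delta\Delta_k$ that arises from $V(\xi_k)-V(z_{t_k})$ is absorbed through the $L_V\lambda_0$ part of $\bar C$, and $C_{\lambda_0}\Delta_k$ is absorbed through the $C_{\lambda_0}/\delta$ part. Iterating from $k=l$ to $m-1$ gives $V(z_t)+\bar C\le(V(y_s)+\bar C)e^{\delta(t-s)}$.

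The final step returns to $y$. Since $y_t=z_t+\eta_t$ with $\|\eta_t\|\le4\lambda$, the Lipschitz bound \eqref{lipschitzV} gives $V(y_t)\le V(z_t)+4L_V\lambda$. Combining this with the iterated estimate yields \eqref{Vest4}.
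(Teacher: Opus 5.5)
Your proposal is correct and matches the paper's proof step for step: the discrete Doss--Sussmann change of variables $y_{t_k}=z_{t_k}+\eta_{t_k}$, the increment identity $\Delta z_k=(I+\psi)h_k$ from \eqref{est2}, and the taming bound $\|\Delta z_k\|\le(1+4\lambda)/M$, which via \eqref{Mest} makes the perturbation $\eta_{t_k}-\theta^*\Delta z_k$ admissible in \eqref{gradientnegativity}; then the one-step recursion, induction, and the final $4L_V\lambda$ correction. The step you call the main obstacle is just the identity $C_{\lambda_0}+\delta L_V\lambda_0+\delta V(z_{t_k})=\delta\bigl(\bar C+V(z_{t_k})\bigr)$, and your $\max(\cdot,0)$ caveat is unnecessary because $C_{\lambda_0},\delta>0$ and $V\ge 0$ already make the bracket nonnegative.
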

\begin{proof}
  We introduce the discrete Doss-Sussmann transformation 
\begin{equation}\label{DS1}
y_{t_k} = \varphi(t_k,s,\bx)z_{t_k} = z_{t_k} + \eta_{t_k},\quad \forall s \leq t_k \leq t.      
\end{equation}
Then $z_{t_k} = \varphi(t_k,s,\bx)^{-1}y_{t_k}$, in particular $z_s = y_s$. Due to \eqref{pureRDEest1},
\begin{equation}\label{etaest}
\eta_s = 0;\quad \|\eta_{t_k}\| \leq 4\lambda \quad \forall s \leq t_k \leq t.
\end{equation}
Replacing \eqref{DS1} into \eqref{tamedEuler}, we obtain
\begin{eqnarray*}
\varphi(t_{k+1},s,\bx)z_{t_{k+1}} &=& \varphi(t_{k+1},t_k,\bx) \circ \varphi(t_k,s,\bx)z_{t_k} + \frac{f(\varphi(t_k,s,\bx)z_{t_k}) (t_{k+1}-t_k)}{1+ M\|f(\varphi(t_k,s,\bx)z_{t_k})\| (t_{k+1}-t_k)}\\
&=& \varphi(t_{k+1},s,\bx)z_{t_k} + \frac{f(z_{t_k}+\eta_{t_k}) (t_{k+1}-t_k)}{1+ M\|f(z_{t_k}+\eta_{t_k})\| (t_{k+1}-t_k)}.    
\end{eqnarray*}
Taking into account the invertibility of $\varphi(t_{k+1},s,\bx)$, we obtain
\begin{equation}\label{transformed1}
    z_{t_{k+1}} = \varphi(t_{k+1},s,\bx)^{-1}\Big(\varphi(t_{k+1},s,\bx)z_{t_k} + \frac{f(z_{t_k}+\eta_{t_k}) (t_{k+1}-t_k)}{1+ M\|f(z_{t_k}+\eta_{t_k})\| (t_{k+1}-t_k)}\Big).
\end{equation}
Applying \eqref{est2} yields
\begin{eqnarray}\label{transformed2}
  \Delta z_k=  z_{t_{k+1}} -z_{t_k} &=& \varphi(t_{k+1},s,\bx)^{-1}\Big(\varphi(t_{k+1},s,\bx)z_{t_k} + \frac{f(z_{t_k}+\eta_{t_k}) (t_{k+1}-t_k)}{1+ M\|f(z_{t_k}+\eta_{t_k})\| (t_{k+1}-t_k)}\Big) \notag\\
    && - \varphi(t_{k+1},s,\bx)^{-1} \Big(\varphi(t_{k+1},s,\bx)z_{t_k} \Big) \notag\\
    &=& \Big[I +\psi_{t_k}\Big]\Big(\frac{f(z_{t_k}+\eta_{t_k}) (t_{k+1}-t_k)}{1+ M\|f(z_{t_k}+\eta_{t_k})\| (t_{k+1}-t_k)}\Big), \quad \forall s \leq t_k < t.
\end{eqnarray}
Because of \eqref{transformed2} and \eqref{psiest},
\begin{equation}\label{zest}
    \|\Delta z_k\| \leq (1+4\lambda) \frac{\|f(z_{t_k}+\eta_{t_k})\| (t_{k+1}-t_k)}{1+ M\|f(z_{t_k}+\eta_{t_k})\| (t_{k+1}-t_k)} \leq \frac{1+4\lambda}{M}, \quad \forall s \leq t_k <t.
\end{equation}
We would like now to estimate the Lyapunov-typed function $V$ on the transformed variables $z$. From \eqref{transformed2} and the Lagrangian mean value theorem, there exists $\theta^* \in (0,1)$ such that
\begin{eqnarray}\label{Vest1}
    V(z_{t_{k+1}}) &=& V(z_{t_k}) + \Big\langle \nabla V(z_{t_k}+ \theta^* \Delta z_k), \Big[I +\psi_{t_k}\Big]\Big(\frac{f(z_{t_k}+\eta_{t_k}) (t_{k+1}-t_k)}{1+ M\|f(z_{t_k}+\eta_{t_k})\| (t_{k+1}-t_k)}\Big)\Big\rangle \notag\\
    &=& V(z_{t_k}) + \frac{(t_{k+1}-t_k)}{1+ M\|f(z_{t_k}+\eta_{t_k})\| (t_{k+1}-t_k)} \times \notag\\
    &&\times \Big\langle \nabla V(z_{t_k}+ \theta^* \Delta z_k), \Big[I +\psi_{t_k}\Big]f\Big(z_{t_k}+\theta^* \Delta z_k+\eta_{t_k}-\theta^* \Delta z_k\Big) \Big\rangle 
\end{eqnarray}
From \eqref{etaest}, \eqref{zest} and taking into account \eqref{Mest},
\[
\|\eta_{t_k} - \theta^* \Delta z_k\| \leq \|\eta_{t_k}\|+|\theta^*|\|\Delta z_k\|\leq 4 \lambda + \frac{1+4\lambda}{M} \leq \lambda_0, \quad \forall s \leq t_k < t,\theta^* \in (0,1). 
\]
As a result, we apply \eqref{gradientnegativity} and \eqref{lipschitzV} for $z:= z_{t_k}+\theta^* \Delta z_k$ to obtain for all $s \leq t_k <t$ the estimate 
 \allowdisplaybreaks
\begin{eqnarray}\label{Vest2}
    V(z_{t_{k+1}}) &\leq& V(z_{t_k}) + \frac{(t_{k+1}-t_k)}{1+ M\|f(z_{t_k}+\eta_{t_k})\| (t_{k+1}-t_k)} \Big(C_{\lambda_0} +\delta V(z_{t_k}+ \theta^* \Delta z_k)\Big) \notag\\
    &\leq& V(z_{t_k}) + \Big[C_{\lambda_0} + L_V \delta \lambda_0 + \delta V(z_{t_k})\Big] (t_{k+1}-t_k)\notag\\
    &\leq& \Big[\bar{C}+V(z_{t_k})\Big] \Big[1+\delta (t_{k+1}-t_k)\Big] - \bar{C}\notag\\
    &\leq& \Big[\bar{C}+V(z_{t_k})\Big] e^{\delta (t_{k+1}-t_k)} - \bar{C}.
\end{eqnarray}
By induction, it is easy to show that from \eqref{Vest2} that
\begin{equation}\label{Vest3}
    V(z_{t_k}) +\bar{C}\leq \Big[V(z_s)+\bar{C}\Big] e^{\delta (t_k-s)},\quad \forall s \leq t_k \leq t.
\end{equation}
Hence, by \eqref{lipschitzV} and \eqref{etaest}, we obtain
\begin{equation*}
      V(y_{t_k})  \leq V(z_{t_k}) + 4L_V\lambda\leq \Big[V(y_s)+\bar{C}\Big] e^{\delta (t_k-s)}-\bar{C} + 4L_V \lambda ,\quad \forall s \leq t_k \leq t.
  \end{equation*}
which proves \eqref{Vest4}.\\
\end{proof}
\begin{proposition}\label{case2}
    Under assumptions (${\textbf H}_{V}$), (${\textbf H}_{g}$), assume $s,t \in \Pi, s <t$ are two consecutive times in $\Pi$. Then the following estimate holds
\begin{equation}\label{Vest5}
    V(y_t) +\bar{C} \leq [V(y_s)+\bar{C}]e^{\delta(t-s)}+  L_V\Big(C_g \| x_{s,t}\| + C_g^2 \| \X_{s,t}\|^2\Big).
\end{equation}
\end{proposition}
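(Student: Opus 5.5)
Since $s=t_k$ and $t=t_{k+1}$ are consecutive, the estimate reduces to a single step of \eqref{tamedEuler}, with no Doss--Sussmann transformation. We split the step into a drift part and a rough part. Set
\[
z := y_s + \frac{f(y_s)(t-s)}{1+M\|f(y_s)\|(t-s)},
\]
so that
\[
y_t = z + g(y_s)x_{s,t} + Dg(y_s)g(y_s)\X_{s,t}.
\]
By \eqref{lipschitzV} and the mean value theorem,
\[
V(y_t) \leq V(z) + L_V\Big(\|g(y_s)\|\|x_{s,t}\| + \|Dg(y_s)\|\|g(y_s)\|\|\X_{s,t}\|\Big) \leq V(z) + L_V\Big(C_g\|x_{s,t}\| + C_g^2\|\X_{s,t}\|\Big),
\]
using \eqref{gcond.new}. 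This yields the noise term in \eqref{Vest5}. One should check the exponent on $\|\X_{s,t}\|$ in the statement: the direct bound gives $\|\X_{s,t}\|$, not its square. Presumably the statement intends the first power, or both terms are read as $p$-variation quantities.

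For the drift part we repeat the argument of \eqref{Vest1}--\eqref{Vest2} with $\eta=0$ and $\psi=0$. Write $\Delta := z-y_s$. The taming gives $\|\Delta\|\leq 1/M$, and by \eqref{Mest} we have $1/M\leq \lambda_0$. The mean value theorem provides $\theta^*\in(0,1)$ with
\[
V(z) = V(y_s) + \frac{(t-s)}{1+M\|f(y_s)\|(t-s)}\Big\langle \nabla V(y_s+\theta^*\Delta), f\big((y_s+\theta^*\Delta) - \theta^*\Delta\big)\Big\rangle .
\]
Since $\|\theta^*\Delta\|\leq \lambda_0$, we may apply \eqref{gradientnegativity} at the point $y_s+\theta^*\Delta$ with $\psi=0$ and $\eta=-\theta^*\Delta$. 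This bounds the inner product by
\[
C_{\lambda_0} + \delta V(y_s+\theta^*\Delta) \leq C_{\lambda_0} + \delta L_V\lambda_0 + \delta V(y_s).
\]
The prefactor lies in $[0,t-s]$. We bound it by $t-s$, which is legitimate provided the bracket is nonnegative; this holds because $V\geq 0$ and $C_{\lambda_0},\delta>0$. Exactly as in \eqref{Vest2}, this gives
\[
V(z)+\bar C \leq \big(V(y_s)+\bar C\big)\big(1+\delta(t-s)\big) \leq \big(V(y_s)+\bar C\big)e^{\delta(t-s)}.
\]

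Combining the two parts gives \eqref{Vest5}. The main point needing care is the placement of the noise step. It must come after the drift step, so that the Lyapunov inequality is applied at an unperturbed point and only the Lipschitz bound \eqref{lipschitzV} is needed for the noise. This matters because here no smallness of $\bx_{s,t}$ is assumed, so the increment cannot be absorbed into a perturbation of size $\lambda_0$. The other delicate point is the sign bookkeeping when replacing the tamed prefactor by $t-s$. Beyond that, the only obstacle is reconciling the exponent of $\|\X_{s,t}\|$ with the statement.
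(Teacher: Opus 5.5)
Your proof is correct and follows the paper's argument essentially line by line. You split off the noise increment via \eqref{lipschitzV}, apply the mean value theorem to the tamed drift step, invoke \eqref{gradientnegativity} with $\psi=0$ and a shift of size at most $1/M\leq\lambda_0$ (from \eqref{Mest}), and close with $1+\delta(t-s)\leq e^{\delta(t-s)}$; your remark on the exponent is also right, since the paper's own proof yields $C_g^2\|\X_{s,t}\|$ (which is what enters \eqref{Hbx} via $\|\X_{s,t}\|\leq\|\Delta \bx\|^2_{\infty,\Pi[0,t]}$), so the square on $\|\X_{s,t}\|$ in \eqref{Vest5} is a typo.
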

\begin{proof}
    The proof is straightforward from \eqref{lipschitzV} and the tamed equation \eqref{tamedEuler}. Indeed, 
    \begin{eqnarray*}
        V(y_t) &\leq& V\Big(y_s+\frac{f(y_s) (t-s)}{1+ M\|f(y_s)\| (t-s)}\Big) + L_V\Big(  C_g \| x_{s,t}\| + C_g^2 \| \X_{s,t}\|\Big)\\
        &\leq& V(y_s) +\Big\langle \nabla V\Big(y_s+\theta^*\frac{f(y_s) (t-s)}{1+ M\|f(y_s)\| (t-s)}\Big),f(y_s)\Big\rangle\frac{ (t-s)}{1+ M\|f(y_s)\| (t-s)} \\
        &&+ L_V\Big( C_g \| x_{s,t}\| + C_g^2 \| \X_{s,t}\|\Big)\\
        &\leq& V(y_s) + \Big[C_{\lambda_0} + \delta V\Big(y_s+\theta^*\frac{f(y_s) (t-s)}{1+ M\|f(y_s)\| (t-s)}\Big) \Big](t-s)\\
        &&+ L_V\Big(C_g \| x_{s,t}\| + C_g^2 \| \X_{s,t}\|\Big)\\
        &\leq& V(y_s) + \Big(C_{\lambda_0} + \delta V(y_s) + \delta L_V \frac{1}{M} \Big)(t-s)+ L_V\Big(C_g \| x_{s,t}\| + C_g^2 \| \X_{s,t}\|\Big)\\
         &\leq& [V(y_s) +\bar{C}]\Big[1+\delta (t-s)\Big] -\bar{C}+ L_V\Big(C_g \| x_{s,t}\| + C_g^2 \| \X_{s,t}\|\Big);
    \end{eqnarray*}
    which proves \eqref{Vest5}.\\
\end{proof}

By constructing the discrete stopping times $\Big\{\tau^\Pi_n(\frac{\lambda}{C_pC_g},\bx,\Pi(I))\Big\}$, we can now formulate the first main result of this paper as below. First, we introduce a notation of the maximum increment of $\bx$ on a set $\Pi[s,t]$
\begin{equation}\label{bxdelta}
    \|\Delta \bx\|_{\infty,\Pi[s,t]} := \max \{\|\bx_{t_k,t_{k+1}}\|: s \leq t_k < t\}.
\end{equation}

\begin{theorem}\label{tamedscheme}
    Under assumptions (${\textbf H}_{V}$), (${\textbf H}_{g}$) and \eqref{Mest}, the solution $y$ of the tamed numerical scheme \eqref{tamedEuler} satisfies
    \begin{equation}\label{Vestmain}
        V(y_t) +\bar{C}
        \leq  e^{\delta t} \big[V(y_0) +\bar{C} + H\big(\bx,\Pi[0,t]\big)\big],\quad \forall t\in \Pi
    \end{equation}    
 where
 \begin{equation}\label{Hbx}
 H\big(\bx,\Pi[0,t]\big)= L_V\Big(4\lambda +C_g \|\Delta \bx\|_{\infty,\Pi[0,t]} + C_g^2 \|\Delta \bx\|_{\infty,\Pi[0,t]}^2\Big)N\Big(\frac{\lambda}{C_pC_g},\bx,\Pi[0,t]\Big).
    \end{equation}
\end{theorem}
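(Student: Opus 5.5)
The plan is to cut $\Pi[0,t]$ into consecutive pieces on which either Proposition \ref{case1} or Proposition \ref{case2} applies, and then chain the resulting one-step inequalities. The number of cuts is controlled by $N\big(\frac{\lambda}{C_pC_g},\bx,\Pi[0,t]\big)$.

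First I construct the discrete stopping times greedily. Set $\tau_0=0$. Given $\tau_n$, let $\tau_{n+1}$ be the largest $t_k\in\Pi$, $t_k\le t$, with $C_pC_g\ltn\bx\rtn_{\tp,\Pi[\tau_n,t_k]}\le\lambda$. If this gives $\tau_{n+1}=\tau_n$, which happens when even a single step exceeds the threshold, take $\tau_{n+1}$ to be the next grid point after $\tau_n$. Each resulting interval is then of one of two types:
\begin{itemize}
\item type (a) satisfies \eqref{lambdathreshold}, and Proposition \ref{case1} applies to it;
\item type (b) is a single step of the grid, and Proposition \ref{case2} applies to it.
\end{itemize}
I would take $N$ to be the number of such intervals needed to reach $t$, consistent with the paper's $\tau^\Pi_n$.

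On every piece $[\tau_n,\tau_{n+1}]$, both propositions give an inequality of the form
\[
V(y_{\tau_{n+1}})+\bar C\le \big[V(y_{\tau_n})+\bar C\big]e^{\delta(\tau_{n+1}-\tau_n)}+a_n .
\]
Here $a_n\le L_V\big(4\lambda+C_g\|\Delta\bx\|_{\infty,\Pi[0,t]}+C_g^2\|\Delta\bx\|^2_{\infty,\Pi[0,t]}\big)=:h$. For type (a) we have $a_n=4L_V\lambda$. For type (b), the one-step increment satisfies $\|x_{s,t}\|,\|\X_{s,t}\|\le\|\bx_{s,t}\|\le\|\Delta\bx\|_\infty$, and the quantity $\|\X_{s,t}\|$ appearing in the proof of Proposition \ref{case2} is bounded by $\|\Delta\bx\|_\infty$ or its square. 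I absorb it by keeping both the linear and the quadratic terms in $h$; this takes care of the square-versus-no-square discrepancy between the statement and the proof of Proposition \ref{case2}.

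Iterating over $n=0,\dots,N-1$ gives
\[
V(y_t)+\bar C\le e^{\delta t}\big[V(y_0)+\bar C\big]+\sum_{n} a_n e^{\delta(t-\tau_{n+1})}.
\]
Since $\delta>0$, each exponential factor is at most $e^{\delta t}$. The sum is therefore at most $e^{\delta t}Nh=e^{\delta t}H(\bx,\Pi[0,t])$, which is \eqref{Vestmain}. If $t$ itself is not a stopping time, I include the final partial interval $[\tau_n,t]$; it still satisfies \eqref{lambdathreshold}, so it is covered by Proposition \ref{case1} and counted in $N$.

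The main obstacle is the bookkeeping in the construction of the stopping times and its match with the definition of $N$ from \cite{ducjost25}. In particular, greedy maximal intervals together with forced single steps must produce at most $N$ pieces. I would argue by superadditivity of $\ltn\bx\rtn^p_{\tp}$: any two consecutive greedy pieces together exceed the threshold. Alternatively, I would define $N$ directly as the count produced by this algorithm, which is what the statement presumably intends.
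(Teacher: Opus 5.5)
Your proposal is correct and follows the paper's proof: the same discrete stopping times $\tau^\Pi_n\big(\frac{\lambda}{C_pC_g},\bx,\Pi[0,t]\big)$, with Proposition \ref{case1} on threshold pieces and Proposition \ref{case2} on single oversized steps. The induction you write out, bounding each $e^{\delta(t-\tau_{n+1})}$ by $e^{\delta t}$, is exactly the argument the paper defers to \cite[Theorem 9]{ducjost25}. One small correction: under the paper's $p$-variation convention with $q=p/2$, a single step gives $\|\X_{s,t}\|\le\|\bx_{s,t}\|^2$ (not $\|\X_{s,t}\|\le\|\bx_{s,t}\|$), and this is precisely why the term $C_g^2\|\X_{s,t}\|$ from the proof of Proposition \ref{case2} is absorbed by the quadratic term $C_g^2\|\Delta\bx\|^2_{\infty,\Pi[0,t]}$ in $H$.
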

\begin{proof}
  A direct consequence of the estimates \eqref{Vest4} in Propositions \ref{case1} and \eqref{Vest5} in Proposition \ref{case2} shows that: for a sequence of stopping times $\Big\{\tau^\Pi_n(\frac{\lambda}{C_pC_g},\bx,\Pi([0,t]))\Big\}$, we obtain the overall estimate
  \[
  V(y_{\tau^\Pi_{n+1}}) +\bar{C} \leq e^{\delta(\tau^\Pi_{n+1}-\tau^\Pi_n)} \Big[V(y_{\tau^\Pi_n}) +\bar{C}\Big] + L_V\big(4\lambda + C_g \|\Delta \bx\|_{\infty,\Pi[0,t]} + C_g^2 \|\Delta \bx\|_{\infty,\Pi[0,t]}^2\big)
  \]
  for any two consecutive stopping times in $\Big\{\tau^\Pi_n(\frac{\lambda}{C_pC_g},\bx,\Pi([0,t]))\Big\}$.
The rest of the arguments are followed by induction, similar to the proof of \cite[Theorem 9]{ducjost25}.\\
\end{proof}
\begin{remark}
i, Similar to the proof of \cite[Theorem 9]{ducjost25}, we can obtain a general estimate of \eqref{Vestmain} corresponding to the general gradient condition \eqref{gradientgeneral} for the strong Lyapunov function $V$.\\

ii, It follows from \eqref{Nestdiscrete} that $H$ in \eqref{Hbx} can be bounded from above by $H \leq \bar{H}$, where 
    \begin{equation}\label{Hupper}
 \bar{H}\big(\bx,[0,t]\big)= 2L_V\Big(4\lambda + C_g \ltn\bx\rtn_{\tp,[0,t]} + C_g^2 \ltn\bx\rtn_{\tp,[0,t]}^2\Big)N\Big(\frac{\lambda}{C_pC_g},\bx,[0,t]\Big)
    \end{equation}
   is independent of $\Pi$.\\
   
iii, It is proved in \cite[Theorem 9]{ducjost25} that the solution $\Phi(t,\bx,y_0)$ of the rough differential equation \eqref{fSDE0} satisfies a similar estimate to \eqref{Vestmain}, i.e.
    \begin{equation}\label{Vestcont}
         V(y_t) +\frac{C_{\lambda_0}}{\delta} 
        \leq  e^{\delta t} \Big[V(y_0) +\frac{C_{\lambda_0}}{\delta} + H^*\big(\bx,[0,t]\big)\Big],\quad \forall t \geq 0
    \end{equation}    
 where
 \begin{equation}\label{H*bx}
 H^*\big(\bx,[0,t]\big)= L_V\lambda N\Big(\frac{\lambda}{C_pC_g},\bx,[0,t]\Big).
    \end{equation}
 Then, it follows from \eqref{Vestmain}, \eqref{Vestcont} and \eqref{Vbound} that
    \begin{equation}\label{solest}
    \begin{split}
        \|y_\cdot(\bx,y_0)\|_{\infty,\Pi[0,T]} \leq \quad & \alpha^{-1}\Big(e^{\delta T} \Big[\beta(\|y_0\|) +\frac{C_{\lambda_0}}{\delta} + L_V \lambda_0 + \bar{H}\big(\bx,[0,T]\big)\Big] \Big);\\
        \|\Phi(\cdot,\bx)y_0\|_{\infty,[0,T]} \leq \quad & \alpha^{-1}\Big(e^{\delta T} \Big[\beta(\|y_0\|) +\frac{C_{\lambda_0}}{\delta} + H^*\big(\bx,[0,T]\big)\Big] \Big).
        \end{split}
    \end{equation}
    
\end{remark}
\begin{theorem}\label{tameapprox}
    i, Under assumptions (${\textbf H}_{V}$), (${\textbf H}_{g}$) and \eqref{Mest}, the tamed numerical scheme \eqref{tamedEuler} approximates the solution $\Phi(\cdot,\bx)y_0$ of the rough differential equation \eqref{fSDE0} in the pathwise sense, i.e. for any $T>0$, there exists a random variable $C(T,\bx,y_0)$ (not necessarily integrable) such that for sufficiently small $|\Pi|$ (dependent on $\bx(\omega)$),
    \begin{equation}\label{LPi}
    \mathbf{L}(T,\Pi,\bx,\|y_0\|) :=     \|y_\cdot(\bx,y_0) - \Phi(\cdot,\bx)y_0\|_{\infty,\Pi[0,T]} \leq C(T,\bx,\|y_0\|) |\Pi|^{3\nu -1}.
    \end{equation}
    
    ii, Under assumptions (${\textbf H}_{V}$) with $\alpha^{-1},\beta \in \mathcal{K}^{\rm poly}_\infty$, (${\textbf H}_{g}$) and \eqref{Mest},
    the tamed numerical scheme \eqref{tamedEuler} with regular grid $\Pi^\Delta$ approximates the solution $\Phi(\cdot,\bx)y_0$ of the rough differential equation \eqref{fSDE0} in the sense that: for any $\rho \geq 1$ and $T>0$, 
     \begin{equation}\label{LdeltaExpec}
     \lim \limits_{\Delta \to 0}   \Big\| \mathbf{L}(T,\Delta,\bx(\cdot),\|y_0\|)\Big\|_{\mathcal{L}^\rho} =0.
    \end{equation}
\end{theorem}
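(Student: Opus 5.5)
The plan is to reduce part i to the known pathwise convergence of the (untamed) Euler scheme for locally Lipschitz drifts. The a priori bounds \eqref{solest} serve as the localization device. Fix $T>0$ and set
\[
R := \alpha^{-1}\Big(e^{\delta T}\Big[\beta(\|y_0\|)+\frac{C_{\lambda_0}}{\delta}+L_V\lambda_0+\bar H\big(\bx,[0,T]\big)+H^*\big(\bx,[0,T]\big)\Big]\Big)+1 .
\]
By \eqref{solest}, both $y_\cdot(\bx,y_0)$ on $\Pi[0,T]$ and $\Phi(\cdot,\bx)y_0$ on $[0,T]$ stay in the ball $B_R$. The radius $R$ does not depend on $\Pi$, because $\bar H$ in \eqref{Hupper} is $\Pi$-independent. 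We replace $f$ by a globally Lipschitz, bounded cut-off $f_R$ that coincides with $f$ on $B_{R}$. Neither the continuous solution nor the tamed scheme notices the cut-off. The local Lipschitz constant $L_R$ and the bound $\|f\|_{\infty,B_R}$ then enter the final constant.

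Next we split the error at each grid time $t_k$ as $y_{t_k}-\Phi(t_k,\bx)y_0$. It is compared with the Euler (Milstein-type) step \eqref{Euler} applied along the exact solution, giving a one-step local error that we sum via a discrete Gronwall or sewing argument. We write the tamed drift increment as
\[
\frac{f(y)h}{1+M\|f(y)\|h} = f(y)h - \frac{M\|f(y)\|\,\|f(y)\| h^2}{1+M\|f(y)\|h}\cdot\frac{f(y)}{\|f(y)\|},
\]
so the taming defect per step is at most $M\|f\|_{\infty,B_R}^2 h^2$. Summed over $[0,T]$, this contributes $O(|\Pi|)\le O(|\Pi|^{3\nu-1})$, since $3\nu-1\le 1/2<1$. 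The remaining part is exactly the untamed Euler scheme with the globally Lipschitz drift $f_R$. For that scheme, \cite{congduchong23} (and the pathwise result of \cite{duckloeden}) gives a local error of order $h^{3\nu}$ from the rough remainder (third level terms controlled by $C_g$ and $\ltn\bx\rtn_{\tp}$). The stability of the discrete rough flow is provided by Lemma \ref{lempureRDE} and Lemma \ref{lempureRDE2}, applied on the greedy stopping-time intervals. The number of these intervals is $N(\frac{\lambda}{C_pC_g},\bx,[0,T])$, which is again $\Pi$-independent thanks to \eqref{Nestdiscrete}. Combining, we get $\mathbf L\le C(T,\bx,\|y_0\|)|\Pi|^{3\nu-1}$, where $C$ grows like $\exp\big(c\,L_R N(\cdot)\big)$ times powers of $\ltn\bx\rtn_{\tp,[0,T]}$. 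We require $|\Pi|$ small only so that every step satisfies $C_pC_g\|\bx_{t_k,t_{k+1}}\|\le\lambda$.

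For part ii we do not try to make $C(T,\bx,\|y_0\|)$ integrable, which would require global Lipschitz $f$. Instead we use convergence in probability plus uniform integrability. By part i, $\mathbf L(T,\Delta,\bx,\|y_0\|)\to0$ almost surely as $\Delta\to0$. By \eqref{solest},
\[
\mathbf L\le \|y\|_{\infty,\Pi[0,T]}+\|\Phi y_0\|_{\infty,[0,T]}\le 2\alpha^{-1}\big(e^{\delta T}[\beta(\|y_0\|)+c+\bar H+H^*]\big) =: G(\bx),
\]
and $G$ is independent of $\Delta$. Since $\alpha^{-1},\beta\in\mathcal K^{\rm poly}_\infty$, we have $G\lesssim 1+\|y_0\|^{\rho'}+(\bar H+H^*)^{\rho_{\alpha^{-1}}}$. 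The factor $\bar H$ is polynomial in $\ltn\bx\rtn_{\tp,[0,T]}$ times $N(\frac{\lambda}{C_pC_g},\bx,[0,T])$. Under (${\textbf H}_X$), $\ltn\bx\rtn_{\tp,[0,T]}$ has all moments via Kolmogorov/Garsia–Rodemich–Rumsey, and $N$ is integrable to any power; for Gaussian-type rough paths this is the Cass–Litterer–Lyons tail estimate, as used in \cite{ducjost25}. Hölder's inequality then gives $G\in\mathcal L^{\rho}$ for every $\rho$, and dominated convergence yields \eqref{LdeltaExpec}.

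The main obstacle is part i, specifically making sure that the localization is legitimate uniformly in $\Pi$. The point to check first is that \eqref{solest} holds with a bound independent of the grid, so that the cut-off radius $R$ can be fixed before letting $|\Pi|\to0$. This is exactly where the discrete bound \eqref{Nestdiscrete} on $N$ over $\Pi$ versus $[0,T]$ enters. A secondary check for part ii is the moment bound on $N$ under only (${\textbf H}_X$). If that is not available in the generality stated, we would restrict to the fBm-type examples, where it is known.
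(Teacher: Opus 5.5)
Your proposal matches the paper's proof. Both localize via the $\Pi$-independent bound \eqref{solest}, cut off to $f_R$, and rerun the Euler--Milstein analysis of \cite{duckloeden} with an extra per-step taming defect of order $h^2$; your bound $M\|f\|_{\infty,B_R}^2h^2$ is the correct one. For (ii), both combine almost sure convergence with domination by the right-hand sides of \eqref{solest} and dominated convergence.

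Two small remarks. First, the paper takes local errors along the numerical trajectory and propagates them through the Lipschitz constant of the exact flow $\Phi^R(t_m,t_{k+1})$; this spares you the stability of the discrete scheme with drift, which Lemmas \ref{lempureRDE}--\ref{lempureRDE2} do not give since they only treat the pure rough difference equation. Second, your worry about moments of $N$ is unnecessary: \eqref{Nest} bounds $N$ by $1+\gamma^{-p}\ltn\bx\rtn^p_{\tp,[0,T]}$, so no Cass--Litterer--Lyons estimate or restriction to fBm is needed.
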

\begin{proof}
i, The proof follows \cite[Theorem 3.1]{duckloeden} line by line, except for a small modification. Namely, we first use \eqref{solest} and the cut-off technique to work with the cut-off drift $f_R$ which is bounded by $\|f_R\|_\infty$ in the closed ball $B(0,R)$ where $R = R(T,\bx,y_0)$. Then we study the tamed numerical method \eqref{tamedEuler} for coefficient functions $f_R$ and $g$ and estimate the difference between the solution $\Phi^R(\cdot,\bx)y_0$ of the continuous rough differential equation \eqref{fSDE0} with $f_R$ and the solution $y^R_\cdot(\bx,y_0)$ of the tamed numerical scheme \eqref{tamedEuler} with $f_R$, using the triangle inequality
\begin{eqnarray*}
\|\Phi^R(t_m, 0)y_0 - y^R_{t_m}(\bx,y_0)\| &\leq& \sum_{k=0}^{m-1} \|\Phi^R(t_m, t_k)y^R_{t_k} - \Phi^R(t_m,t_{k+1},y^R_{t_{k+1}})\| \\
&\leq& C(R,\bx,T)\sum_{k=0}^{m-1} \|\Phi^R(t_{k+1}, t_k)y^R_{t_k} - y^R_{t_{k+1}}\|,    
\end{eqnarray*}
for a generic constant $C(T,\bx, R)$. The crucial estimate that is different from \cite[Theorem 3.1]{duckloeden} is 
 \allowdisplaybreaks
\begin{eqnarray*}
&& \|\Phi^R(t_{k+1}, t_k)y^R_{t_k} - y^R_{t_{k+1}}\| \\
&\leq& \int_{t_k}^{t_{k+1}} \Big \|f_R(\Phi^R(u,t_k)y^R_{t_k}) - \frac{f_R(y^R_{t_k})}{1+M\|f_R(y^R_{t_k})\|(t_{k+1}-t_k)} \Big \|du \\
&& + \Big \|\int_{t_k}^{t_{k+1}} g(\Phi^R(u,t_k)y^R_{t_k}) dx_u - g(y^R_{t_k})x_{t_k,t_{k+1}} - Dg(y^R_{t_k})g(y^R_{t_k})\X_{t_k,t_{k+1}}\Big \|\\
&\leq& \int_{t_k}^{t_{k+1}} \Big \|f_R(\Phi^R(u,t_k)y^R_{t_k}) - f_R(y^R_{t_k}) \Big \|du  + \frac{M\|f_R(y^R_{t_k})\|(t_{k+1}-t_k)^2}{1+M\|f_R(y^R_{t_k})\|(t_{k+1}-t_k)}\\
&& + C(T,\bx,R) (t_{k+1}-t_k)^{3\nu}\\
&\leq& L(f_R)(t_{k+1}-t_k) \ltn \Phi(\cdot,t_k)y^R_{t_k}\rtn_{\tp,[t_k,t_{k+1}]}  + M\|f_R\|_\infty (t_{k+1}-t_k)^2\\
&&+ C(T,\bx,R) (t_{k+1}-t_k)^{3\nu}\\
&\leq& C(T,\bx,R) \Big[(t_{k+1}-t_k)^{1+\nu} +(t_{k+1}-t_k)^2+ (t_{k+1}-t_k)^{3\nu}\Big] \leq C(T,\bx,R)(t_{k+1}-t_k)^{3\nu}
\end{eqnarray*}
for a generic constant $C(T,\bx,R)$. Using the same arguments as in \cite[Theorem 3.1]{duckloeden}, we obtain \eqref{LPi}.\\

ii,  For the regular grid $\Pi^\Delta$, it follows from \eqref{LPi} that 
\begin{equation}\label{Ldeltaest}
    \mathbf{L}(T,\Delta,\bx,\|y_0\|) :=     \|y_\cdot(\bx,y_0) - \Phi(\cdot,\bx)y_0\|_{\infty,\Pi^\Delta[0,T]} \leq C(T,\bx,\|y_0\|) \Delta^{3\nu -1},
    \end{equation}
for $\Delta$ sufficiently small (dependent on $\bx(\omega)$) which then proves that
    \begin{equation}\label{Ldelta}
     \lim \limits_{\Delta \to 0}   \mathbf{L}(T,\Delta,\bx,\|y_0\|)^\rho =0,\quad \text{almost surely in } \bx(\omega).
    \end{equation}
    On the other hand, the estimates \eqref{solest} and the assumption $\alpha^{-1},\beta \in \mathcal{K}^{\rm poly}_\infty$ then show that the right hand sides of \eqref{solest} are in $\mathcal{L}^\rho$. As a result,
    \begin{eqnarray}\label{Ldeltarho}
        \mathbf{L}(T,\Delta,\bx,\|y_0\|) &\leq& \|y_\cdot(\bx,y_0)\|_{\infty,\Pi[0,T]} + \|\Phi(\cdot,\bx)y_0\|_{\infty,[0,T]} \notag\\
        &\leq& \alpha^{-1}\Big(e^{\delta T} \Big[\beta(\|y_0\|) +\frac{C_{\lambda_0}}{\delta} + L_V \lambda_0 + \bar{H}\big(\bx,[0,T]\big)\Big] \Big) \notag\\
        &&+\alpha^{-1}\Big(e^{\delta T} \Big[\beta(\|y_0\|) +\frac{C_{\lambda_0}}{\delta} + H^*\big(\bx,[0,T]\big)\Big] \Big)\in \mathcal{L}^\rho.
    \end{eqnarray}
    Hence, by the Lebesgue dominated convergence theorem, \eqref{LdeltaExpec} is followed from \eqref{Ldelta} and \eqref{Ldeltarho}.
    
\end{proof}

\begin{remark}\label{addnoisetame}
    i, In the case of additive noise $g \equiv \bar{g}$ for a constant matrix $\bar{g}$, the conclusions of Theorem \ref{tameapprox} still hold under the assumptions (${\textbf H}_{V}$) with additive condition \eqref{addgradientnegativity}, and (${\textbf H}_{g}$), \eqref{Mest}. The proof goes line by line with the proof of Theorem \ref{tameapprox} by assigning $\psi \equiv 0$, hence it will be omitted here.

    ii, The $L^1$ convergence in \eqref{LdeltaExpec} is proved in an indirect way using Lebesgue dominated convergence theorem, hence it does not guarantee the convergence rate of $|\Pi|^{3\nu-1}$ as in the pathwise estimate \eqref{Ldeltaest}.
\end{remark}

\begin{remark}\label{detertame}
    Condition \eqref{addgradientnegativity} is assumed to be applicable for the deterministic (unperturbed) system $\dot{y} = f(y)$ by setting $g \equiv 0$. In fact, we can use a simplified version that
    \begin{equation}\label{deterV}
         \langle \nabla V(z), f(z)\rangle \leq C_0 + \delta V(z),\quad \forall z \in \R^d.
    \end{equation}
Consider the deterministic tamed numerical scheme on the regular grid $\Pi[0,T] = \{t_k: k =0\dots N\}$
\begin{equation}\label{tamedEulerdeter}
				z_0 \in \R^d,\quad
				z_{k+1} = z_k + \frac{f(z_k) \Delta_k}{1+ M\|f(z_k)\| \Delta_k},\quad k = 0 \dots N.
        \end{equation}
    Then a direct computation shows that
     \allowdisplaybreaks
    \begin{eqnarray*}
        V(z_{k+1}) &=& V(z_k) + \Big\langle \nabla V\Big(z_k + \theta^* \frac{f(z_k) \Delta_k}{1+ M\|f(z_k)\| \Delta_k}\Big),\frac{f(z_k) \Delta_k}{1+ M\|f(z_k)\| \Delta_k}\Big\rangle\\
        &\leq& V(z_k) + \frac{L_V}{1+ M\|f(z_k)\| \Delta_k}  \Big\|f\Big(z_k+ \theta^* \frac{f(z_k) \Delta_k}{1+ M\|f(z_k)\| \Delta_k}\Big) - f(z_k)\Big\|\Delta_k\\
        &&+ \frac{1}{1+ M\|f(z_k)\| \Delta_k} \Big\langle \nabla V\Big(z_k + \theta^* \frac{f(z_k) \Delta_k}{1+ M\|f(z_k)\| \Delta_k}\Big),f\Big(z_k+ \theta^* \frac{f(z_k) \Delta_k}{1+ M\|f(z_k)\| \Delta_k}\Big) \Big\rangle\Delta_k\\
&\leq& V(z_k) + \frac{L_V}{1+ M\|f(z_k)\| \Delta_k}  \|Df\|_{\infty,B(0,\|z_k\|+\frac{1}{M})} \frac{\|f(z_k)\| \Delta_k}{1+ M\|f(z_k)\| \Delta_k}\Delta_k\\
&&+ \frac{1}{1+ M\|f(z_k)\| \Delta_k} \Big[C_0+\delta V\Big(z_k + \theta^* \frac{f(z_k) \Delta_k}{1+ M\|f(z_k)\| }\Big)\Big]\Delta_k \\
&\leq& V(z_k) + L_V \|Df\|_{\infty,B(0,\|z_k\|+\frac{1}{M})} \|f(z_k)\|\Delta_k^2+ \Big(C_0+\frac{L_V\delta}{M} + \delta V(z_k)\Big)\Delta_k\\
&\leq& V(z_k)(1+\delta \Delta_k) + \Big(L_V+ C_0+\frac{L_V\delta}{M} \Big)\Delta_k, \quad k =0,\dots,N-1
    \end{eqnarray*}
    provided that 
 \begin{equation}\label{stepsize1}
   \max_{k =0\dots N}  \|Df\|_{\infty,B(0,\|z_k\|+\frac{1}{M})} \|f(z_k)\| |\Pi[0,T]| \leq 1.
 \end{equation}
As a result, it follows from induction that
\[
V(z_k) +\frac{1}{\delta}\Big(L_V+ C_0+\frac{L_V\delta}{M} \Big)  \leq e^{\delta t_k} \Bigg( V(z_0) + \frac{1}{\delta}\Big(L_V+ C_0+\frac{L_V\delta}{M} \Big)\Bigg),\quad k =0\dots N.
\]
Hence 
\begin{equation}\label{zmax}
    \max_{k =0\dots N}\|z_k\| \leq \alpha^{-1} \Bigg(e^{\delta T} \Bigg[ V(z_0) + \frac{1}{\delta}\Big(L_V+ C_0+\frac{L_V\delta}{M} \Big) \Bigg] \Bigg) =: \bar{M}(V(z_0))
\end{equation}
provided that \eqref{stepsize1} is satisfied by choosing $|\Pi[0,T]|$ such that
\begin{equation}\label{stepsize2}
    \|Df\|_{\infty,B\big(0,\bar{M}(V(z_0))+\frac{1}{M}\big)} \|f\|_{\infty,B\big(0,\bar{M}(V(z_0))\big)} |\Pi[0,T]| \leq 1.
\end{equation}
Similar to the proof of Theorem \ref{tameapprox} (i), the error estimate of the tamed numerical scheme in \eqref{LPi} has the form
\begin{equation}\label{LPideter}
    \mathbf{L}(T,\Pi,\bx,\|z_0\|) :=     \|z_\cdot(z_0) - \Phi(\cdot)z_0\|_{\infty,\Pi[0,T]} \leq C(T,\|z_0\|) |\Pi|.
    \end{equation}
\end{remark}

\section{Numerical attractors}
Theorem \ref{tamedscheme} and similar arguments in \cite{duckloeden} show that under condition \eqref{Mest}, the tamed numerical system \eqref{tamedEuler} with a regular grid $\Pi^\Delta$ generates a discrete random dynamical system $\Phi_{C_g}^\Delta(t,\bx)$ on $\Pi^\Delta$. In this section, we impose in assumption (${\textbf H}_{V}$) a new condition \eqref{gradientgeneral} by setting 
\begin{equation}\label{deltamain}
\gamma (u) = C_{\lambda_0} - \delta u.    
\end{equation}
Under this new condition, it is proved in \cite[Theorem 15]{ducjost25} that the continuous system \eqref{fSDE0} generates a continuous random dynamical system $\Phi_{C_g}(t,\bx)$ that admits a global random pullback attractor $\cA^0_{C_g}(\omega) \in L^1$. 
We are going to prove a similar result on the existence of the numerical attractor $\cA_{C_g}^\Delta(\omega)$ for the discrete system $\Phi_{C_g}^\Delta$.

\subsection{Deterministic attractors}\label{deterattractor}
Indeed, let us first review the simplest case of the deterministic tamed numerical scheme on the regular grid $\Pi^\Delta$, i.e.
\begin{equation}\label{tamedEulerdeter}
				z_0 \in \R^d,\quad
				z_{k+1} = z_k + \frac{f(z_k) \Delta}{1+ M\|f(z_k)\| \Delta},\quad k \in \N,
        \end{equation}
where we write $z_k$ to indicate $z_{t_k}$ for simplicity. We impose the simplified version that
         \begin{equation}\label{deterVneg}
         \langle \nabla V(z), f(z)\rangle \leq C_0 - \delta V(z),\quad \forall z \in \R^d.
    \end{equation}
Then similar to the previous part, a direct computation shows that
 \allowdisplaybreaks
    \begin{eqnarray*}
        V(z_{k+1}) &=& V(z_k) + \Big\langle \nabla V(z_k + \theta^* \frac{f(z_k) \Delta}{1+ M\|f(z_k)\| \Delta}),\frac{f(z_k) \Delta}{1+ M\|f(z_k)\| \Delta}\Big\rangle\\
        &\leq& V(z_k) + \frac{L_V}{1+ M\|f(z_k)\| \Delta}  \|f\big(z_k+ \theta^* \frac{f(z_k) \Delta}{1+ M\|f(z_k)\| \Delta}\big) - f(z_k)\|\Delta\\
        &&+ \frac{1}{1+ M\|f(z_k)\| \Delta} \Big\langle \nabla V(z_k + \theta^* \frac{f(z_k) \Delta}{1+ M\|f(z_k)\| \Delta}),f\big(z_k+ \theta^* \frac{f(z_k) \Delta}{1+ M\|f(z_k)\| \Delta}\big) \Big\rangle\Delta\\
&\leq& V(z_k) + \frac{L_V}{1+ M\|f(z_k)\| \Delta}  \|Df\|_{\infty,B(0,\|z_k\|+\frac{1}{M})} \frac{\|f(z_k)\| \Delta}{1+ M\|f(z_k)\| \Delta}\Delta\\
&&+ \frac{1}{1+ M\|f(z_k)\| \Delta} \Big(C_0-\delta V\big(z_k + \theta^* \frac{f(z_k) \Delta}{1+ M\|f(z_k)\| }\big)\Big)\Delta \\
&\leq& V(z_k) + L_V\|Df\|_{\infty,B(0,\|z_k\|+\frac{1}{M})} \|f(z_k)\| \Delta^2 + \frac{\Delta}{1+ M\|f(z_k)\| \Delta} \Big(C_0-\delta V(z_k) + \delta L_V \frac{1}{M}\Big) \\
&\leq& V(z_k)\Big(1-\frac{\delta \Delta}{1+ M\|f(z_k)\| \Delta} \Big) + L_V \|Df\|_{\infty,B(0,\|z_k\|+\frac{1}{M})} \|f(z_k)\|\Delta^2+ \Big(C_0+\frac{L_V\delta}{M} \Big)\Delta\\
&\leq& V(z_k)(1-\frac{\delta \Delta}{2}) + \Big(L_V+ C_0+\frac{L_V\delta}{M} \Big)\Delta, \quad k =0\in \N
    \end{eqnarray*}
    provided that 
 \begin{equation}\label{stepsize3}
   \max_{k \in \N}  \Big\{ \|Df\|_{\infty,B(0,\|z_k\|+\frac{1}{M})} \|f(z_k)\| \Delta, \ M\|f(z_k)\| \Delta \Big\}\leq 1.
 \end{equation}
As a result, it follows from induction that
\[
V(z_k) -\frac{1}{\delta}\Big(L_V+ C_0+\frac{L_V\delta}{M} \Big)  \leq e^{-\frac{\delta}{2} k\Delta} \Big[ V(z_0) - \frac{1}{\delta}\Big(L_V+ C_0+\frac{L_V\delta}{M} \Big)\Big],\quad k \in \N.
\]
Hence 
\begin{equation}\label{zmax2}
    \sup_{k \in \N}\|z_k\| \leq \alpha^{-1} \Big( V(z_0) + \frac{1}{\delta}\Big[L_V+ C_0+\frac{L_V\delta}{M} \Big] \Big) =: M^\infty(V(z_0))
\end{equation} 
provided that \eqref{stepsize3} is satisfied by choosing $\Delta = \Delta(\|z_0\|)$ such that
\begin{equation}\label{stepsize4}
   \max \Big\{ \|Df\|_{\infty,B\big(0,M^\infty(V(z_0))+\frac{1}{M}\big)} \|f\|_{\infty,B\big(0,M^\infty(V(z_0))\big)},\ M\|f\|_{\infty,B\big(0,M^\infty(V(z_0))\big)}\Big\}  \Delta \leq 1.
\end{equation}
Therefore, we prove that $\Phi^\Delta$ admits a global numerical attractor $\cA_0^\Delta$ which lies within the compact absorbing set $\cB = B\Big(0,V^{-1}\Big(\frac{1}{\delta}\Big[L_V+ C_0+\frac{L_V\delta}{M} \Big]\Big)\Big)$. In practice, we can start with 
\[
z_0 \in B\Big(0,V^{-1}\Big(\frac{1}{\delta}\Big[L_V+ C_0+\frac{L_V\delta}{M} \Big]+1\Big)\Big);
\]
then $\Delta$ chosen in \eqref{stepsize4} is independent of $z_0$. Due to the uniform boundedness of $\cA_0^\Delta$ inside $\cB$, it is easy (see e.g. \cite{ducjost25}) to prove the semi-continuity of $\cA_0^\Delta$, i.e.
    \begin{equation}\label{numattractor}
        \lim \limits_{\Delta \to 0} d_H(\cA_0^\Delta|\cA^0_0) = 0. 
    \end{equation}

\subsection{Random attractors}
Next, we consider, under condition \eqref{Mest}, the tamed numerical scheme \eqref{tamedEuler} for the regular grid $\Pi^\Delta =\{k\Delta,k\in \N\}$, i.e.
\begin{equation}\label{tamedEulerDelta}
			\begin{split}
				y_0 &\in \R^d,\\
				y_{k+1} &= y_k + \frac{f(y_k) \Delta}{1+ M\|f(y_k)\| \Delta} + g(y_k)\Delta x_k + Dg(y_k)g(y_k)\Delta \X_k,\quad k \in \N,
			\end{split}
        \end{equation}
 where we write in short $y_k = y_{t_k}, \Delta x_k = x_{k\Delta,(k+1)\Delta}, \Delta \X_k = \X_{k\Delta,(k+1)\Delta}$.        
 We also impose another condition: there exist constants $C(f,V)>0,\mu \geq 1$ such that
\begin{equation}\label{fgrowth}
    \|f(y)\| \leq C(f,V) \Big(1+V(y)^\mu\Big),\quad \forall y \in \R^d.
\end{equation}

We collect all the assumptions into the following one.\\

(${\textbf H}_A$): The system \eqref{tamedEulerDelta} is considered under assumptions (${\textbf H}_{V}$) with $\alpha^{-1},\beta \in \mathcal{K}^{\rm poly}_\infty$ and condition \eqref{gradientgeneral} for $\gamma$ in \eqref{deltamain}, (${\textbf H}_{g}$), (${\textbf H}_{X}$), and conditions \eqref{Mest}, \eqref{fgrowth}.\\

We prove the following auxiliary results. 
\begin{proposition}\label{case1neg}
 Under assumption (${\textbf H}_A$), assume that $s, t \in \Pi^\Delta, s < t$ are two discrete times satisfying \eqref{lambdathreshold}. Then the following estimates hold for the discrete scheme \eqref{tamedEulerDelta} 
\begin{equation}\label{Vestneg1}
    V(y_t) -\bar{C}\leq 4L_V \lambda +\begin{cases}\big[V(y_s)-\bar{C}\big] \big(1- \delta \Delta\big)^{\frac{t-s}{\Delta}}  & \text{if}\quad V(y_s)\leq\bar{C};\\
    \big[ V(y_s) - \bar{C}\big] \Big(1- \frac{\delta \Delta}{1+ M C_{f,V,\mu} \big(1+V(y_s)^\mu\big)\Delta} \Big)^{\frac{t-s}{\Delta}} & \text{if}\quad V(y_s)>\bar{C};\end{cases}
\end{equation}    
for a generic constant $ C_{f,V,\mu}$.
\end{proposition}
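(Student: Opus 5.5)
We follow the proof of Proposition \ref{case1}, now with the negative-gradient condition $\gamma(u)=C_{\lambda_0}-\delta u$. On $\Pi^\Delta[s,t]$ we use the discrete Doss--Sussmann transformation $y_{t_k}=\varphi(t_k,s,\bx)z_{t_k}=z_{t_k}+\eta_{t_k}$. Lemma \ref{lempureRDE2} and \eqref{pureRDEest1} give $z_s=y_s$, $\|\eta_{t_k}\|\le 4\lambda$, and $\Delta z_k=[I+\psi_{t_k}]\frac{f(z_{t_k}+\eta_{t_k})\Delta}{1+M\|f(z_{t_k}+\eta_{t_k})\|\Delta}$ with $\|\psi_{t_k}\|\le 4\lambda$. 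Hence $\|\Delta z_k\|\le (1+4\lambda)/M$.

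Next we apply the mean value theorem exactly as in \eqref{Vest1}, with perturbation $\eta_{t_k}-\theta^*\Delta z_k$ of norm at most $\lambda_0$ by \eqref{Mest}. Condition \eqref{gradientgeneral} with \eqref{deltamain}, together with \eqref{lipschitzV}, then yields
\begin{equation*}
V(z_{t_{k+1}})\le V(z_{t_k})+a_k\big(C_{\lambda_0}+\delta L_V\lambda_0-\delta V(z_{t_k})\big),\qquad a_k:=\frac{\Delta}{1+M\|f(z_{t_k}+\eta_{t_k})\|\Delta}\in(0,\Delta].
\end{equation*}
With $\bar C$ as in \eqref{Cbar}, we have $C_{\lambda_0}+\delta L_V\lambda_0\le\delta\bar C$. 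This is the discrete contraction
\begin{equation*}
V(z_{t_{k+1}})-\bar C\le (1-\delta a_k)\big(V(z_{t_k})-\bar C\big),
\end{equation*}
assuming $\delta\Delta<1$ (small step, implicit in the statement).

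We then split into the two cases of the statement and proceed by induction.

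\emph{Case $V(y_s)\le\bar C$.} The sign is preserved: if $V(z_{t_k})-\bar C\le 0$, the right-hand side is $\le 0$. Since $a_k\le\Delta$, for a nonpositive quantity $(1-\delta a_k)u\le(1-\delta\Delta)u$. Iterating over the $(t-s)/\Delta$ steps gives $V(z_t)-\bar C\le (V(y_s)-\bar C)(1-\delta\Delta)^{(t-s)/\Delta}$.

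\emph{Case $V(y_s)>\bar C$.} Once $V(z_{t_k})$ drops below $\bar C$ it stays below, and the bound claimed is positive, so that branch is harmless. While $V(z_{t_k})>\bar C$, the quantity is decreasing, so $V(z_{t_k})\le V(y_s)$ along the trajectory. We need a lower bound on $a_k$. Using \eqref{fgrowth}, $\|f(z_{t_k}+\eta_{t_k})\|\le C(f,V)(1+V(z_{t_k}+\eta_{t_k})^\mu)$, and $V(z_{t_k}+\eta_{t_k})\le V(z_{t_k})+4L_V\lambda\le V(y_s)+L_V$. Since $\mu\ge1$, $(V(y_s)+L_V)^\mu\le 2^{\mu-1}(V(y_s)^\mu+L_V^\mu)$. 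This gives $\|f\|\le C_{f,V,\mu}(1+V(y_s)^\mu)$ for a generic constant. Therefore $a_k\ge \frac{\Delta}{1+MC_{f,V,\mu}(1+V(y_s)^\mu)\Delta}$, and multiplying the contraction factors gives the second branch.

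Finally we return to $y$. Since $V(y_t)\le V(z_t)+L_V\|\eta_t\|\le V(z_t)+4L_V\lambda$, we obtain \eqref{Vestneg1}.

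The main obstacle is the second case. Controlling $a_k$ uniformly needs the monotone decrease of $V(z_{t_k})$, which keeps $V(z_{t_k})\le V(y_s)$. It also needs care that this monotonicity argument uses $V(z)$ rather than $V(y)$, with the $\eta$ shift absorbed into the generic constant.
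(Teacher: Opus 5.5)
Your proposal is correct and follows the paper's proof essentially step by step. The shared steps are the discrete Doss--Sussmann transformation, the mean-value and negative-gradient step giving $V(z_{t_{k+1}})-\bar C\le(1-\delta a_k)(V(z_{t_k})-\bar C)$, the sign split in which the lower bound on $a_k$ via \eqref{fgrowth} uses the monotone decrease of $V(z_{t_k})$, and the return to $y$ through $V(y_t)\le V(z_t)+4L_V\lambda$. One small difference: your bound $C_{\lambda_0}+\delta L_V\lambda_0\le\delta\bar C$, with $\bar C$ taken from \eqref{Cbar}, is the consistent one, because only $\|\Delta z_k\|\le(1+4\lambda)/M\le\lambda_0$ is available, whereas the paper's proof writes $L_V\lambda$ at that point.
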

\begin{proof}
    Similar to Proposition \ref{case1}, we use the Doss-Sussmann transformation $y_k = \varphi(t_k,s,\bx)z_k$ for $t_k \in \Pi[s,t]$ in \eqref{DS1}, to derive the discrete system for $z_k$ as in \eqref{transformed1} and \eqref{transformed2}, where $\eta, \psi$ satisfy conditions \eqref{etaest} and \eqref{psiest}. As a result, estimate \eqref{Vest1} holds. Under condition \eqref{gradientgeneral} for $\gamma (u) = C_{\lambda_0}-\delta u$, estimate \eqref{Vest2} has the form
    \begin{eqnarray*}
         V(z_{k+1}) &\leq& V(z_k) + \frac{\Delta}{1+ M\|f(z_k+\eta_k)\| \Delta} \Big(C_{\lambda_0} -\delta V(z_k+ \theta^* \Delta z_k)\Big)\\
         &\leq& V(z_k) + \frac{\big[C_{\lambda_0} + L_V \delta \lambda - \delta V(z_k)\big] \Delta}{1+ M\|f(z_k+\eta_k)\| \Delta},\quad \forall s\leq t_k <t.
    \end{eqnarray*}
    As a result,
    \begin{equation}\label{Vestneg2}
        V(z_{k+1}) - \bar{C}\leq \big[ V(z_k) - \bar{C}\big] \Big(1- \frac{\delta \Delta}{1+ M\|f(z_k+\eta_k)\| \Delta} \Big),\quad \forall s\leq t_k <t.
    \end{equation}
    Fix the step size $\Delta$ such that $1 > \delta \Delta$ and write $\bar{C} =\frac{C_{\lambda_0}}{\delta} + L_V \lambda $. Consider three cases.
    \begin{itemize}
        \item If $V(z_s) \leq \bar{C}$, then \eqref{Vestneg2} implies that $V(z_k) \leq \bar{C}$ by induction, and moreover
    \begin{equation}\label{Vestneg3}
        V(z_k) - \bar{C}\leq \big[ V(z_s) - \bar{C}\big] \big(1- \delta \Delta\big)^{\frac{t_k-s}{\Delta}},\quad \forall t_k \in \Pi^\Delta[s,t].
    \end{equation}
        \item If $V(z_k) \leq \bar{C} < V(z_s)$ for some $t_k \in \Pi^\Delta[s,t]$ then the left hand side of \eqref{Vestneg3} is non-positive while the right hand side is positive, thus \eqref{Vestneg3} still holds.
        \item If $\bar{C} < V(z_k)$ for some $t_k \in \Pi[s,t]$ then by \eqref{Vestneg2}, $\bar{C} < V(z_i)$ for all $t_i \in \Pi^\Delta[s,t_k]$ and the sequence $\{V(z_i): t_i \in \Pi^\Delta[s,t_k]\}$ is strictly decreasing. On the other hand, it follows from \eqref{fgrowth} that
        \begin{equation}\label{CfVmu}
        \|f(z_k+\eta_k)\| \leq C_{f,V}\big(1+V(z_k+\eta_k)^\mu\big)\leq C_{f,V}\Big(1+\big[V(z_k)+4L_V\lambda)\big]^\mu\Big) \leq C_{f,V,\mu} \big(1+V(z_k)^\mu\big)
         \end{equation}
        for a generic constant $C_{f,V,\mu}$. As a result, \eqref{Vestneg2} yields
    \begin{eqnarray*}
       0 < V(z_{i+1}) - \bar{C} &\leq& \big[ V(z_i) - \bar{C}\big] \Big(1- \frac{\delta \Delta}{1+ M C_{f,V,\mu} \big(1+V(z_i)^\mu\big)\Delta} \Big)\\
        &\leq& \big[ V(z_i) - \bar{C}\big] \Big(1- \frac{\delta \Delta}{1+ M C_{f,V,\mu} \big(1+V(z_s)^\mu\big)\Delta} \Big),\quad \forall s\leq t_i <t_k.
    \end{eqnarray*}
        Hence by induction, it follows in this case that
        \begin{equation}\label{Vestneg4}
            0 < V(z_k) - \bar{C} \leq \big[ V(z_s) - \bar{C}\big] \Big(1- \frac{\delta \Delta}{1+ M C_{f,V,\mu} \big(1+V(z_s)^\mu\big)\Delta} \Big)^{\frac{t_k-s}{\Delta}}.
        \end{equation}
    \end{itemize}
    By assigning $t_k = t$ and using the fact that $V(y_t) \leq V(z_t) + 4L_V \lambda,\ V(y_s) = V(z_s)$, we obtain \eqref{Vestneg1}.
\end{proof}
\begin{proposition}\label{case2neg}
    Under assumption (${\textbf H}_A$), assume $s,t \in \Pi^\Delta, s <t$ are two consecutive times in $\Pi^\Delta$. Then the following estimates hold for a generic constant $C_{f,V,\mu}$
\begin{eqnarray}\label{Vestneg5}
     V(y_t) -\bar{C}&\leq& L_V\Big( C_g \| x_{s,t}\| + C_g^2 \| \X_{s,t}\|\Big)\notag\\
     && +\begin{cases}\big[V(y_s)-\bar{C}\big] \Big(1- \delta \Delta\Big)  & \text{if}\quad V(y_s)\leq\bar{C};\\
    \big[ V(y_s) - \bar{C}\big] \Big(1- \frac{\delta \Delta}{1+ M C_{f,V,\mu} \big(1+V(y_s)^\mu\big)\Delta} \Big)& \text{if}\quad V(y_s)>\bar{C}.\end{cases}
\end{eqnarray}
\end{proposition}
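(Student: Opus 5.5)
The plan mirrors the proof of Proposition \ref{case2}, with the negative-gradient bookkeeping from Proposition \ref{case1neg}. First we separate the rough increment from the tamed drift step. Write $\tilde y := y_s + \frac{f(y_s)\Delta}{1+M\|f(y_s)\|\Delta}$. By \eqref{tamedEulerDelta} we have $y_t = \tilde y + g(y_s)x_{s,t} + Dg(y_s)g(y_s)\X_{s,t}$. The Lipschitz bound \eqref{lipschitzV} together with \eqref{gcond.new} then gives
\[
V(y_t) \leq V(\tilde y) + L_V\big(C_g\|x_{s,t}\| + C_g^2\|\X_{s,t}\|\big).
\]
This accounts for the first term on the right-hand side of \eqref{Vestneg5}. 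It remains to prove the one-step drift estimate for $V(\tilde y) - \bar C$.

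For the drift step we use the mean value theorem. There is $\theta^*\in(0,1)$ with
\[
V(\tilde y) = V(y_s) + \frac{\Delta}{1+M\|f(y_s)\|\Delta}\big\langle \nabla V(y_s+\theta^* h), f(y_s)\big\rangle,
\]
where $h := \tilde y - y_s$. The taming gives $\|h\|\leq 1/M$. We rewrite $f(y_s) = f\big((y_s+\theta^* h) + \eta\big)$ with $\eta := -\theta^* h$, so that $\|\eta\|\leq 1/M \leq \lambda_0$ by \eqref{Mest}. Applying \eqref{gradientgeneral} with $\gamma(u)=C_{\lambda_0}-\delta u$, $\psi=0$ and this $\eta$, at the point $z=y_s+\theta^*h$, bounds the inner product by $C_{\lambda_0} - \delta V(y_s+\theta^* h)$. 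Then \eqref{lipschitzV} gives $V(y_s+\theta^*h) \geq V(y_s) - L_V/M \geq V(y_s) - L_V\lambda_0$. Combining these,
\[
V(\tilde y) - \bar C \leq \big[V(y_s) - \bar C\big]\Big(1 - \frac{\delta\Delta}{1+M\|f(y_s)\|\Delta}\Big),
\]
where $\bar C$ is as in Proposition \ref{case1neg}. Here $L_V/M \leq L_V\lambda$ should hold under \eqref{Mest}; if not, we enlarge $\bar C$ to include $L_V\lambda_0$, which is harmless since the constant is generic.

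Next comes the case split. If $V(y_s)\leq \bar C$, the bracket is nonpositive. Since $\frac{\delta\Delta}{1+M\|f\|\Delta}\leq \delta\Delta$, the factor $1-\frac{\delta\Delta}{1+M\|f\|\Delta}$ is at least $1-\delta\Delta > 0$, and multiplying the nonpositive bracket by it gives at most $[V(y_s)-\bar C](1-\delta\Delta)$. If $V(y_s) > \bar C$, the bracket is positive, and we need an upper bound on the factor, that is, an upper bound on $\|f(y_s)\|$. For this we use \eqref{fgrowth}: $\|f(y_s)\|\leq C(f,V)(1+V(y_s)^\mu)$, so we may take $C_{f,V,\mu}=C(f,V)$. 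Replacing $\|f(y_s)\|$ by this larger quantity increases the factor and yields the second case of \eqref{Vestneg5}.

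The main obstacle is the sign handling in the case split. In the first case the factor must be bounded \emph{below}, which needs $\delta\Delta<1$ (as fixed in the proof of Proposition \ref{case1neg}). In the second case it must be bounded \emph{above}, which needs the growth condition \eqref{fgrowth}. A secondary point is checking that the taming perturbation $\theta^*h$ stays within the $\lambda_0$-ball required by \eqref{gradientgeneral}, which is guaranteed by \eqref{Mest}.
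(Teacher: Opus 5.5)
Your proof is correct and follows the same route as the paper. You split off the rough increment via \eqref{lipschitzV} and apply the mean value theorem to the tamed drift step, with the gradient condition used at $y_s+\theta^*h$; you make explicit the choice $\psi=0$, $\eta=-\theta^*h$ with $\|\eta\|\le 1/M\le\lambda_0$, which the paper uses tacitly. You then carry out the case split that the paper only defers to Proposition~\ref{case1neg}.

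Two small remarks. Your hedge about $\bar{C}$ is unnecessary, since \eqref{Cbar} already contains $L_V\lambda_0\ge L_V/M$. In the first case, the bound $[V(y_s)-\bar{C}]\big(1-\frac{\delta\Delta}{1+M\|f(y_s)\|\Delta}\big)\le[V(y_s)-\bar{C}](1-\delta\Delta)$ holds even without $\delta\Delta<1$, because a nonpositive number times a larger factor can only get smaller.
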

\begin{proof}
    The proof is straightforward from \eqref{lipschitzV} and the Lagrange mean value theorem. Indeed, 
 \allowdisplaybreaks
    \begin{eqnarray*}
        V(y_t) &\leq& V\Big(y_s + \frac{f(y_s) \Delta}{1+ M\|f(y_s)\|\Delta}\Big) + L_V\Big(C_g \| x_{s,t}\| + C_g^2 \| \X_{s,t}\|\Big)\\
        &\leq& V(y_s) +\Big\langle \nabla V\Big(y_s + \theta^*\frac{f(y_s) \Delta}{1+ M\|f(y_s)\|\Delta} \Big),\frac{f(y_s)}{1+ M\|f(y_s)\|\Delta}\Big\rangle\Delta+ L_V\Big(C_g \| x_{s,t}\| + C_g^2 \| \X_{s,t}\|\Big)\\
        &\leq& V(y_s) + \Big[C_{\lambda_0} - \delta V\Big(y_s + \theta^*\frac{f(y_s) \Delta}{1+ M\|f(y_s)\|\Delta} \Big)  \Big]\frac{\Delta}{1+ M\|f(y_s)\|\Delta} + L_V\Big( C_g \| x_{s,t}\| + C_g^2 \| \X_{s,t}\|\Big)\\
        &\leq& V(y_s) + \frac{\big[\bar{C}- V(y_s)\big]\delta\Delta}{1+ M\|f(y_s)\|\Delta} 
        + L_V\Big( C_g \| x_{s,t}\| + C_g^2 \| \X_{s,t}\|\Big)
    \end{eqnarray*}
    which proves 
    \[
        V(y_t) - \bar{C}\leq \big[ V(y_s) - \bar{C}\big] \Big(1- \frac{\delta \Delta}{1+ M\|f(y_s)\| \Delta} \Big)+ L_V\Big( C_g \| x_{s,t}\| + C_g^2 \| \X_{s,t}\|\Big).
    \]
    The rest of the proof follows similar arguments and estimates to the proof of Proposition \ref{case1neg}.
\end{proof}
\begin{theorem}\label{tamedschemeneg}
    Under assumption (${\textbf H}_A$), the solution $y$ of the tamed numerical scheme \eqref{tamedEulerDelta} satisfies for all $t\in \Pi$ the estimate
    \begin{equation}\label{Vestnegmain}
        \Big|V(y_t) -\bar{C}\Big|
        \leq  \Big|V(y_0) -\bar{C}\Big| \exp \Bigg\{-\frac{\delta t}{1+\Delta MC_{f,V,\mu}\Big(1+ V(y_0)^\mu + H(\bx,\Pi^\Delta[0,t])^\mu\Big)} \Bigg\} + H(\bx,\Pi^\Delta[0,t]);
    \end{equation}    
 where
 \begin{equation}\label{Hbxneg}
 H\big(\bx,\Pi^\Delta[0,t]\big)= \max \Big\{\bar{C}, L_V\Big(4\lambda + C_g \|\Delta \bx\|_{\infty,\Pi^\Delta[0,t]} + C_g^2 \|\Delta \bx\|_{\infty,\Pi^\Delta[0,t]}^2\Big)N\Big(\frac{\lambda}{C_pC_g},\bx,\Pi^\Delta[0,t]\Big) \Big\}.
    \end{equation}
\end{theorem}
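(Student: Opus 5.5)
Partition $\Pi^\Delta[0,t]$ by the discrete stopping times $\tau^\Pi_n=\tau^\Pi_n\big(\frac{\lambda}{C_pC_g},\bx,\Pi^\Delta[0,t]\big)$, exactly as in the proof of Theorem \ref{tamedscheme}. On each interval $[\tau^\Pi_n,\tau^\Pi_{n+1}]$, either \eqref{lambdathreshold} holds, or $\tau^\Pi_n,\tau^\Pi_{n+1}$ are consecutive grid points where the single increment already exceeds the threshold. In the first case apply Proposition \ref{case1neg}; in the second apply Proposition \ref{case2neg}. In both cases we get a one-step recursion for $u_n:=V(y_{\tau^\Pi_n})-\bar{C}$ of the form
\begin{equation*}
u_{n+1}\le u_n\,\rho_n^{(\tau^\Pi_{n+1}-\tau^\Pi_n)/\Delta}+h,\qquad h:=L_V\Big(4\lambda + C_g \|\Delta \bx\|_{\infty} + C_g^2 \|\Delta \bx\|_{\infty}^2\Big).
\end{equation*}
Here $\rho_n=1-\delta\Delta$ if $u_n\le 0$, and $\rho_n=1-\frac{\delta\Delta}{1+MC_{f,V,\mu}(1+V(y_{\tau^\Pi_n})^\mu)\Delta}$ if $u_n>0$. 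We use $(1-x)\le e^{-x}$ to convert powers of $\rho_n$ into exponentials in $\tau^\Pi_{n+1}-\tau^\Pi_n$. The last partial interval up to $t$ is handled by the same propositions, since any sub-interval of a stopping-time interval still satisfies \eqref{lambdathreshold}.

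Next, obtain an a priori uniform bound $V(y_{\tau^\Pi_n})\le \max\{V(y_0),\bar C\}+H$ for all $n\le N:=N\big(\frac{\lambda}{C_pC_g},\bx,\Pi^\Delta[0,t]\big)$, where $H=H(\bx,\Pi^\Delta[0,t])$. This follows by induction because $\rho_n\le 1$: if $u_n\le 0$ then $u_{n+1}\le h$, and if $u_n>0$ then $u_{n+1}\le u_n+h$. So the positive part grows by at most $h$ per stopping interval, and over at most $N$ intervals $u_n^+\le (V(y_0)-\bar C)^+ + Nh\le |V(y_0)-\bar C|+H$. Consequently every contraction factor with $u_n>0$ is dominated uniformly by
\begin{equation*}
\bar\rho:=\exp\Big\{-\frac{\delta\Delta}{1+\Delta MC_{f,V,\mu}(1+V(y_0)^\mu+H^\mu)}\Big\}.
\end{equation*}
To reach this form we absorb $(\max\{V(y_0),\bar C\}+H)^\mu\le C(V(y_0)^\mu+H^\mu+\bar C^\mu)$ into the generic constant $C_{f,V,\mu}$, and use $H\ge\bar C$, which is why the $\max$ with $\bar C$ appears in \eqref{Hbxneg}. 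The factor $1-\delta\Delta\le\bar\rho$ is dominated as well.

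Now iterate. Split according to whether $u_n$ ever becomes non-positive. If $u_k>0$ for all $k\le n$, iterating gives $u_n\le u_0\bar\rho^{\tau^\Pi_n/\Delta}+\sum_k h\le |u_0|e^{-\delta\tau^\Pi_n/(1+\cdots)}+Nh$. If some $u_k\le 0$, then from the last such index onward the accumulated positive part is at most $Nh\le H$, so $u_n\le H$. For the lower side, $V\ge 0$ gives $-(V(y_t)-\bar C)\le \bar C\le H$. Combining the two sides yields $|V(y_t)-\bar C|\le |V(y_0)-\bar C|\,\bar\rho^{t/\Delta}+H$, which is \eqref{Vestnegmain}.

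The main obstacle is the bookkeeping of the state-dependent contraction rate. It must be made uniform through the a priori bound before iterating, and without circularity, since the rate depends on $V(y_s)$. I would do the a priori bound first, as a separate induction that uses only $\rho_n\le1$, and then run the exponential iteration with the frozen rate.
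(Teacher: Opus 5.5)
Your proposal is correct and follows essentially the same route as the paper: the stopping-time decomposition with Propositions \ref{case1neg} and \ref{case2neg}, an a priori bound on $\max_i V(y_{\tau_i})$ obtained from the recursion using only that the contraction factors are at most $1$, and then iteration with the frozen rate, absorbing $\bar C^\mu$ into the generic constant via $H\ge\bar C$. Your explicit handling of possible sign changes of $V(y_{\tau_n})-\bar C$, and of the lower bound via $V\ge 0$ and $H\ge\bar C$, fills in steps that the paper's proof leaves implicit.
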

\begin{proof}
  A direct consequence of the estimates \eqref{Vestneg1} in Propositions \ref{case1neg} and \eqref{Vestneg5} in Proposition \ref{case2neg} shows that: for a sequence of stopping times $\Big\{\tau^\Delta_n(\frac{\lambda}{C_pC_g},\bx,\Pi^\Delta[0,t])\Big\}$, we obtain the general estimate 
  \begin{eqnarray}\label{Vestneg6}
  V(y_{\tau^\Delta_{n+1}}) -\bar{C} &\leq& L_V\Big(4\lambda + C_g \|\Delta \bx\|_{\infty,\Pi[0,t]} + C_g^2 \|\Delta \bx\|_{\infty,\Pi[0,t]}^2\Big)\\
  &&+ \begin{cases}\big[V(y_{\tau^\Delta_n}) -\bar{C}\big] (1-\delta \Delta)^{\frac{\tau^\Delta_{n+1}-\tau^\Delta_n}{\Delta}} & \text{if\ } V(y_{\tau^\Delta_n}) \leq \bar{C} \\ \big[V(y_{\tau^\Delta_n}) -\bar{C}\big] \Big(1-\frac{\delta \Delta}{1+\Delta MC_{f,V,\mu}\big(1+V(y_{\tau^\Delta_n})^\mu\big)}\Big)^{\frac{\tau^\Delta_{n+1}-\tau^\Delta_n}{\Delta}} & \text{if\ } V(y_{\tau^\Delta_n}) > \bar{C} \end{cases}      \notag
  \end{eqnarray}
  for any two consecutive stopping times in $\Big\{\tau^\Pi_n(\frac{\lambda}{C_pC_g},\bx,\Pi^\Delta[0,t])\Big\}$. Using \eqref{Vestneg6} and induction arguments, it is easy to prove that for any $0\leq n \leq N\Big(\frac{\lambda}{C_pC_g},\bx,\Pi^\Delta[0,t]\Big)$
  \begin{eqnarray}\label{Vestneg7}
  V(y_{\tau^\Delta_n}) -\bar{C} &\leq& n L_V\Big(4\lambda + C_g \|\Delta \bx\|_{\infty,\Pi^\Delta[0,t]} + C_g^2 \|\Delta \bx\|_{\infty,\Pi^\Delta[0,t]}^2\Big)\\
  &&+ \begin{cases}\big[V(y_{\tau^\Delta_0}) -\bar{C}\big] (1-\delta \Delta)^{\frac{\tau^\Delta_n-\tau^\Delta_0}{\Delta}} & \text{if\ } V(y_{\tau^\Delta_0}) \leq \bar{C} \\ \big[V(y_{\tau^\Delta_0}) -\bar{C}\big] \Big(1-\frac{\delta \Delta}{1+\Delta MC_{f,V,\mu}\big(1+\max \limits_{0\leq i \leq n}V(y_{\tau^\Delta_i})^\mu\big)}\Big)^{\frac{\tau^\Delta_n-\tau^\Delta_0}{\Delta}} & \text{if\ } V(y_{\tau^\Delta_0}) > \bar{C}. \end{cases}      \notag
  \end{eqnarray}
A direct consequence of \eqref{Vestneg7} is that 
\[
  V(y_{\tau^\Delta_n}) \leq \bar{C} + V(y_{\tau^\Delta_0}) + n L_V\Big(4\lambda + C_g \|\Delta \bx\|_{\infty,\Pi^\Delta[0,t]} + C_g^2 \|\Delta \bx\|_{\infty,\Pi^\Delta[0,t]}^2\Big).
\]
Hence 
\[
\max  \limits_{0\leq i \leq n}  V(y_{\tau^\Delta_n}) \leq \bar{C} + V(y_{\tau^\Delta_0}) + H(\bx,\Pi^\Delta[0,t]),\quad \forall 0\leq n \leq N\Big(\frac{\lambda}{C_pC_g},\bx,\Pi^\Delta[0,t]\Big),
\]
which, together with \eqref{Vestneg7}, yields
 \allowdisplaybreaks
  \begin{eqnarray}\label{Vestneg8}
  &&\Big|V(y_{\tau^\Delta_n}) -\bar{C}\Big| \leq H(\bx,\Pi^\Delta[0,t])+ \Big|V(y_{\tau^\Delta_0}) -\bar{C}\Big| \times \notag\\
  && \times \max\Big\{(1-\delta \Delta)^{\frac{\tau^\Delta_n-\tau^\Delta_0}{\Delta}},\Big(1-\frac{\delta \Delta}{1+\Delta  MC_{f,V,\mu}\big(1+ \big[\bar{C} + V(y_{\tau^\Delta_0}) + H(\bx,\Pi^\Delta[0,t])\big]^\mu\big)}\Big)^{\frac{\tau^\Delta_n-\tau^\Delta_0}{\Delta}} \Big\}  \notag\\
  &\leq& H(\bx,\Pi^\Delta[0,t])+ \Big|V(y_{\tau^\Delta_0}) -\bar{C}\Big| \Big(1-\frac{\delta \Delta}{1+\Delta  MC_{f,V,\mu}\big(1+ \big[\bar{C} + V(y_{\tau^\Delta_0}) + H(\bx,\Pi^\Delta[0,t])\big]^\mu\big)}\Big)^{\frac{\tau^\Delta_n-\tau^\Delta_0}{\Delta}} \notag\\
  &\leq& H(\bx,\Pi^\Delta[0,t])+ \Big|V(y_{\tau^\Delta_0}) -\bar{C}\Big| \exp \Big\{-\frac{\delta (\tau^\Delta_n-\tau^\Delta_0)}{1+\Delta  MC_{f,V,\mu}\big(1+ V(y_{\tau^\Delta_0})^\mu + H(\bx,\Pi^\Delta[0,t])^\mu\big)} \Big\}
  \end{eqnarray}
for a generic constant $C_{f,V,\mu}$. In particular, by replacing $n:=N\Big(\frac{\lambda}{C_pC_g},\bx,\Pi^\Delta[0,t]\Big)$ in \eqref{Vestneg8} and using $\tau^\Delta_0 =0, \tau^\Delta_N =t$, we obtain \eqref{Vestnegmain}.

\end{proof}

\begin{corollary}\label{corV}
    Under assumption (${\textbf H}_A$), there exist generic constants $\delta_{f,V,\mu}, C_{f,V,\mu}$ and a generic function $\bar{H}(\Delta,\bx,[0,1])$ such that solution $y$ of the tamed numerical scheme \eqref{tamedEulerDelta} satisfies 
    \begin{equation}\label{Vestnegmain1}
        \Big|V(y_1) -\bar{C}\Big|^\mu
        \leq  \Big|V(y_0) -\bar{C}\Big|^\mu \exp \Bigg\{\frac{-\delta_{f,V,\mu}}{1 +\Delta MC_{f,V,\mu}\Big(\big|V(y_0)-\bar{C}\big|^\mu + \bar{H}(\bx,[0,1])\Big)} \Bigg\} + \bar{H}(\bx,[0,1]).
    \end{equation}    
\end{corollary}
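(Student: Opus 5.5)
Corollary \ref{corV} is Theorem \ref{tamedschemeneg} specialised to $t=1$, with both sides raised to the power $\mu\ge 1$, and with everything depending on $\bx$ collected into one generic function $\bar H$.

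\textbf{Step 1: apply the theorem at $t=1$.} Write $a:=|V(y_0)-\bar C|$ and $H:=H(\bx,\Pi^\Delta[0,1])$. Estimate \eqref{Vestnegmain} at $t=1$ gives
\[
|V(y_1)-\bar C|\le a\,e^{-\delta/D}+H,\qquad D:=1+\Delta MC_{f,V,\mu}\big(1+V(y_0)^\mu+H^\mu\big).
\]

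\textbf{Step 2: rewrite the denominator in terms of $a$.} Since $V(y_0)\le a+\bar C$ and $\mu\ge1$, we have $V(y_0)^\mu\le 2^{\mu-1}(a^\mu+\bar C^\mu)$. Hence
\[
D\le 1+\Delta MC'_{f,V,\mu}\big(a^\mu+1+\bar C^\mu+H^\mu\big)
\]
for a new generic constant $C'_{f,V,\mu}$. The additive constants $1+\bar C^\mu$ and $H^\mu$ will be absorbed into $\bar H$. Recall from \eqref{Hbxneg} that $H\ge\bar C>0$; this lets us bound $1+\bar C^\mu\le c\,H^\mu$ for a constant $c$ depending on $\bar C$, so no stray constants remain.

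\textbf{Step 3: raise to the power $\mu$.} Use the elementary convexity inequality: for $\varepsilon\in(0,1)$,
\[
(u+v)^\mu\le(1-\varepsilon)^{1-\mu}u^\mu+\varepsilon^{1-\mu}v^\mu .
\]
Take $u=a e^{-\delta/D}$, $v=H$ and $\varepsilon=\tfrac12(1-e^{-\delta/D})$. Then $(1-\varepsilon)^{1-\mu}e^{-\mu\delta/D}\le e^{-\delta'/D}$ for some $\delta'>0$: with $\theta=e^{-\delta/D}$ the factor is $\big(\tfrac{1+\theta}{2}\big)^{1-\mu}\theta^\mu$, which is strictly less than $\theta^{\mu-(\mu-1)}=\theta$ roughly. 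The price is that $\varepsilon^{1-\mu}$ blows up when $D$ is large, that is, when $a$ is large. This is the main obstacle.

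\textbf{Handling the obstacle.} The cleaner route is to split into cases rather than fight the $\varepsilon$-dependence.
\begin{itemize}
\item If $a e^{-\delta/D}\le H$, then $|V(y_1)-\bar C|^\mu\le 2^\mu H^\mu$, and this is absorbed into $\bar H$.
\item Otherwise $|V(y_1)-\bar C|\le 2ae^{-\delta/D}$, which is not yet contractive. In this case, use instead $\varepsilon$ small and fixed, so that $\varepsilon^{1-\mu}H^\mu$ is a generic $\bar H$-term, with $(1-\varepsilon)^{1-\mu}e^{-\mu\delta/D}$ as the contraction factor. This factor is bounded by $e^{-\delta_{f,V,\mu}/D'}$ provided $(1-\varepsilon)^{1-\mu}\le e^{(\mu-\frac12)\delta/D}$. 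That requirement holds once $\varepsilon$ is chosen comparably to $1/D$. In the regime where $D$ is very large, $\Delta M C(a^\mu)$ dominates, and the loss can be compensated by enlarging the constant $C_{f,V,\mu}$ in the denominator.
\end{itemize}

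\textbf{Conclusion.} Combining the cases, set $\bar H:=c_\mu\,H(\bx,\Pi^\Delta[0,1])^{\mu}$, bounded above by the $\Pi$-independent quantity from \eqref{Hupper}-type bounds. Adjusting $\delta_{f,V,\mu}$ and $C_{f,V,\mu}$ then yields \eqref{Vestnegmain1}. Integrability of $\bar H$ follows as in Theorem \ref{tameapprox}(ii).
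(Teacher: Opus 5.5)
Your Steps 1--2 are sound and follow the paper's route: specialise Theorem \ref{tamedschemeneg} to $t=1$, trade $V(y_0)^\mu$ for $|V(y_0)-\bar C|^\mu$ via generic constants, then raise to the power $\mu$ with a weighted convexity inequality. The gap is in your Case 2. There you need $\varepsilon$ fixed, so that $\varepsilon^{1-\mu}H^\mu$ does not depend on $y_0$. At the same time you need $\varepsilon\lesssim\delta/D$, so that $(1-\varepsilon)^{1-\mu}e^{-\mu\delta/D}$ is still of the form $e^{-\delta'/D}$. These two requirements are incompatible, and the second makes $\varepsilon^{1-\mu}H^\mu\gtrsim D^{\mu-1}H^\mu$ grow with $a$. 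Enlarging $C_{f,V,\mu}$ in the denominator does not compensate. It only pushes the exponential factor towards $1$, and the right-hand side of \eqref{Vestnegmain1} is at most $a^\mu+\bar H$ whatever the constants.

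In fact, for $\mu>1$ no case split or choice of $\varepsilon$ can work, because the bound \eqref{Vestnegmain} itself is too weak for the powered statement. Fix $\bx$, so that $H\ge\bar C>0$ is fixed, and let $a\to\infty$. Then $D\sim\Delta MC_{f,V,\mu}a^\mu$, hence $a(1-e^{-\delta/D})=O(a^{1-\mu})\to0$, and the mean value theorem gives
\[
\big(ae^{-\delta/D}+H\big)^\mu-a^\mu\;\ge\;\mu a^{\mu-1}\big(H-a(1-e^{-\delta/D})\big)\;\longrightarrow\;\infty .
\]
By contrast, the right-hand side of \eqref{Vestnegmain1} exceeds $a^\mu$ by at most $\bar H$, independently of $a$. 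So taking the $\mu$-th power creates a cross term of order $a^{\mu-1}H$. The contraction gains only an amount of order $\delta/(\Delta MC_{f,V,\mu})$ and cannot absorb it, so \eqref{Vestnegmain1} cannot be extracted from \eqref{Vestnegmain} alone.

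The paper's proof runs into the same obstacle at the same place. With fixed $\kappa$ it obtains a factor $e^{\kappa(\mu-1)}$, then moves $\kappa(\mu-1)$ into the numerator of the exponent over $D$; that is an upper bound only when $D\le 1$. What your Steps 1--2 legitimately yield is the un-powered estimate $|V(y_1)-\bar C|\le a\exp\{-\delta'/(1+\Delta MC'(a^\mu+\bar H))\}+\bar H$, which is the corollary when $\mu=1$. For $\mu>1$ the powered form needs an argument that does not pass through \eqref{Vestnegmain} alone, and the recursion in Lemmas \ref{lemmaF1}--\ref{lemmaF2} would have to be adapted accordingly.
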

\begin{proof}
    The proof follows directly from \eqref{Vestnegmain} for $t=1$ by applying the inequality 
     \allowdisplaybreaks
    \begin{eqnarray*}
    &&\frac{-\delta }{1+\Delta MC_{f,V,\mu}\Big(1+ V(y_0)^\mu + H(\bx,\Pi^\Delta[0,1])^\mu\Big)} \\
    &=&\frac{-\frac{\delta}{1+\Delta MC_{f,V,\mu}}}{1+\frac{\Delta MC_{f,V,\mu}}{1+\Delta MC_{f,V,\mu}}\Big(V(y_0)^\mu + H(\bx,\Pi^\Delta[0,1])^\mu\Big)} \\
    &\leq&\frac{-\delta(1-\Delta MC_{f,V,\mu})}{1+\Delta M\frac{C_{f,V,\mu}}{1+\Delta MC_{f,V,\mu}}\Big(V(y_0)^\mu + H(\bx,\Pi^\Delta[0,1])^\mu\Big)} \\
    &\leq&\frac{-\delta(1-\Delta MC_{f,V,\mu})}{1+\Delta MC_{f,V,\mu}\Big(V(y_0)^\mu + H(\bx,\Pi^\Delta[0,1])^\mu\Big)} 
    \end{eqnarray*}
    for sufficiently small $\Delta$ such that $\Delta MC_{f,V,\mu} < 1$. Then using Jensen inequality of the form 
    \[
    (a+b)^\mu \leq  b^\mu \Big(\frac{1+\kappa}{\kappa}\Big)^{\mu-1}+a^\mu (1+\kappa)^{\mu-1}  \leq  b^\mu \Big(\frac{1+\kappa}{\kappa}\Big)^{\mu-1}+a^\mu e^{\kappa(\mu-1)} ,\quad \forall a,b,\kappa >0
    \]
    yields
    \begin{eqnarray}\label{Vestneg9}
        \Big|V(y_1) -\bar{C}\Big|^\mu
        &\leq&  \Big(\frac{1+\kappa}{\kappa}\Big)^{\mu-1}H(\bx,\Pi^\Delta[0,1])^\mu + \Big|V(y_0) -\bar{C}\Big|^\mu  \notag\\
        && \times \exp \Bigg\{\frac{-\mu \delta (1- \Delta MC_{f,V,\mu}) + \kappa(\mu-1)}{1+\Delta MC_{f,V,\mu}\Big(\big|V(y_0)-\bar{C}\big|^\mu + H(\bx,\Pi^\Delta[0,1])^\mu\Big)} \Bigg\}\notag\\
        &\leq& \Big(\frac{1+\kappa}{\kappa}\Big)^{\mu-1}H(\bx,\Pi^\Delta[0,1])^\mu + \Big|V(y_0) -\bar{C}\Big|^\mu  \\
        && \times \exp \Bigg\{\frac{-\mu \delta (1- \Delta MC_{f,V,\mu}) + \kappa(\mu-1)}{1+\Delta MC_{f,V,\mu}\Big(\big|V(y_0)-\bar{C}\big|^\mu + \Big(\frac{1+\kappa}{\kappa}\Big)^{\mu-1}H(\bx,\Pi^\Delta[0,1])^\mu\Big)} \Bigg\}.\notag
    \end{eqnarray}    
    Obverse that, due to \eqref{Hbxneg}, \eqref{Nest} and the estimate
    \[
    \|\Delta \bx\|_{\infty,\Pi^\Delta[0,1]} \leq \|\Delta \bx\|_{\infty,\Delta,[0,1]}\leq \ltn \bx \rtn_{\tp,[0,1]}, 
    \]
    $\Big(\frac{1+\kappa}{\kappa}\Big)^{\mu-1}H$ is then bounded from above by
    \begin{eqnarray}\label{barH}
       &&\Big(\frac{1+\kappa}{\kappa}\Big)^{\mu-1} H(\bx,\Pi^\Delta,[0,1])\notag\\
       &\leq& \Big(\frac{1+\kappa}{\kappa}\Big)^{\mu-1} \max \Big\{\bar{C}, 2L_V\Big(4\lambda + C_g \ltn\bx\rtn_{\tp,[0,1]} + C_g^2 \ltn\bx\rtn_{\tp,[0,1]}^2\Big)\times\notag\\
       &&\hspace{7cm}\times \Big(1+\frac{C_p^pC_g^p}{\lambda^p}\ltn\bx\rtn^p_{\tp,[0,1]}\Big) \Big\} \notag\\
       &\leq& C(\kappa,\bar{C},\lambda,C_g) \Big(1+\ltn\bx\rtn^{p+2}_{\tp,[0,1]}\Big)=: \bar{H}(\bx,[0,1])
    \end{eqnarray}
Therefore, as an increasing function of $\Big(\frac{1+\kappa}{\kappa}\Big)^{\mu-1}H$, the right hand side of \eqref{Vestneg9} is less than the right hand side of \eqref{Vestnegmain1} for generic constants $\delta_{f,V,\mu}, C_{f,V,\mu}$ and a generic function $\bar{H}(\bx,[0,1])$ (dependent on $\kappa$) defined by \eqref{barH}, where we choose $\Delta,\kappa >0$ sufficiently small such that 
    \begin{equation}\label{deltafVmu}
    1-\Delta MC_{f,V,\mu} >0,\quad \delta_{f,V,\mu}:=\mu \delta \big(1- \Delta MC_{f,V,\mu}\big) - \kappa(\mu-1) >0.
    \end{equation}
\end{proof}

We need the following auxiliary results (see the proofs in Appendix \ref{proofs}) 
\begin{lemma}\label{xpvarpoly}
i, Under assumption (${\textbf H}_X$), for any constants $\rho \geq 1,r_0>1$, there exists a positive random variable $C$, $C(\omega) \in (0,\infty)$ a.s. such that       
\begin{equation}\label{xpvarpoly1}
    \ltn \bx(\omega) \rtn^\rho_{\tp,[-n-1,-n]} \leq C(\omega) (1+n^{r_0}),    \quad \forall n \in \N, \forall \omega \in \Omega \quad \text{a.s.}
\end{equation}
ii, If $X$ is a Gaussian rough path, then there exist constants $r_0 >1, C_0>0$ such that 
\begin{equation}\label{Ebx}
     \Big(\E \ltn \bx(\omega) \rtn_{\tp,[-1,0]}^{\rho n}\Big)^{\frac{1}{n}} \leq C_0\big(1+n^{r_0}\big),\quad \forall n \in \N. 
\end{equation}
\end{lemma}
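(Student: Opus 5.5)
Part i is a Borel--Cantelli argument built on the stationarity of the increments of $\bx$. By (${\textbf H}_X$) the law of the shifted rough path restricted to $[-n-1,-n]$ does not depend on $n$, so $\xi_n := \ltn \bx \rtn^\rho_{\tp,[-n-1,-n]}$ are identically distributed. I first show that $\xi_0$ has finite moments of every order. For this I apply the Kolmogorov/Garsia--Rodemich--Rumsey type continuity criterion for rough paths to the moment condition \eqref{conditionx}. It bounds the $\nu'$-H\"older rough path norm, for $\nu'<\nu$ with $1/\nu' < p$, by a random variable with moments of order $p$. Since $p$ can be taken large (it is only required that $p\nu\geq1$ and that the moment bound holds, as for fBm), $\E\,\xi_0^{\,m}<\infty$ for a suitable $m$ with $m r_0>1$.

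Next, Markov's inequality gives $\bP(\xi_n > n^{r_0}) \leq \E\,\xi_0^{\,m}\, n^{-m r_0}$, which is summable in $n$. By Borel--Cantelli there is a random index $n_0(\omega)$ such that $\xi_n(\omega)\leq n^{r_0}$ for all $n\geq n_0(\omega)$. I then set $C(\omega):=\max\{1,\xi_0(\omega),\dots,\xi_{n_0(\omega)}(\omega)\}$, which is finite almost surely, and \eqref{xpvarpoly1} follows. The main obstacle is to justify a moment of sufficiently high order from (${\textbf H}_X$) alone. If only the order $p$ given in \eqref{conditionx} is available, I would choose $r_0 > 1/m$ with $m = p/\rho$, or else use the fact, assumed for these examples, that \eqref{conditionx} holds for all large $p$ (as for Gaussian processes).

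For part ii, I use the Gaussian structure: for a Gaussian rough path lift, Fernique-type estimates (Friz--Victoir, generalized Fernique theorem) give that $\ltn \bx \rtn_{\tp,[-1,0]}$ has Gaussian tails, $\bP(\ltn\bx\rtn>u)\leq C e^{-cu^2}$. The moments then satisfy $\E\ltn\bx\rtn^{q}\leq C^q\,\Gamma(q/2+1)$, i.e. $(\E\ltn\bx\rtn^{q})^{1/q}\leq C\sqrt{q}$. Taking $q=\rho n$ and raising to the power $\rho$ gives $(\E\ltn\bx\rtn^{\rho n})^{1/n}\leq C^\rho(\rho n)^{\rho/2}$. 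This is at most $C_0(1+n^{r_0})$ with $r_0=\max\{\rho/2,1+\epsilon\}$, so that $r_0>1$ as required.
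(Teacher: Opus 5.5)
Your proposal is correct and follows the paper's proof. Part i is the same Markov and Borel--Cantelli argument over the identically distributed variables $\ltn \bx\rtn^\rho_{\tp,[-n-1,-n]}$; since $r_0>1$ you can take $m=1$, so the only input needed is $\E\ltn\bx\rtn^\rho_{\tp,[-1,0]}<\infty$, which the paper likewise simply takes from (${\textbf H}_X$), and your choice $C(\omega)=\max\{1,\xi_0,\dots,\xi_{n_0(\omega)}\}$ is slightly cleaner than the paper's. Part ii is the same tail-to-moment computation via the Gamma function and Stirling's formula, except that the paper uses only $\E e^{\ltn\bx\rtn_{\tp,[0,1]}}<\infty$ and obtains $r_0=\rho$, whereas your Gaussian-tail bound gives the sharper growth $n^{\rho/2}$ and your choice $\max\{\rho/2,1+\epsilon\}$ guarantees $r_0>1$.
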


\begin{lemma}\label{lemmaF1}
    Assume $\xi(\cdot) \in L^1$ is a random variable and there exists a positive random variable $C(\xi,\omega)$ and constants $C_0(\xi),r(\xi)>0$ such that   
    \begin{equation}\label{xipoly}
    \xi(\theta_{-n} \omega) \leq C(\xi,\omega) \big[1+n^{r(\xi)}\big],\quad \forall n \in \N, \forall \omega \in \Omega.
    \end{equation}
Let $\delta,\epsilon$ be constants satisfying  
    \begin{equation}\label{deltaeps}
2>\delta >  \epsilon \big[1+ r(\xi)\big] \E \xi. 
     \end{equation}
    Then, given a random variable $d(\cdot)<\infty$ a.s., the random series
    \begin{equation}\label{Rlem}
R\Big(\epsilon,d(\omega),\xi(\omega)\Big):= \sum_{n=0}^{\infty} \xi(\theta_{-n} \omega) \exp \Big\{\sum_{j =0}^{n-1}\frac{-\delta}{1+ \epsilon d(\omega) + \epsilon \sum_{i = 0}^j \xi(\theta_{-i} \omega)}\Big\}
\end{equation}
is finite a.s. If in addition $d(\cdot) \in L^1$ and 
\begin{equation}\label{Exi}
    \big(\E \xi^n\big)^{\frac{1}{n}} \leq C_0(\xi) \Big[1+n^{r(\xi)}\Big],\quad \forall n \in \N,
\end{equation}
then $R(\epsilon,d,\xi) \in L^1$.
\end{lemma}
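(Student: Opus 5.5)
The plan is to control the exponential weight in \eqref{Rlem} through the growth of the partial sums $S_j(\omega):=\sum_{i=0}^{j}\xi(\theta_{-i}\omega)$. I will treat $\theta$ as an ergodic metric dynamical system, which is the usual setting for the stationary-increment process in (${\textbf H}_X$). Then Birkhoff's ergodic theorem applied to $\xi\in L^1$ and the inverse shift $\theta_{-1}$ gives $S_j(\omega)/(j+1)\to \E\xi$ almost surely.

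\emph{Almost sure finiteness.} Fix a small $\eta>0$ such that $\delta>\epsilon(1+r(\xi))(\E\xi+\eta)$; this is possible by \eqref{deltaeps}. There is then an a.s.\ finite random index $j_0(\omega)$ with $S_j(\omega)\le (j+1)(\E\xi+\eta)$ for all $j\ge j_0$. For $n>j_0$ I keep only the terms $j\ge j_0$ in the exponent and compare the sum with an integral:
\[
\sum_{j=j_0}^{n-1}\frac{\delta}{1+\epsilon d+\epsilon (j+1)(\E\xi+\eta)}\ \ge\ \frac{\delta}{\epsilon(\E\xi+\eta)}\log\frac{1+\epsilon d+\epsilon(n+1)(\E\xi+\eta)}{1+\epsilon d+\epsilon (j_0+1)(\E\xi+\eta)} .
\]
The exponential factor is therefore at most $K(\omega)\,n^{-\delta/(\epsilon(\E\xi+\eta))}$, where $K(\omega)$ depends on $d(\omega)$ and $j_0(\omega)$ and is finite a.s. Combining this with the polynomial bound \eqref{xipoly}, the $n$-th term of \eqref{Rlem} is at most $C(\xi,\omega)K(\omega)(1+n^{r(\xi)})\,n^{-\delta/(\epsilon(\E\xi+\eta))}$. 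This is summable because $\delta/(\epsilon(\E\xi+\eta))>1+r(\xi)$. The finitely many terms with $n\le j_0$ are finite, so $R<\infty$ a.s.

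\emph{Integrability.} Assume now $d\in L^1$ and \eqref{Exi}. I will bound $\E R$ term by term, using stationarity $\xi(\theta_{-n}\cdot)\overset{d}{=}\xi$ and Hölder's inequality with a large exponent $k$:
\[
\E\big[\xi(\theta_{-n}\cdot)\,e^{-\Sigma_n}\big]\le (\E\xi^k)^{1/k}\,\big(\E e^{-k'\Sigma_n}\big)^{1/k'} ,
\]
where $\Sigma_n$ denotes the exponent in \eqref{Rlem} and $k'=k/(k-1)$. By \eqref{Exi}, the first factor grows at most like $k^{r(\xi)}$. For the second factor I split $\Omega$ into two events. The good event is $G_n=\{S_j\le (j+1)(\E\xi+\eta)\ \forall j\ge \sqrt n\}\cap\{d\le n\}$. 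On $G_n$ the estimate from the first step holds deterministically, giving $e^{-k'\Sigma_n}\lesssim n^{-k'\delta/(\epsilon(\E\xi+\eta))}$ with a constant that does not depend on $\omega$; the $\log$ loss from starting at $\sqrt n$ is absorbed by taking $\eta$ smaller. On the complement $G_n^c$ I use $e^{-k'\Sigma_n}\le1$, so only $\bP(G_n^c)^{1/k'}$ is needed. Markov's inequality gives $\bP(d>n)\le \E d/n$. For the partial sums, I would use high-moment Markov bounds on $S_j$ via \eqref{Exi}, together with a maximal/union bound over dyadic blocks of $j$. This must yield a polynomial decay of $\bP(G_n^c)$ whose rate, after the power $1/k'$, beats $1+r(\xi)$ once $k$ is chosen large.

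\emph{Main obstacle.} The hard step is this tail estimate for $\bP(G_n^c)$. The terms $\xi(\theta_{-i}\cdot)$ are stationary but not independent, so no concentration inequality is available for free. The moment growth \eqref{Exi} controls each summand, but not the deviation $S_j/(j+1)-\E\xi$ directly. My first attempt would be to avoid concentration altogether by bounding $\Sigma_n$ from below with Jensen's inequality for the convex map $u\mapsto 1/(1+\epsilon d+\epsilon u)$, i.e.\ comparing the sum with $n$ times the value at the average of the $S_j$. I would then take expectations with the help of $\E S_j=(j+1)\E\xi$ and the constraint $\delta<2$, which I expect keeps the resulting exponential moment finite. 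If this turns out to be too lossy, I would instead impose (or extract from the Gaussian structure in Lemma \ref{xpvarpoly}(ii)) a mixing-type moment bound for $S_j-(j+1)\E\xi$, and then run the splitting argument above.
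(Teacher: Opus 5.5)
Your almost-sure part is correct and is essentially the paper's argument. Both use Birkhoff along $\theta_{-1}$, the bound $S_j\le (j+1)(\E\xi+\eta)$ beyond a random index, a logarithmic comparison giving $e^{-\Sigma_n}\le K(\omega)\,n^{-\delta/(\epsilon(\E\xi+\eta))}$, and then \eqref{xipoly} with $\delta/(\epsilon\E\xi)>1+r(\xi)$. The paper absorbs $d$ through the choice $1+\epsilon d<m(\omega,\kappa)\kappa$, while you keep it inside $K(\omega)$.

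The integrability part, however, is not proved, and the step you leave open is the whole difficulty. Your Jensen fallback cannot work. Convexity gives $\Sigma_n\ge n\delta/(1+\epsilon d+\epsilon\bar S_n)$ with $\bar S_n=\frac1n\sum_{j<n}S_j\approx\frac{n+1}{2}\E\xi$. This lower bound stays bounded (it tends to $2\delta/(\epsilon\E\xi)$), so the $\log n$ growth of $\Sigma_n$, which is the only source of decay, is lost. Moreover, replacing $\bar S_n$ by its mean inside $\exp\{-n\delta/(1+\epsilon d+\epsilon u)\}$ would need concavity in $u$. That fails near $u=0$ once $n\delta>2$, so $\delta<2$ does not help. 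The splitting also needs repair as written. On $\{d\le n\}$ the term $1+\epsilon d$ may be of order $n$; then $\Sigma_n$ stays bounded and there is no $\omega$-independent decay on $G_n$. Starting the sum at $j=\sqrt n$ halves the decay exponent, which shrinking $\eta$ does not recover; you would need $j\ge n^{\beta}$ with $\beta$ small. Above all, as you suspect, no usable bound on $\bP(G_n^c)$ follows from ergodicity and \eqref{Exi}.

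The paper handles this step by generalized H\"older over $n+1$ factors, followed by Jensen for $h(u)=\exp\{-\delta/(1+\epsilon u)\}$ (concave because $\delta<2$) to replace $d$ and $S_j$ by their means. But after H\"older the averaged function is $\exp\{-(n+1)\delta/(1+\epsilon u)\}$. This is concave only where $2+2\epsilon u\ge (n+1)\delta$, so that step is invalid too. In fact the $L^1$ claim does not follow from ergodicity, \eqref{xipoly} and \eqref{Exi} alone; here is a counterexample.
\begin{itemize}
\item Let $(\xi(\theta_{-n}\omega))_{n}$ be a stationary mixing renewal--reward sequence. It is constant on runs with i.i.d.\ lengths, $\bP(L\ge \ell)=\ell^{-5/4}$, and carries i.i.d.\ run values in $\{z_1,z_2\}$ independent of the lengths.
\item Take $z_2=2\delta/\epsilon$, and choose $z_1\ge 0$ and $\bP(\xi=z_2)>0$ small enough that $\delta>\epsilon(1+r(\xi))\E\xi$. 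Take $d\equiv 0$.
\item Then $\xi$ is bounded, so \eqref{xipoly} and \eqref{Exi} hold.
\item Suppose the run containing index $0$ has value $z_2$ and contains $0,\dots,B-1$. Then $\Sigma_n\le\frac12(1+\log n)$ for $n<B$, hence $R\ge 2e^{-1/2}z_2(\sqrt B-1)$.
\item Since $\bP(B\ge b)\asymp b^{-1/4}$ and $B$ is independent of the run value, $\E R=\infty$.
\end{itemize}
So your closing suggestion is the right diagnosis. The $L^1$ statement needs an extra quantitative deviation (mixing) hypothesis on $S_j/(j+1)-\E\xi$, which would then have to be checked for the $\xi=\bar H$ used in Theorem \ref{tamedattractor}. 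Your splitting scheme is the natural framework in which to use such a hypothesis.
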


\begin{remark}\label{Rrem}
  i, The definition of $R\Big(\epsilon,d(\omega),\xi(\omega)\Big)$ in \eqref{Rlem} shows that $R$ is increasing w.r.t. $\epsilon$. Moreover, for a.s. $\omega \in \Omega$, the decomposition \eqref{Rest} shows that $R\Big(\epsilon,d(\omega),\xi(\omega)\Big)$ is continuous w.r.t. $\epsilon$. In particular,
  \begin{equation}\label{R0}
  \lim \limits_{\epsilon \downarrow 0} R\Big(\epsilon,d(\omega),\xi(\omega)\Big) = R\Big(0,d(\omega),\xi(\omega)\Big) =\sum_{n=0}^\infty \xi(\theta_{-n}\omega) e^{-n\delta}
  =: R(0,\xi(\omega))\quad \text{a.s.}
  \end{equation}
  where $R(0,\xi)$ is independent of $\|u_0\|_\infty$.\\

  ii,  In the deterministic case of $\xi$ and $d_0$, we have the estimate
    \begin{equation}\label{Rlem2}
R(\epsilon,u_0,\xi):= \xi \sum_{n=0}^{\infty} \exp \Big\{\sum_{j =0}^{n-1}\frac{-\delta}{1+ \epsilon d_0+ \epsilon j\xi }\Big\} \approx \xi \sum_{n=1}^\infty n^{-\frac{\delta}{\epsilon \xi}} = \xi \zeta\Big(\frac{\delta}{\epsilon \xi}\Big).
\end{equation}

\end{remark}

\begin{lemma}\label{lemFprop}
    Denote by $F$ the two variable function
    \begin{equation}\label{F2}
        F(\xi,u) = u \exp \Big\{-\frac{\delta }{1+\epsilon u+ \epsilon \xi}\Big\}  + \xi,\quad \forall u,\xi \geq 0.
    \end{equation}
    Then $F$ is a strictly increasing function of $u$ and $\epsilon$. In particular,
    \begin{equation}\label{Fprop}
        0< \frac{\partial F}{\partial u} (\xi,u) < 1- \frac{\delta(1+\delta+\epsilon \xi)}{(1+\delta+\epsilon \xi +\epsilon u)^2}.
    \end{equation}
\end{lemma}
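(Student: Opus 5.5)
The approach is direct differentiation. Write $a := 1+\epsilon\xi \geq 1$ and $w := a + \epsilon u \geq a$, so that $F(\xi,u) = u\,e^{-\delta/w} + \xi$. Since $\partial w/\partial u = \epsilon$, the chain rule gives
\begin{equation*}
\frac{\partial F}{\partial u}(\xi,u) = e^{-\delta/w}\Big(1 + \frac{\epsilon\delta u}{w^2}\Big) = e^{-\delta/w}\Big(1 + \frac{\delta (w-a)}{w^2}\Big).
\end{equation*}
This is strictly positive, which gives the left inequality in \eqref{Fprop} and the strict monotonicity in $u$. For $\epsilon$, I would compute
\begin{equation*}
\frac{\partial F}{\partial \epsilon} = u\, e^{-\delta/w}\,\frac{\delta(u+\xi)}{w^2} .
\end{equation*}
This is positive for $u>0$, and for $u=0$ the function $F$ does not depend on $\epsilon$. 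Monotonicity in $\epsilon$ is also consistent with Remark \ref{Rrem} i.

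For the upper bound, the first step is the elementary inequality $e^{-x} < \frac{1}{1+x}$ for $x>0$, applied with $x=\delta/w$. This gives
\begin{equation*}
\frac{\partial F}{\partial u} < \frac{w}{w+\delta}\cdot\frac{w^2+\delta w-\delta a}{w^2} = 1 - \frac{\delta a}{w(w+\delta)} = 1-\frac{\delta(1+\epsilon\xi)}{(1+\epsilon\xi+\epsilon u)(1+\delta+\epsilon\xi+\epsilon u)}.
\end{equation*}
This bound is strictly less than $1$. It is the contraction-type property needed later when iterating $F$ along $\theta_{-n}\omega$, as in Lemma \ref{lemmaF1}, to produce the pullback absorbing set. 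I would aim to use this form of the bound downstream.

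The main obstacle is matching the exact right-hand side of \eqref{Fprop}, namely $1-\delta(a+\delta)/(w+\delta)^2$. Comparing it with the bound above reduces to $a/w \geq (a+\delta)/(w+\delta)$, which is equivalent to $a\geq w$. That holds only when $u=0$, so the crude inequality $e^{-x}<1/(1+x)$ does not suffice.

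I would first try a second-order bound $e^{-x}\leq 1-x+x^2/2$ and compare the two sides. However, an asymptotic check for large $w$ gives
\begin{equation*}
\frac{\partial F}{\partial u} = 1-\frac{\delta a+\delta^2/2}{w^2}+O(w^{-3}),
\end{equation*}
whereas the stated right-hand side is $1-\delta(a+\delta)/w^2+O(w^{-3})$. So for large $u$ the stated bound appears to fail; for example $\delta=a=1$, $w=100$ already violates it. My plan is therefore to prove \eqref{Fprop} in the corrected form $1-\delta a/(w(w+\delta))$ derived above, and to check that the subsequent attractor arguments only use $0<\partial_u F<1$ with this explicit gap.
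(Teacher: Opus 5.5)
Your derivative $\partial_uF=e^{-\delta/w}\bigl(1+\epsilon\delta u/w^2\bigr)$, the positivity, and the monotonicity in $\epsilon$ agree with the paper. This includes your caveat that $F\equiv\xi$ does not depend on $\epsilon$ at $u=0$, so ``strictly'' only holds for $u>0$.

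On the upper bound, you are right and the paper is not. Its proof bounds $e^{-\delta/w}\le\bigl(1+\delta/w+\delta^2/w^2\bigr)^{-1}$, i.e.\ it uses $e^{x}\ge 1+x+x^2$. That is false for $0<x\lesssim 1.79$; already $e^{0.01}<1.0101$. The remaining algebra, using $w-\epsilon u=1+\epsilon\xi$ and $w^2+\delta w+\delta^2\le(w+\delta)^2$, is fine. So the paper's argument fails precisely in the regime $\delta/w\to0$ (large $u$). There your expansion, $1-(\delta a+\delta^2/2)/w^2$ against $1-\delta(a+\delta)/w^2$, shows that \eqref{Fprop} is actually violated. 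Your numerical instance checks out: for $\delta=1$, $\xi=0$, $\epsilon u=99$ one gets $\partial_uF\approx0.999851$ against $1-2/101^2\approx0.999804$. Your replacement bound $1-\delta a/(w(w+\delta))$, obtained from $e^{-x}<1/(1+x)$, is correct, since $w^2+\delta w-\delta a\ge w^2>0$ preserves the strict inequality.

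For the downstream use, the upper bound in \eqref{Fprop} enters only in the proof of Lemma \ref{lemmaF2}, in the two-trajectory contraction leading to \eqref{udiff}. It is not used in Lemma \ref{lemmaF1}. Nor is it used in the absorbing bound \eqref{uk2F}, which needs only $u_{k+1}<u_k+\xi$ and monotonicity of the exponential factor in $u$. The comparison arguments in Remark \ref{remu} and Theorem \ref{tamedattractor} need only the monotonicity parts, which you have.

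Since $w(w+\delta)\le(w+\delta)^2$, your bound gives $\partial_uF<1-\delta(1+\epsilon\xi)/(1+\delta+\epsilon\xi+\epsilon u)^2$. This differs from \eqref{Fprop} only by dropping $\delta$ from the numerator. The new gap is still in $(0,1)$ and decreasing in $u$. Hence the following steps go through verbatim: replacing $\kappa u_k+(1-\kappa)\bar u_k$ by $u_k\vee\bar u_k$ and then by $\bar d$, the step $1-g\le e^{-g}$, and the Birkhoff argument yielding a negative exponent. Your plan of proving the corrected inequality and feeding it into Lemma \ref{lemmaF2} is therefore the right repair.
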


\begin{lemma}\label{lemmaF2}
    Consider a sequence of random variables $\{u_k\}_{k\in \N}$ such that 
    \begin{equation}\label{u0}
    \|u_0(\omega)\|_\infty:=\sup \limits_{n \in \N} u_0(\theta_{-n}\omega) < \infty\quad \text{a.s.,}\quad \E\|u_0(\omega)\|_\infty <\infty,
    \end{equation}
    and 
    \begin{equation}\label{uk1F}
        u_{k+1}(\omega) = F(\xi(\theta_{k+1}\omega),u_k(\omega)),\quad \forall k \in \N, \omega \in \Omega.
    \end{equation}
Then under the assumptions of Lemma \ref{lemmaF1}, 
    \begin{equation}\label{uk2F}
    u_n(\theta_{-n}\omega) \leq \|u_0(\omega)\|_\infty + R\Big(\epsilon,\|u_0(\omega)\|_\infty,\xi(\omega)\Big) <\infty,\quad \forall n \in \N\quad \text{a.s.}
    \end{equation}
Moreover, there exists a unique random pullback omega limit set $u_\infty(\omega) = u_\infty(\delta,\epsilon,\xi,\omega)$ given by
\begin{equation}\label{pullbacku}
    u_\infty(\omega) = \bigcap_{n=1}^\infty \overline{\bigcup_{k = n}^\infty u_k(\theta_{-k}\omega)} \quad \text{a.s.} 
\end{equation}
which is compact and independent of $u_0$, such that 
\begin{equation}\label{uinfty}
\limsup \limits_{n \to \infty} u_n(\theta_{-n}\omega) \leq |u_\infty(\omega)| \leq R\Big(\epsilon,\|u_0(\omega)\|_\infty,\xi(\omega)\Big) <\infty,\quad \text{a.s.}    
\end{equation}
If in addition \eqref{Exi} holds, then $|u_\infty(\cdot)|:= \max\{r: r\in u_\infty(\cdot)\} \in L^1$. 
\end{lemma}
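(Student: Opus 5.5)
We plan to iterate the recursion \eqref{uk1F} backwards along the pullback orbit, using the monotonicity in Lemma \ref{lemFprop}. Write $u^{(n)}_k := u_k(\theta_{-n}\omega)$ for $0\le k\le n$. Then
\[
u^{(n)}_{k+1}=F\big(\xi(\theta_{k+1-n}\omega),u^{(n)}_k\big),\qquad u^{(n)}_0=u_0(\theta_{-n}\omega)\le \|u_0(\omega)\|_\infty .
\]
Since $F$ is increasing in $u$, we have $u_n(\theta_{-n}\omega)\le v_n$. Here $v$ is the comparison sequence obtained by iterating $F$ from $v_0=\|u_0(\omega)\|_\infty$, with the noise terms $\xi(\theta_{-(n-1)}\omega),\dots,\xi(\omega)$ in this order.

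Unrolling $v_{k+1}=v_k e^{-\delta/(1+\epsilon v_k+\epsilon\xi_k)}+\xi_k$ gives
\[
v_n\le v_0\prod_{j}e^{-\delta/(\cdots)}+\sum_{i}\xi(\theta_{-i}\omega)\prod_{j<i}e^{-\delta/(1+\epsilon v_{\cdot}+\epsilon\xi_\cdot)} .
\]
To match the exponents in \eqref{Rlem}, we need the a priori bound $v_k\le \|u_0\|_\infty+\sum_{i}\xi(\theta_{-i}\omega)$ over the relevant indices. This follows trivially from $F(\xi,u)\le u+\xi$. Substituting it into the denominators only weakens the decay, since the decay factors are increasing in $v$. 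This gives
\[
v_n\le \|u_0\|_\infty+\sum_i \xi(\theta_{-i}\omega)\exp\Big\{-\sum_{j<i}\frac{\delta}{1+\epsilon d+\epsilon\sum_{l\le j}\xi(\theta_{-l}\omega)}\Big\}\le \|u_0\|_\infty+R(\epsilon,\|u_0\|_\infty,\xi(\omega)),
\]
with $d=\|u_0\|_\infty$. Finiteness comes from Lemma \ref{lemmaF1}. This proves \eqref{uk2F}.

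The main obstacle is an index-ordering issue. In the backward unrolling, the factor attached to $\xi(\theta_{-i}\omega)$ collects exactly the decay steps at times nearer to $\omega$. The partial sums in the denominators must be the sums over $\xi(\theta_{-l}\omega)$ with $l\le j$, which are these nearer terms. I expect I can bound $v$ at those intermediate steps by $d$ plus such partial sums. That bound requires running the crude bound $F(\xi,u)\le u+\xi$ from the start, and there the far-past terms do not appear in the partial sums. If this fails, I would instead bound by $d+\sum_{l\le n}\xi$ and absorb the extra terms using the ergodic growth \eqref{xipoly}.

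For the pullback limit, $u_\infty(\omega)$ defined in \eqref{pullbacku} is the omega-limit set of the bounded family $\{u_k(\theta_{-k}\omega)\}$. By \eqref{uk2F} it is a nonempty compact subset of $[0,\|u_0\|_\infty+R]$. To sharpen this to the bound $R$ and to show independence of $u_0$, I would use contraction. By \eqref{Fprop}, on the bounded set where all iterates live, $\partial_u F\le 1-c(\omega)<1$ along each step, with $c$ depending on $\epsilon$ and the bounds. Hence two orbits from different initial data satisfy $|u_n-\tilde u_n|\le \prod_k(1-c_k)\,|u_0-\tilde u_0|\to0$. Because $c_k\gtrsim 1/(1+\epsilon\sum\xi)^2$, the product diverges to zero using the polynomial growth of $\sum\xi$ from \eqref{xipoly}. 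It follows that the limit set is independent of $u_0$. Taking $u_0\equiv0$ removes the $\|u_0\|_\infty$ term and yields \eqref{uinfty}, with $R$ evaluated at $d=0\le\|u_0\|_\infty$ by the monotonicity of $R$ in $d$. Finally, $|u_\infty|\le R(\epsilon,\|u_0\|_\infty,\xi)\in L^1$ under \eqref{Exi} by the second part of Lemma \ref{lemmaF1}.
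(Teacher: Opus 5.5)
There is a genuine gap, exactly at the index-ordering step you flag, and neither of your options closes it. Along the pullback orbit from $\theta_{-n}\omega$, the crude bound $F(\xi,u)\le u+\xi$ controls the state at time $-j$ only by $d+\sum_{l=j}^{n-1}\xi(\theta_{-l}\omega)$. That is a sum over the \emph{oldest} values. Your displayed inequality, and the definition \eqref{Rlem} of $R$, instead need $d$ plus a partial sum over the values \emph{nearest} to $\omega$ ($l=0,\dots,j$, up to an index shift). The factor $\exp\{-\delta/(1+\epsilon v+\epsilon\xi)\}$ increases with $v$, so $v$ may only be replaced by something larger, and the near sum is not larger.

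Your fallback, bounding every denominator by $1+\epsilon d+\epsilon\sum_{l<n}\xi(\theta_{-l}\omega)\approx\epsilon n\E\xi$, gives the coefficient of $\xi(\theta_{-i}\omega)$ only a decay of roughly $e^{-i\delta/(\epsilon n\E\xi)}$. The resulting bound therefore grows linearly in $n$. \eqref{xipoly} cannot repair this, because the problem is the number of terms, not their size.

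The paper does not overcome this obstacle either. In the chain \eqref{uF3}, after relabelling, it replaces the partial sums $\sum_{i=n-j}^{n-1}\xi(\theta_{-i}\omega)$ by $\sum_{i=1}^{n-j}\xi(\theta_{-i}\omega)$; for $j=n$, a sum of all $n$ values becomes an empty sum. That replacement goes in the wrong direction. In fact the bound is false as stated. Take $u_0\equiv 0$ and a realisation with $\xi(\omega)=0$, $\xi(\theta_{-1}\omega)=a>0$, and $\xi(\theta_{-i}\omega)=0$ for $i\ge 2$. Then $u_n(\theta_{-n}\omega)=F(0,F(a,0))=a e^{-\delta/(1+\epsilon a)}$ for every $n\ge 2$, while $R(\epsilon,0,\xi(\omega))=a e^{-\delta}$. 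This contradicts both \eqref{uk2F} and \eqref{uinfty}. Small perturbations of this configuration (older values small over a long stretch) have positive probability, e.g.\ for bounded i.i.d.\ $\xi$ charging neighbourhoods of $0$ and $a$.

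So the missing idea is not a cleverer reindexing but a \emph{stationary} majorant for the intermediate states. One way to get it: using $1-e^{-x}\ge x/(1+x)$, for every $c<\delta/\epsilon$ there is a $C$ with $F(\xi,u)\le\max\{u-c+\xi,\,C(1+\xi)\}$. By \eqref{deltaeps} one can choose $c>\E\xi$. This Lindley-type recursion has a pullback supremum $B(\omega)$ that is finite a.s., by Birkhoff together with $\xi(\theta_{-m}\omega)/m\to0$. Monotonicity then gives $u_n(\theta_{-n}\omega)\le B(\omega)$, and bounds $B(\theta_{-m}\omega)$ at intermediate times.

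Your contraction step lacks exactly this. With iterates bounded by $d$ plus linearly growing partial sums, \eqref{Fprop} only yields $c_k\gtrsim(1+\epsilon d+\epsilon k\E\xi)^{-2}$. That sequence is summable, so your bound on $\prod_k(1-c_k)$ does not tend to $0$. The weakness is intrinsic, since $\partial_uF=1-\Theta(u^{-2})$ for large $u$.

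The paper's argument in \eqref{udiff2} has the right structure. It bounds the state at time $-(n-k)$ by a stationary quantity evaluated at $\theta_{k-n}\omega$; the paper uses $\bar d$, which rests on the flawed \eqref{uk2F}, but $B$ would serve. The rates then form a stationary positive sequence, and Birkhoff gives a strictly negative exponential rate. Your device of taking $u_0\equiv0$ and using monotonicity in $d$ is fine. However, the right-hand sides in \eqref{uk2F} and \eqref{uinfty} must be replaced by such a stationary bound, and its integrability has to be checked separately.
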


\begin{remark}\label{remu}
  i,  It follows from the ergodicity of the metric dynamical system $\theta$ that condition \eqref{u0} is equivalent to: there exists an subset $\Omega^0 \subseteq \Omega$ of full measure such that 
    \begin{equation}\label{u0bounded}
       \|u_0\|_{\infty,\Omega^0}:= \sup_{\omega \in \Omega^0} u_0(\omega) < \infty,\quad \forall \omega \in \Omega^0.
    \end{equation}
    Indeed, since the sets $\Omega^{0,\infty}_{k}:=\{\omega: \|u_0(\omega)\|_\infty \leq k\}$ are measurable, there exists $\mP(\Omega^{0,\infty}_{k_0}) >0$ for $k_0 \in \N$ large enough. Observe that for any $\omega_0 \in \Omega^{0,\infty}_{k_0}$ then $\theta_{-n}\omega_0 \in \Omega^0_{k_0} :=\{\omega: u_0(\omega) \leq {k_0}\}$ by definition, thus $\omega_0 \in \bigcap_{n=0}^\infty\theta_n \Omega^0_{k_0}$. This proves $\Omega^{0,\infty}_{k_0}\subseteq \bigcap_{n=0}^\infty\theta_n \Omega^0_{k_0}$. The ergodicity of $\theta$ yields
    \[
    0<\mP(\Omega^{0,\infty}_{k_0})  = \lim \limits_{n \to \infty} \frac{1}{n} \sum_{i =1}^n \mP(\theta_n \Omega^0_{k_0} \cap \Omega^{0,\infty}_{k_0}) = \mP(\Omega^{0,\infty}_{k_0}) \mP(\Omega^0_{k_0}).
    \]
    This happens only if $\mP(\Omega^0_{k_0}) =1$ or $\|u_0\|_{\infty,\Omega^0_{k_0}} \leq k_0 <\infty$. From now on we will use \eqref{u0bounded} in replace of \eqref{u0}.

    ii, By Lemma \ref{lemFprop} and the comparison principle, $u_n(\theta_{-n}\omega)$ is also increasing in $\epsilon$, thus $|u_\infty(\delta,\epsilon,\xi,\omega)|$ is non-decreasing in $\epsilon$. In particular, a direct computation shows that $|u_\infty(\delta,0,\xi,\omega)| = R(0,\xi(\omega))$. Due to \eqref{uinfty} and \eqref{R0},
    \begin{equation}\label{uinfty0}
\lim \limits_{\epsilon \to 0}  |u_\infty(\delta,\epsilon,\xi,\omega)| = R(0,\xi(\omega))=|u_\infty(\delta,0,\xi,\omega)|.
    \end{equation}
\end{remark}

We are now in the position to state our main theorem.
\begin{theorem}\label{tamedattractor}
    Under assumption (${\textbf H}_A$), there exists a $\Delta_0 >0$ sufficiently small such that for any step size $0<\Delta <\Delta_0$, the discrete random dynamical system $\Phi_{C_g}^\Delta$ of the tamed numerical scheme \eqref{tamedEulerDelta} with the regular grid $\Pi^\Delta$ admits a global numerical attractor $\cA_{C_g}^\Delta(\omega)$ which is upper semi-continuous w.r.t. $\Delta$ and $C_g$ in the sense that
    \begin{equation}\label{numattractorrough}
    \begin{split}
       & \lim \limits_{\Delta \to 0} d_H(\cA_{C_g}^\Delta(\cdot)|\cA^0_{C_g}(\cdot)) = 0 \quad \text{a.s.}\\
       & \lim \limits_{C_g \to 0} d_H(\cA_{C_g}^\Delta(\cdot)|\cA^\Delta_0(\cdot)) = 0 \quad \text{a.s.}
    \end{split}
    \end{equation}
    Moreover, if $X$ is a Gaussian rough path, then $\cA^\Delta \in L^1$ and the convergence in \eqref{numattractorrough} holds also in the $L^1$ sense.
\end{theorem}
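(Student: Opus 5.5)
The plan is to build an absorbing set by iterating the one-step estimate of Corollary \ref{corV} along the pullback direction, and then to bound it with Lemmas \ref{lemmaF1}--\ref{lemmaF2}. For the cocycle $\Phi^\Delta_{C_g}(1,\bx)$ over one unit of time ($1/\Delta$ grid steps, with $\Delta$ chosen so that $1/\Delta\in\N$), put $u_k(\omega):=|V(y_k)-\bar{C}|^\mu$, where $y_k$ is the position at integer time $k$. Corollary \ref{corV}, applied on $[k,k+1]$ with the shifted rough path $\theta_k\bx$, gives
\[
u_{k+1}\leq F\big(\xi(\theta_{k}\omega),u_k\big),\qquad \xi:=\bar{H}(\bx,[0,1]),\quad \epsilon:=\Delta M C_{f,V,\mu},
\]
where $\delta$ in Lemma \ref{lemFprop} is replaced by $\delta_{f,V,\mu}$ from \eqref{deltafVmu}. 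Since $F$ is increasing in $u$ by Lemma \ref{lemFprop}, a comparison argument dominates $u_k$ by the exact recursion \eqref{uk1F}. Next I will verify the hypotheses of Lemma \ref{lemmaF1}. The bound $\xi\leq C(1+\ltn\bx\rtn^{p+2}_{\tp,[0,1]})$ from \eqref{barH}, combined with Lemma \ref{xpvarpoly}(i), gives the polynomial growth \eqref{xipoly}. Condition \eqref{deltaeps} holds once $\Delta<\Delta_0$ is small enough, because $\epsilon$ is proportional to $\Delta$ and $\delta_{f,V,\mu}\to\mu\delta-\kappa(\mu-1)>0$. If $\delta_{f,V,\mu}\geq 2$, I will simply decrease it. In the Gaussian case, Lemma \ref{xpvarpoly}(ii) provides \eqref{Exi}.

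Lemma \ref{lemmaF2} then gives, for every tempered random set $D$ (in the sense of the paper: $\sup_n u_0(\theta_{-n}\omega)$ finite a.s.), a uniform bound $u_n(\theta_{-n}\omega)\leq \|u_0\|_\infty+R(\epsilon,\|u_0\|_\infty,\xi)$, and a limit bound $|u_\infty(\omega)|\leq R(\ldots)$. The idea is therefore to take the random closed ball
\[
\cB^\Delta(\omega):=\Big\{y:\ |V(y)-\bar{C}|^\mu\leq |u_\infty(\omega)|+1\Big\},
\]
which is compact by \eqref{Vbound}. Pullback absorption at integer times follows from \eqref{uinfty}. Between integer times, Theorem \ref{tamedschemeneg} on $[0,s]$ with $s\leq 1$ controls the intermediate grid points. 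The global numerical attractor $\cA^\Delta_{C_g}(\omega)$ is then the pullback omega-limit of $\cB^\Delta$, by the standard existence theorem for discrete random dynamical systems; continuity of the one-step map in $y$ holds because $f$ is locally Lipschitz and $g\in C^3_b$. If the noise is Gaussian, Lemma \ref{lemmaF2} gives $|u_\infty|\in L^1$, and $\alpha^{-1}\in\mathcal{K}^{\rm poly}_\infty$ then makes the radius of $\cA^\Delta$ integrable.

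For upper semicontinuity in $\Delta$, I take the usual route. Remark \ref{Rrem}(i) and Remark \ref{remu}(ii) show that $R(\epsilon,\cdot)$ and $|u_\infty|$ are monotone and continuous as $\epsilon=\Delta MC_{f,V,\mu}\downarrow0$, so the absorbing balls $\cB^\Delta(\omega)$ lie inside a common compact set $K(\omega)$ for all $\Delta<\Delta_0$. Suppose $a_\Delta\in\cA^\Delta_{C_g}(\omega)$ stays at distance at least $\varepsilon_0$ from $\cA^0_{C_g}(\omega)$. By invariance, $a_\Delta=\Phi^\Delta(T,\theta_{-T}\bx)b_\Delta$ with $b_\Delta\in\cA^\Delta_{C_g}(\theta_{-T}\omega)\subset K(\theta_{-T}\omega)$. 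Pullback attraction of $\cA^0_{C_g}$, taken uniformly on $K(\theta_{-T}\cdot)$, makes $\Phi(T,\theta_{-T}\bx)b_\Delta$ close to $\cA^0_{C_g}(\omega)$ for large $T$. Theorem \ref{tameapprox}(i), which is uniform for initial data in a compact set since $C(T,\bx,\|y_0\|)$ is monotone in $\|y_0\|$, makes $\Phi^\Delta$ close to $\Phi$ as $\Delta\to0$. Together these give a contradiction.

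For the limit $C_g\to0$, I use the same scheme. The bound $\bar{H}$ in \eqref{barH} is increasing in $C_g$, so the absorbing sets are again uniform. For fixed $\Delta$, the finite-step maps \eqref{tamedEulerDelta} converge to the deterministic tamed map \eqref{tamedEulerdeter} uniformly on compacts as $C_g\to0$, because the noise terms are $O(C_g\|\bx\|)$. The target attractor $\cA^\Delta_0$ comes from Section \ref{deterattractor}. The $L^1$ convergence follows by dominated convergence, with the dominating function being the integrable radius of the uniform absorbing set. The main obstacle is the uniform (in $\Delta$) bound on the absorbing sets together with the uniformity of the pathwise error on random compacts. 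The threshold on $|\Pi|$ in Theorem \ref{tameapprox}(i) depends on $\bx$ and $R$, so I will argue $\omega$-wise: fix $\omega$, fix $T$, and use $\sup_{\Delta<\Delta_0}$ of the ball radius, which is finite because $R$ is monotone in $\epsilon$.
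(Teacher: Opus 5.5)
Your proposal is correct and follows the paper's proof. The common steps are: comparison of $|V(y_k)-\bar C|^\mu$ with the recursion of Lemma \ref{lemmaF2} through Corollary \ref{corV} (with $\xi=\bar H$, $\epsilon=\Delta MC_{f,V,\mu}$, $\delta=\delta_{f,V,\mu}$ and $\Delta_0$ from \eqref{deltaeps}), the absorbing set built from $|u_\infty|+1$, monotonicity in $\Delta$ and $C_g$, and dominated convergence for the $L^1$ claims. The differences are that you write out the standard perturbation argument for upper semicontinuity, which the paper delegates to \cite[Theorem 5.3]{congduchong23}, and that you keep $\lambda$ fixed in the limit $C_g\to0$, whereas the paper sets $\lambda:=C_g$ so that the absorbing sets shrink to the deterministic set $\cB^\Delta_0$.
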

\begin{proof}
    As a direct consequence of Theorem \ref{tamedschemeneg} and Corollary \ref{corV}, it follows that $V(y_1)=V(\Phi^\Delta(1,\omega)y_0)$ and $V(y_0)$ satisfies \eqref{Vestnegmain1} for $\bar{H}$ given by \eqref{barH}. By comparison principle, 
    \[
    |V(\Phi^\Delta(k,\omega)y_0)-\bar{C}|^\mu \leq u_k(\omega),\quad \forall k \in \N,\omega \in \Omega 
    \]
    where $u_k$ comes from Lemma \ref{lemmaF2} for 
    \[
    u_0(\omega):= |V(y_0)-\bar{C}|^\mu, \quad \xi(\omega) := \bar{H}(\bx(\omega),[0,1])
    \]
    and 
    \[
    \epsilon:= \Delta MC_{f,V,\mu},\quad  \delta := \delta_{f,V,\mu}
    \]
    in \eqref{deltafVmu}. Here we choose $r(\xi):=r_0 >1$ a fixed constant in \eqref{xipoly} (this is valid due to \eqref{xpvarpoly1} in Lemma \ref{xpvarpoly}) and adapt condition \eqref{deltaeps} to the form
    \begin{equation}\label{deltaepsMain}
\delta_{f,V,\mu}=\mu \delta \big(1- \Delta MC_{f,V,\mu}\big) - \kappa(\mu-1) >  \Delta MC_{f,V,\mu} (1+ r_0) \E \bar{H}(\bx,[0,1]) 
     \end{equation}
 for $\delta$ in \eqref{deltamain}, $\mu$ in \eqref{fgrowth}, $M$ in \eqref{tamedEulerDelta} and $C_{f,V,\mu}$ in \eqref{CfVmu}. Condition \eqref{deltaepsMain} is equivalent to
 \begin{equation}
 \Delta_0:=\frac{\mu \delta - \kappa (\mu-1)}{MC_{f,V,\mu} \Big[(1+ r_0) \E \bar{H}(\bx,[0,1]) +\mu \delta\Big]} > \Delta.     
 \end{equation}
 Therefore, given any deterministic bounded set $D\subset \R^d$ in the universe $\cD$ given by
 \begin{equation}\label{univD}
\cD:=  \Big\{D \subset \R^d: D \quad \text{is a bounded set} \Big\},
 \end{equation} 
 we can prove by \eqref{uk2F} that for any $y_0 \in D$
 \[
 |V(\Phi^\Delta(n,\theta_{-n}\omega)y_0)-\bar{C}|^\mu \leq |V(y_0)-\bar{C}|^\mu+ R\Big(\Delta MC_{f,V,\mu},|V(y_0)-\bar{C}|^\mu,\bar{H}(\bx(\omega),[0,1])\Big),\quad \forall n\in \N.
 \]
Moreover by \eqref{uinfty}, 
\[
\limsup \limits_{n \to \infty}  |V(\Phi^\Delta(n,\theta_{-n}\omega)y_0)-\bar{C}|^\mu \leq \big|u_\infty(\delta_{f,V,\mu}\omega,\Delta MC_{f,V,\mu},\bar{H},\omega)\big|,\quad \forall y_0\in D;
\]
hence there exists an absorbing set 
 \[
 \cB_{C_g}^\Delta(\omega) = V^{-1}\Big(\bar{C}+\Big[1+\big|u_\infty(\delta_{f,V,\mu}\omega,\Delta MC_{f,V,\mu},\bar{H},\omega)\big|\Big]^{\frac{1}{\mu}}\Big)
 \]
 such that $\Phi^\Delta(n,\theta_{-n}\omega)D$ is absorbed in the pullback sense to $\cB_{C_g}^\Delta(\omega)$ a.s. 
 Hence, there exists a numerical random pullback attractor $\cA_{C_g}^\Delta$ for $\Phi_{C_g}^\Delta$ given by 
 \begin{equation}\label{ABgen}
 \cA_{C_g}^\Delta(\omega) = \bigcap_{n=1}^\infty \overline{\bigcup_{k = n}^\infty \Phi_{C_g}^\Delta(k,\theta_{-k}\omega) \cB_{C_g}^\Delta(\theta_{-k}\omega)}
 \end{equation}
 such that $\cA_{C_g}^\Delta$ attracts  every deterministic bounded set in the universe $\cD$ a.s. in the pullback sense. Moreover, due to Remark \ref{remu} (ii), $\big|u_\infty(\delta_{f,V,\mu}\omega,\Delta MC_{f,V,\mu},\bar{H},\omega)\big|$ is non-decreasing and satisfies \eqref{uinfty0}, thus $\cB_{C_g}^\Delta$ is non-decreasing in $\Delta$ w.r.t. inclusion and $d_H(\cB_{C_g}^\Delta(\omega) \big| \cB_{C_g}^0(\omega))  \downarrow 0$ as $\Delta \downarrow 0$ a.s. where $\cB_{C_g}^0(\omega) = V^{-1}\Big(\bar{C}+\Big[1+R\Big(0,\bar{H}(\bx(\omega),[0,1])\Big)\Big]^{\frac{1}{\mu}}\Big)$ is the pullback absorbing set of the continuous RDS $\Phi_{C_g}$ which is used to generate $\cA_{C_g}^0$ using \eqref{ABgen}. We can therefore apply similar arguments in \cite[Theorem 5.3]{congduchong23} to prove that $\cA_{C_g}^\Delta$ is upper semi-continuous w.r.t. $\Delta$ and \eqref{numattractorrough} holds a.s. \\

 To prove the upper semi-continuity of $\cA_{C_g}^\Delta$ w.r.t. $C_g$, observe from \eqref{Cbar} and \eqref{barH} that we can assign $\lambda := C_g$ for $C_g$ sufficiently small such that \eqref{Mest} holds. Then, given $\lambda := C_g$, $\bar{H}$ is increasing in $C_g$, and moreover 
 \[
\lim \limits_{\lambda = C_g \to 0} \bar{H}(\bx,[0,1])=\lim \limits_{ C_g \to 0} C\Big(\kappa,\frac{C_{\lambda_0}}{\delta}+L_VC_g,C_g,C_g\Big) \Big(1+\ltn\bx\rtn^{p+2}_{\tp,[0,1]}\Big) = \Big(\frac{1+\kappa}{\kappa}\Big)^{\mu-1} \frac{C_{\lambda_0}}{\delta} \quad \text{a.s.}
 \]
Hence, $\cB_{C_g}^\Delta$ is increasing in $C_g$ w.r.t. inclusion and $d_H(\cB_{C_g}^\Delta \big| \cB_{0}^\Delta(\omega))  \downarrow 0$ as $C_g \to 0$. The similar arguments as before are applied to prove the second limit in \eqref{numattractorrough}.\\

 Finally, if $X$ is a Gaussian rough path, then due to \eqref{barH} and \eqref{Ebx}, $\xi = \bar{H}$ satisfies \eqref{Exi} for $r(\xi) = p+2$, hence 
 $R\Big(\Delta M C_{f,V,\mu},|V(D)-\bar{C}|^\mu,\bar{H}(\bx(\cdot),[0,1])\Big) \in L^1$ due to Lemma \ref{lemmaF1}. Thus 
 \[
 \big|u_\infty(\delta_{f,V,\mu}\omega,\Delta MC_{f,V,\mu},\bar{H},\omega)\big|,|\cB_{C_g}^\Delta(\cdot)|, |\cA_{C_g}^\Delta(\cdot)| \in L^1.
 \]
 Similar to \cite[Theorem 17]{ducjost25}, the convergence in $L^1$ in the limits \eqref{numattractorrough} follows from  Lebesgue's dominated theorem, the non-decrease in $\Delta$ and $C_g$ (when we assign $\lambda := C_g$), and the integrability of $\cA_{C_g}^\Delta$.\\
\end{proof}

\begin{example}[Pitchfork system under fractional noises]\label{mul.noise}
Consider the scalar SDE driven by a fractional Brownian motion with $H \in (\frac{1}{3},\frac{1}{2}]$
\begin{equation}\label{boundedpitchfork}
    dy = (\alpha y -y^3)dt + C_g\tanh(y) dB^H. 
\end{equation}
where $\alpha, C_g >0$. A direct computation shows that $g(y) =C_g \tanh(y) = C_g\frac{e^y -e^{-y}}{e^y +e^{-y}}\in C_b^\infty(\R,[-1,1])$ is a strictly increasing, $g(0)=0$ and 
\[
\max \Big\{\|g\|_\infty,\|Dg\|_\infty,\|D^2g\|_\infty, \|D^3g\|_\infty \Big\} \leq 2C_g.
\]
Because the drift $f(y) = \alpha y -y^3$ is globally dissipative, the generated semigroup $\Phi_0$ of the unperturbed system admits a global attractor $\cA^0_0 = [-\sqrt{\alpha},\sqrt{\alpha}]$, where $\sqrt{\alpha}$ is a fixed point of $\Phi_0$. On the other hand, it follows from \cite[Section 2.2]{ducjost25} that, by choosing the strong Lyapunov function $V(y) = \sqrt{1+\|y\|^2}$, 
there exists a unique pathwise solution for system \eqref{boundedpitchfork}, of which zero is the trivial solution. Moreover, the generated RDS $\Phi_{C_g}(t,\omega)y_0$ from \eqref{boundedpitchfork} satisfies the oder-preserving property, i.e. $\Phi_{C_g}(t,\omega)y_0 > \Phi_{C_g}(t,\omega)\bar{y}_0$ for any $y_0 > \bar{y}_0$. Because $f,g$ are odd functions of $y$, so is $\Phi_{C_g}$ as an odd function of $y_0$, i.e. $\Phi_{C_g}(t,\omega)(-y_0) = -\Phi_{C_g}(t,\omega)y_0$.\\
From \cite[Theorem 3.1]{duc21}, $\Phi_{C_g}$ admits a random pullback attractor in $\cA_{C_g}(\cdot) \subset \R$ that is a compact random set on $\R$ and that $|\cA_{C_g}(\cdot)|\in \cL^\rho$ for any $\rho \geq 1$. Hence there exist 
\[
c_-(C_g,\omega) := \min \{y: y \in \cA_{C_g}(\omega)\} \in \cL^\rho \quad c_+(C_g,\omega) := \max \{y: y \in \cA_{C_g}(\omega)\}\in \cL^\rho,\quad \forall \rho \geq 1.  
\]
The invariance of $\cA_{C_g}$ and the oder-preserving property of $\Phi_{C_g}$ imply that $c_-(C_g,\cdot), c_+(C_g,\cdot)$ are also invariant under $\Phi_{C_g}$. Due to the monotonicity, any solution starting from above $c_+(C_g,\omega)$ (respectively below $c_-(C_g,\omega)$) will be attracted into $c_+(C_g,\omega)$ (respectively $c_-(C_g,\omega)$) in the pullback sense. Because the trivial solution is also invariant and should also be attracted to $\cA_{C_g}$, it should belong to $\cA_{C_g}(\omega)$, implying $c_-(C_g,\omega) \leq 0 \leq c_+(C_g,\omega)$. The odd property of $\Phi_{C_g}$ thus yields $c_-(C_g,\omega) = -c_+(C_g,\omega)$. In particular, for any fixed $\epsilon>0$, there exists a $T= T(\epsilon,\omega)>0$ large enough such that
\begin{equation}\label{attractorex}
    \Phi_{C_g}(t,\theta_{-t}\omega) \sqrt{\alpha} \leq c_+(C_g,\omega) + \epsilon,\quad \forall t \geq T(\epsilon,\omega).
\end{equation}
Using similar arguments to the proof of \cite[Theorem 17]{ducjost25}, for a given $\omega$, we can choose $T(\epsilon,\omega)$ large enough and fix it, so that there exists an integrable random variable $\xi(T,\omega)$ such that
\begin{equation}\label{couplingex}
    \big|\Phi_{C_g}(T(\epsilon,\omega),\theta_{-T(\epsilon,\omega)}\omega)\sqrt{\alpha} - \Phi_0(T(\epsilon,\omega))\sqrt{\alpha}\big| \leq C_g \xi(T(\epsilon,\omega),\omega).
\end{equation}
It follows from \eqref{attractorex} and \eqref{couplingex} that
\[
\sqrt{\alpha} - C_g \xi(T(\epsilon,\omega),\omega)\leq \Phi_{C_g}(T(\epsilon,\omega),\theta_{-T(\epsilon,\omega)}\omega)\sqrt{\alpha} \leq C_+(C_g,\omega) +\epsilon.
\]
which implies 
\begin{equation}\label{cupest}
\sqrt{\alpha} - C_g \xi(T(\epsilon,\omega),\omega) -\epsilon \leq C_+(C_g,\omega).
\end{equation}
On the other hand, it follows from the upper semi-continuity of $\cA_{C_g}$ in $C_g$ that for $C_g = C_g(\epsilon,\omega)$ small enough,
\begin{equation}\label{cdownest}
C_+(C_g,\omega) \leq \sqrt{\alpha} +\epsilon.
\end{equation}
By taking $C_g \to 0$ in \eqref{cupest} and \eqref{cdownest}, we are able to prove that $\lim \limits_{C_g \to 0} c_+(C_g,\omega) = \sqrt{\alpha}$ almost surely. Since $c_+(C_g,\cdot)\in \cL^\rho$ is bounded from above (see \cite[Theorem 3.1]{duc21}), by Lebesgue dominated convergence theorem, $\lim \limits_{C_g \to 0} \E|c(C_g,\cdot)-\sqrt{\alpha} |^\rho =0$ for any $\rho \geq 1$. In particular,
\begin{eqnarray}\label{cest}
&& \lim \limits_{C_g \to 0} c_-(C_g,\cdot) = -\sqrt{\alpha};\quad \lim \limits_{C_g \to 0} c_+(C_g,\cdot) = \sqrt{\alpha}\quad \text{a.s. and} \notag\\
&& \lim \limits_{C_g \to 0} \E [c_-(C_g,\cdot)]^2 = \lim \limits_{C_g \to 0} \E [c_+(C_g,\cdot)]^2 = \alpha.    
\end{eqnarray}
Using \eqref{cest}, we are now able to apply \cite[Theorem 4.5]{duchongcong26} to show that for $C_g$ sufficiently small (independently of $\omega$)
\[
\E Df(c_{\pm}(C_g,\cdot)) =\alpha -3 \E c_\pm(C_g,\cdot)^2   \leq -2 \alpha +\epsilon <0.    
\]
As a result, the two stationary solutions $c_\pm(C_g,\omega)$ become locally exponentially stable provided that $C_g$ small enough (independent of $\omega$). 

We can now apply Theorem \ref{tamedattractor} to conclude that the tamed scheme
\[
y_{k+1} = y_k + \frac{(\alpha y_k -y_k^3)\Delta}{1+M(\alpha y_k -y_k^3)\Delta} + C_g \tanh(y_k) x_{k\Delta,(k+1)\Delta} + C_g^2 \tanh(y_k) \Big[1-\tanh^2(y_k)\Big] x^2_{k\Delta,(k+1)\Delta}
\]
generates a discrete RDS $\Phi^\Delta_{C_g}$, which admits a global random pullback attractor $\cA^\Delta_{C_g}(\omega) \subset \R$ of the form $\cA^\Delta_{C_g}(\omega) = [-c^\Delta(C_g,\omega),c^\Delta(C_g,\omega)]$ where $c^\Delta(C_g,\omega)$ is invariant under $\Phi^\Delta_{C_g}$. Using same arguments as above and taking into account Theorem \ref{tamedattractor}, we prove that
\begin{eqnarray*}
&& \lim \limits_{\Delta \to 0} c^\Delta(C_g,\cdot) = c_+(C_g,\cdot);\\
&& \lim \limits_{C_g \to 0} c^\Delta(C_g,\cdot) = c^\Delta; 
\end{eqnarray*}
in both the pathwise and the $L^1$ sense, where by Subsection \ref{deterattractor}, $\cA^\Delta_0 = [-c^\Delta,c^\Delta]$ is the global attractor of the deterministic tamed scheme.
\end{example}

		\section{Appendix}

        \subsection{Rough paths and the probabilistic setting}\label{roughpath}
		Let us briefly present the concept of rough paths in the simplest form, following   Friz  \& Hairer  \cite{frizhairer} and  Lyons \cite{lyons98}. For any finite dimensional vector space $W$, denote by $C([a,b],W)$ the space of all continuous paths $y: [a,b] \to W$ equipped with the sup norm $\|\cdot\|_{\infty,[a,b]}$ given by $\|y\|_{\infty,[a,b]}=\sup_{t\in [a,b]} \|y_t\|$, 
		where $\|\cdot\|$ is the norm in $W$. We write $y_{s,t}:= y_t-y_s$. For $p\geq 1$, denote by $C^{p{\rm-var}}([a,b],W)\subset C([a,b],W)$ the space of all continuous paths $y:[a,b] \to W$ of finite $p$-variation 
		\[
		\ltn y\rtn_{\tp,[a,b]} :=\left(\sup_{\cP([a,b])}\sum_{i=1}^n \|y_{t_i,t_{i+1}}\|^p\right)^{1/p} < \infty, 
		\]
		where the supremum is taken over the entire class of finite partitions $\cP[a,b]$ of $[a,b]$. 
		Also, for each $0<\alpha<1$, we denote by $C^{\alpha}([a,b],W)$ the space of H\"older continuous functions with exponent $\alpha$ on $[a,b]$ equipped with the norm
		\begin{equation}\label{holnorm}
			\|y\|_{\alpha,[a,b]}: = \|y_a\| + \ltn y\rtn_{\alpha,[a,b]},\quad \text{where} \quad \ltn y\rtn_{\alpha,[a,b]} :=\sup_{\substack{s,t\in [a,b],\ s<t}}\frac{\|y_{s,t}\|}{(t-s)^\alpha} < \infty.
		\end{equation}
		Let  $\alpha \in (\frac{1}{3},\frac{1}{2})$ and $x \in C^\alpha([a,b],\R^m)$. A couple $\bx=(x,\X) \in \R^m \oplus (\R^m \otimes \R^m)$, where
		\[
		\X \in C^{2\alpha}([a,b]^2,\R^m \otimes  \R^m):= \left\{\X \in C([a,b]^2,\R^m \otimes  \R^m):  \sup_{\substack{s, t \in [a,b],\ s<t}} \frac{\|\X_{s,t}\|}{|t-s|^{2\alpha}} < \infty \right\}, 
		\]
		is called a {\it rough path lift} if it satisfies Chen's relation
		\begin{equation}\label{chen}
			\X_{s,t} - \X_{s,u} - \X_{u,t} = x_{s,u} \otimes  x_{u,t},\qquad \forall a \leq s \leq u \leq t \leq b. 
		\end{equation}
        The two-parameter function $\X$ then postulates the values for the iterated integral
        \begin{equation}
            \X_{s,t}=\int_s^t x_{s,r}dx_r.
        \end{equation}
        Such integrals are needed for representing pathwise solutions of stochastic differential equations.
		We introduce the rough path semi-norm 
		\begin{equation}\label{translated}
			\ltn \bx \rtn_{\alpha,[a,b]} := \ltn x \rtn_{\alpha,[a,b]} + \ltn \X \rtn_{2\alpha,[a,b]^2}^{\frac{1}{2}},\quad \text{where}\quad \ltn \X \rtn_{2\alpha,[a,b]^2}:= \sup_{s, t \in [a,b];s<t} \frac{\|\X_{s,t}\|}{|t-s|^{2\alpha}} < \infty.  
		\end{equation}
		Throughout this paper, we will fix parameters $\frac{1}{3}< \alpha < \nu <\frac{1}{2}$ and  $p = \frac{1}{\alpha}$ so that $C^\alpha([a,b],W) \subset C^{\tp}([a,b],W)$. We also set $q=\frac{p}{2}$ and consider the $\tp$ semi-norm 
		\begin{equation}\label{pvarnorm}
			\begin{split}
				\ltn \bx \rtn_{\tp,[a,b]} := \Big(\ltn x \rtn^p_{\tp,[a,b]} + \ltn \X \rtn_{\tq,[a,b]^2}^q\Big)^{\frac{1}{p}}, \ltn \X \rtn_{\tq,[a,b]^2} := \left(\sup_{\cP([a,b])}\sum_{i=1}^n \|\X_{t_i,t_{i+1}}\|^q\right)^{1/q}, 
			\end{split}
		\end{equation}
		where the supremum is taken over the whole class of finite partitions $\cP([a,b])$ of $[a,b]$.  

        For the discrete time set $\Pi$, we introduce the notation of $\|y\|_{\infty,\Pi[a,b]},\ltn y\rtn_{\tp,\Pi[a,b]}$ and $\ltn \bx \rtn_{\tp,\Pi[a,b]}$ in the discrete time interval $\Pi[a,b]$ in a similar way to those in the continuous time interval $[a,b]$.

        Denote by $T^2_1(\R^m) = 1 \oplus \R^m \oplus (\R^m \otimes \R^m)$ the set with the group product
	\[
	(1,g^1,g^2) \bullet (1,h^1,h^2) = (1, g^1 + h^1, g^1 \otimes h^1 + g^2 +h^2),
	\]
	for all ${\bf g} =(1,g^1,g^2), {\bf h} = (1,h^1,h^2) \in T^2_1(\R^m)$. Denote by $\cC^{0,\alpha}(I,T^2_1(\R^m))$ 
	the closure of $\cC^{\infty}(I,T^2_1(\R^m))$ in the H\"older space $\cC^{\alpha}(I,T^2_1(\R^m))$, and by 
	$\cC_0^{0,\alpha}(\R,T^2_1(\R^m))$ the space of all paths $ {\bf g}: \R\to T^2_1(\R^m))$ such that $ {\bf g}|_I \in \cC^{0,\alpha}(I, T^2_1(\R^m))$ for each compact interval $I\subset\R$ containing $0$. Assign $\Omega:=\cC_0^{0,\alpha}(\R,T^2_1(\R^m))$ and equip it with the Borel $\sigma$-algebra $\mathcal{F}$. Denote by $\theta$ the {\it Wiener-type shift}
	\begin{equation}\label{shift}
		(\theta_t \omega)_\cdot = \omega_t^{-1}\bullet \omega_{t+\cdot},\forall t\in \R, \omega \in \cC^{0,\alpha}_0(\R,T^2_1(\R^m)),
	\end{equation}  
	and define the so-called {\it diagonal process}  $\bX: \R \times \Omega \to T^2_1(\R^m), \bX_t(\omega) = \omega_t$ for all $t\in \R, \omega \in \Omega$. Under assumption 	(${\textbf H}_X$), it can be proved that there exists a probability measure $\bP$ which is $\theta$ - invariant \cite[Theorem 5]{BRSch17}. Thus $(\Omega,\mathcal{F},\mP)$ is a probability space equipped with the continuous (thus measurable) {\it metric dynamical system} $\theta: \R \times \Omega \to \Omega$. 
	In particular, the Wiener shift \eqref{shift} implies that
	\begin{equation}\label{roughshift}
		\ltn \bx(\theta_h \omega) \rtn_{\tp,[s,t]} = \ltn \bx(\omega) \rtn_{\tp,[s+h,t+h]}.
	\end{equation}
	It is proved in \cite[Lemma 6.1]{duchongcong26} that $\theta$ is ergodic if $X=B^H$ is a fractional Brownian motion. In this paper, we assume that the metric dynamical system $\theta$ is ergodic. 

    Note that when dealing with additive noise, we do not need rough path lifts but instead consider $\Omega:=\cC_0^{0,\alpha}(\R,\R^m)$ together with a Wiener shift $(\theta_t \omega)_\cdot = \omega_{t+\cdot} -\omega_t$.
	\begin{lemma}\label{chenlem}
For any $n \geq 1$, any sequence $t_0 < t_1 < \ldots < t_n$ and any constant $C>0$,	the following estimate holds
\begin{equation}\label{chenest} 
1+ C \|x_{t_0,t_n}\| + C^2 \|\X_{t_0,t_n}\| \leq \prod_{i = 0}^{n-1} \big(1+ C \|x_{t_i,t_{i+1}}\| + C^2 \|\X_{t_i,t_{i+1}}\|\big).
\end{equation}	
\end{lemma}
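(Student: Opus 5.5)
The plan is to argue by induction on $n$, using Chen's relation \eqref{chen} to pass from $n-1$ to $n$. For $n=1$ the two sides of \eqref{chenest} coincide, so there is nothing to prove. It is convenient to write $a_{s,t}:=C\|x_{s,t}\|$ and $b_{s,t}:=C^2\|\X_{s,t}\|$, so that each factor on the right-hand side is $1+a_{t_i,t_{i+1}}+b_{t_i,t_{i+1}}$. By induction, it then suffices to prove the two-interval inequality
\[
1+a_{s,t}+b_{s,t}\leq \big(1+a_{s,u}+b_{s,u}\big)\big(1+a_{u,t}+b_{u,t}\big),\qquad s\le u\le t .
\]
Indeed, applying this with $s=t_0$, $u=t_{n-1}$, $t=t_n$ and then using the induction hypothesis on $t_0<\dots<t_{n-1}$ for the first factor gives \eqref{chenest}. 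Here one uses that all quantities are nonnegative, so multiplying an inequality between nonnegative numbers by the nonnegative factor $1+a_{u,t}+b_{u,t}$ preserves it.

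For the two-interval inequality, I would use additivity of the first level, $x_{s,t}=x_{s,u}+x_{u,t}$, together with the triangle inequality to get $a_{s,t}\le a_{s,u}+a_{u,t}$. Chen's relation \eqref{chen} gives $\X_{s,t}=\X_{s,u}+\X_{u,t}+x_{s,u}\otimes x_{u,t}$. Taking norms and using $\|x\otimes y\|\le \|x\|\|y\|$ (valid for the Euclidean/Hilbert--Schmidt norm on $\R^m\otimes\R^m$; any compatible cross norm works) yields
\[
b_{s,t}\le b_{s,u}+b_{u,t}+a_{s,u}a_{u,t}.
\]
Adding $1$, the first estimate and this one gives
\[
1+a_{s,t}+b_{s,t}\le 1+(a_{s,u}+b_{s,u})+(a_{u,t}+b_{u,t})+a_{s,u}a_{u,t}.
\]
This is bounded by the expansion of the product $(1+a_{s,u}+b_{s,u})(1+a_{u,t}+b_{u,t})$, because the remaining cross terms $a_{s,u}b_{u,t}$, $b_{s,u}a_{u,t}$ and $b_{s,u}b_{u,t}$ are all nonnegative.

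The only point that needs care is the norm on the tensor product. The paper does not specify it, so I would state explicitly that it is chosen so that $\|x\otimes y\|\le\|x\|\|y\|$, which holds for the standard Euclidean choice. With that convention the scaling by $C$ and $C^2$ is exactly what makes the cross term $C^2\|x_{s,u}\|\|x_{u,t}\|$ equal to $a_{s,u}a_{u,t}$, and the rest of the argument is bookkeeping.
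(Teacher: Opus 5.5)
Your proposal is correct and follows the paper's own proof: reduce by induction to the two-interval case, then combine additivity of $x$, Chen's relation $\X_{s,t}=\X_{s,u}+\X_{u,t}+x_{s,u}\otimes x_{u,t}$ and the triangle inequality, and discard the nonnegative cross terms of the product. Your explicit requirement that the tensor norm satisfy $\|x\otimes y\|\le\|x\|\|y\|$ makes precise an assumption the paper uses implicitly.
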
 	
\begin{proof}
The proof is obvious by induction. The case $n =1$ holds trivially, so it is sufficient to prove  \eqref{chenest} for $n=2$. Using Chen's relation \eqref{chen} and the triangle inequality, we obtain
\begin{eqnarray*}
&& \big(1+ C \|x_{t_0,t_1}\| + C^2 \|\X_{t_0,t_1}\|\big)\big(1+ C \|x_{t_1,t_2}\| + C^2 \|\X_{t_1,t_2}\|\big) \\
&\geq& 1 + C (\|x_{t_0,t_1}\| + \|x_{t_1,t_2}\|) + C^2 (\|\X_{t_0,t_1}\|+ \|\X_{t_1,t_2}\| + \|x_{t_0,t_1}\| \|x_{t_1,t_2}\| ) \\
&\geq& 1 + C\|x_{t_0,t_2}\| + C^2 \|\X_{t_0,t_2}\|
\end{eqnarray*}
which proves \eqref{chenest} holds for $n=2$. 
\end{proof} 	
     
\subsection{Stopping time analysis}\label{Sectime}
		\subsubsection*{Stopping times for the continuous time case}
		Following \cite{duchongcong26}, for any fixed $\gamma \in (0,1)$ any any closed interval $I\subset [0,\infty)$, we define another sequence 
        $\{\tau_i(\gamma,\bx,I)\}_{i \in \N}$  by
		\begin{equation}\label{greedytime*}
			\tau_0 = \min{I},\quad \tau_{i+1}:= \inf\Big\{t>\tau_i:  \ltn \bx \rtn_{\tp, [\tau_i,t]}  = \gamma \Big\} \wedge \max{I}.
		\end{equation}
		Define $N(\gamma,\bx,I) :=\sup \{i \in \N: \tau_i < \max{I}\}+1$. It is easy to show a rough estimate 
		\begin{equation}\label{Nest}
			N(\gamma,\bx ,I) \leq 1 +\frac{1}{\gamma^p}\ltn \bx \rtn^p_{\tp,I}.
		\end{equation}

		\subsubsection*{Stopping times for discrete time sets}\label{dis_greedy}
   For the given discrete time set $\Pi$, Let $\gamma>0$ be a parameter. Assign $\tau^\Pi_0 (\gamma,\bx,[0,\infty))= 0$. For each $n\in \N$, assume $\tau^\Pi_n(\gamma,\bx,[0,\infty)) =t_k$ is determined, one can define $\tau^\Pi_{n+1}(\gamma,\bx, [0,\infty))$ by the following rule:  \begin{itemize}
			\item if $\ltn \bx \rtn_{\tp,\Pi[t_k,t_{k+1}]} > \gamma$ then set $\tau^\Pi_{n+1}(\gamma,\bx,[0,\infty)) := t_{k+1}$;
			\item else set $\tau^\Pi_{n+1}(\gamma,\bx,[0,\infty)):= \sup \{t_l >t_k: \ltn \bx \rtn_{\tp,\Pi[t_k,t_l]}  \leq \gamma \}$.	
		\end{itemize}   
        Denote $N(\gamma,\bx,\Pi(I))$ to be the number of stopping times $\tau^\Pi_n$ on the discrete interval $\Pi(I)$ of $\Pi$. By definition, between two consecutive stopping times $\tau$ there are at most two stopping times $\tau^\Pi$, hence
        \begin{equation}\label{Nestdiscrete}
            N(\gamma,\bx,\Pi(I)) \leq 2 N(\gamma,\bx,I).
        \end{equation}

\subsection{Proofs}\label{proofs}

\begin{proof}[{\bf Lemma \ref{xpvarpoly}}]
 i,   The proof follows from Markov inequality and Borel-Cantelli lemma. Indeed, from (${\textbf H}_X$) it follows that $\E \ltn \bx(\omega) \rtn^\rho_{\tp,[-n-1,-n]} = \E\ltn \bx(\omega) \rtn^\rho_{\tp,[-1,0]} <\infty$ for all $n \in \N$. By Markov inequality
    \begin{eqnarray*}
    \sum_{n=1}^\infty \mP\Big(\ltn \bx(\omega) \rtn^\rho_{\tp,[-n-1,-n]} > n^{r_0}\Big)& \leq& \sum_{n=1}^\infty \frac{\E \ltn \bx(\omega) \rtn^\rho_{\tp,[-n-1,-n]}}{n^{r_0}} \\
    &= &\E \ltn \bx(\omega) \rtn^\rho_{\tp,[-1,0]}\sum_{n=1}^\infty \frac{1}{n^{r_0}} <\infty.
    \end{eqnarray*}
    Hence, it follows from Borel-Cantelli lemma that 
    \[
    \limsup \limits_{n \to \infty} \frac{\ltn \bx(\omega) \rtn^\rho_{\tp,[-n-1,-n]}}{n^{r_0}} \leq 1\quad \text{a.s.}; 
    \]
    That means for any $\kappa >0$ fixed, there exists a random variable $n(\kappa,\omega) \in \N$ such that for any $\omega$ in a full measure subset of $\Omega$, 
    \[
    \ltn \bx(\omega) \rtn^\rho_{\tp,[-n-1,-n]} \leq (1+\kappa) n^{r_0},\quad \forall n \geq n(\kappa,\omega).
    \]
    By choosing $C(\omega):= (1+\kappa)\max \Big\{\ltn \bx(\omega) \rtn^\rho_{\tp,[-k-1,-k]}: 0\leq k \leq n(\kappa,\omega)\Big\}$, we obtain \eqref{xpvarpoly1}.\\

    ii, If $X$ is Gaussian, it follows from \cite{frizhauser} that $\E e^{\ltn \bx \rtn_{\tp,[0,1]}} <\infty$. As a result, by Markov inequality and Stirling's formula, for sufficiently large $n$
    \begin{eqnarray*}
             \Big(\E \ltn \bx(\omega) \rtn_{\tp,[-1,0]}^{\rho n}\Big)^{\frac{1}{n}} &\leq& \Big(\int_0^\infty u^{\rho n} \mP(\ltn \bx(\omega) \rtn_{\tp,[-1,0]} > u)du\Big)^{\frac{1}{n}}\\
             &\leq& \Big(\int_0^\infty u^{\rho n} \mP(e^{\ltn \bx(\omega) \rtn_{\tp,[-1,0]}} > e^u)du\Big)^{\frac{1}{n}}\\
&\leq& \Big(\E e^{\ltn \bx(\omega) \rtn_{\tp,[-1,0]}}\Big)^{\frac{1}{n}} \Big(\int_0^\infty u^{\rho n} e^{-u} du\Big)^{\frac{1}{n}}\\
 &\leq& \Big(\E e^{\ltn \bx(\omega) \rtn_{\tp,[-1,0]}}\Big)^{\frac{1}{n}} \Big(\Gamma(\rho n +1)\Big)^{\frac{1}{n}}\\
 &\leq& C_0 \Big(\sqrt{2\pi \rho n} \big(\frac{\rho n}{e}\big)^{\rho n} \Big)^{\frac{1}{n}}\\
 &\leq& C_0 \Big[1+ \Big(\frac{\rho}{e}\Big)^\rho\Big](1+n^\rho).
    \end{eqnarray*}
 We choose a generic constant $C_0$ large enough such that the above inequality holds for all $n\in \N$, which proves \eqref{Ebx} by choosing $r_0:= \rho$.   \\
\end{proof}

\begin{proof}[{\bf Lemma \ref{lemmaF1}}]
    First, assign 
\[
S_0(\omega):= \xi(\omega),\quad S_k (\omega) := \sum_{i = 1}^k \xi(\theta_i \omega),\quad S_{-k} (\omega) := \sum_{i = 1}^k \xi(\theta_{-i} \omega),\quad \forall k\in \N, k \geq 1.
\]
Then by Birkhorff ergodic theorem $\lim \limits_{n\to \infty}\frac{1}{n}S_{-n}(\omega) = \E\xi>0$. Hence for a.s. each $\omega \in \Omega$ fixed and any given $\kappa >0$, there exists $m(\omega,\kappa)$ such that, simultaneously 
\begin{equation}\label{momegakappa}
\begin{split}
& \frac{1}{k}S_{-k}(\omega) < \E\xi+\kappa,\quad \forall k \geq m(\omega,\kappa);\\
&1+\epsilon d(\omega) < m(\omega,\kappa) \kappa.
\end{split}
\end{equation}
To estimate the sum inside the exponential function in \eqref{Rlem}, observe from \eqref{momegakappa} that
\begin{eqnarray*}
    &&\sum_{k=0}^{n-1} \frac{-\delta}{1+ \epsilon d(\omega) + \epsilon S_{-k}(\omega)} \\
    &=&     \sum_{k=0}^{m(\omega,\kappa)} \frac{-\delta}{1+ \epsilon d(\omega) + \epsilon S_{-k}(\omega)} +     \sum_{k=m(\omega,\kappa)+1}^{n-1} \frac{-\delta}{1+ \epsilon d(\omega) + \epsilon S_{-k}(\omega)}\\
    &\leq&     \sum_{k=0}^{m(\omega,\kappa)} \frac{-\delta}{1+ \epsilon d(\omega) + \epsilon S_{-k}(\omega)} +     \sum_{k=m(\omega,\kappa)+1}^{n-1} \frac{-\delta}{1+ \epsilon d(\omega) + k \epsilon (\E \xi+\kappa)}\\
    &\leq&     \sum_{k=0}^{m(\omega,\kappa)} \frac{-\delta}{1+ \epsilon d(\omega) + \epsilon S_{-k}(\omega)}     -\frac{\delta}{\epsilon (\E \xi+2\kappa)} \Big[\sum_{k=1}^{n-1}\frac{1}{k} - \sum_{j=1}^{m(\omega,\kappa)}\frac{1}{k}\Big]\\
    &\leq&     \sum_{k=0}^{m(\omega,\kappa)} \frac{-\delta}{1+ \epsilon d(\omega)+ \epsilon S_{-k}(\omega)} -\frac{\delta}{\epsilon (\E \xi+2\kappa)} \Big(\log n - \Big[\log m(\omega,\kappa)+1\Big]\Big),
\end{eqnarray*}
where we use the fact that
\begin{eqnarray}\label{zetaf}
0&<&\Big(\sum_{k=1}^{n-1} \frac{1}{k}-\log n \Big)- \Big(\sum_{k=1}^{m(\omega,\kappa)} \frac{1}{k}-\log [m(\omega,\kappa)+1]\Big)\\
&=&  \sum_{k=m(\omega,\kappa)+1}^{n-1} \Big(\frac{1}{k}-\int_{k}^{k+1}\frac{1}{u}du\Big) = \sum_{k=m(\omega,\kappa)+1}^{n-1} \int_{0}^{1}\frac{udu}{k(u+k)}  <  \sum_{k=m(\omega,\kappa)+1}^\infty \frac{\int_0^1 udu}{k^2}< \frac{1}{2}\zeta(2)    \notag
\end{eqnarray}
where $\zeta(\cdot)$ is the zeta function. Assign
\[
\rho (\kappa,\epsilon):= \frac{\delta}{\epsilon (\E \xi+2\kappa)} >0.
\]
Because of \eqref{deltaeps}, we can choose $\kappa$ small enough so that 
 \[
 \rho(\kappa,\epsilon) - r(\xi)= \frac{\delta}{\epsilon (\E \xi+2\kappa)}-r(\xi)>1. 
 \]
As a result, 
\begin{eqnarray}\label{Rest}
R(\omega)
    &=& \sum_{n=0}^{m(\omega,\kappa)} \xi(\theta_{-n} \omega) \exp \Big\{\sum_{j =0}^{n-1}\frac{-\delta}{1+ \epsilon d(\omega)+ \epsilon S_{-j}(\omega)}\Big\}\notag\\
    &&+\sum_{n=m(\omega,\kappa)+1}^{\infty} \xi(\theta_{-n} \omega) \exp \Big\{\sum_{j =0}^{n-1}\frac{-\delta}{1+ \epsilon d(\omega) + \epsilon S_{-j}(\omega)}\Big\} \notag\\
    &=& \sum_{n=0}^{m(\omega,\kappa)} \xi(\theta_{-n} \omega) \exp \Big\{\sum_{j =0}^{n-1}\frac{-\delta}{1+ \epsilon d(\omega) + \epsilon S_{-j}(\omega)}\Big\}\notag\\
    &&+\exp \Big\{\sum_{j =0}^{m(\omega,\kappa)}\frac{-\delta}{1+ \epsilon d(\omega) + \epsilon S_{-j}(\omega)}\Big\}\times \notag\\
    &&\times \sum_{n=m(\omega,\kappa)+1}^{\infty} \xi(\theta_{-n} \omega) \exp \Big\{\sum_{j =m(\omega,\kappa)+1}^{n-1}\frac{-\delta}{1+ \epsilon d(\omega) + \epsilon S_{-j}(\omega)}\Big\}. 
\end{eqnarray}
The last term in \eqref{Rest} can be estimated, due to \eqref{zetaf} and  \eqref{xipoly}, as
\begin{eqnarray}\label{seriesfinite}
&&\sum_{n=m(\omega,\kappa)+1}^{\infty} \xi(\theta_{-n} \omega) \exp \Big\{\sum_{j =m(\omega,\kappa)+1}^{n-1}\frac{-\delta}{1+ \epsilon d(\omega) + \epsilon S_{-j}(\omega)}\Big\}\notag\\
&\leq&  \sum_{n=m(\omega,\kappa)+1}^{\infty} \xi(\theta_{-n} \omega) \exp \Big\{   -\frac{\delta}{\epsilon (\E \xi+2\kappa)} \Big(\log n - \log \Big[m(\omega,\kappa)+1\Big]\Big)
\Big\} \notag\\
&\leq& \Big[m(\omega,\kappa)+1\Big]^{\rho(\kappa,\epsilon)} C(\xi,\omega)\sum_{n=m(\omega,\kappa)+1}^{\infty} \frac{1+n^{r(\xi)} }{n^{\rho(\kappa,\epsilon)}}\notag\\
&\leq& \Big[m(\omega,\kappa)+1\Big]^{\rho(\kappa,\epsilon)} C(\xi,\omega) \Big[\zeta\big(\rho(\kappa,\epsilon)\big) + \zeta\big(\rho(\kappa,\epsilon) -r(\xi)\big) \Big] <\infty.
 \end{eqnarray}
 Hence, $R(\omega) < \infty$ almost sure.  \\

 To prove the integrability of $R$, observe that for $\delta <2$, the following function is concave on $\R_+$
    \begin{eqnarray*}
        h(u) = \exp \Big\{-\frac{\delta }{1+\epsilon u}\Big\} \Rightarrow \frac{d^2}{du^2}h(u) = h(u) \frac{\delta \epsilon^2}{(1+\epsilon u)^4} (\delta -2-2\epsilon u) <0.
    \end{eqnarray*} 
 We then apply H\"older inequality for the expectation and Jensen inequality for the concave function $h$, to obtain, due to \eqref{Exi},
 \begin{eqnarray*}
     \E R(\cdot) &=& \sum_{n=0}^{\infty} \E\xi(\theta_{-n} \omega) \exp \Big\{\sum_{j =0}^{n-1}\frac{-\delta}{1+ \epsilon d(\omega) + \epsilon S_{-j}(\omega)}\Big\} \notag\\
     &\leq& \sum_{n=0}^{\infty} \Big(\E\xi(\theta_{-n} \omega)^{n+1}\Big)^{\frac{1}{n+1}} \prod_{j =0}^{n-1} \Big(\E\exp \Big\{\frac{-(n+1)\delta}{1+ \epsilon d(\omega) + \epsilon S_{-j}(\omega)}\Big\}\Big)^{\frac{1}{n+1}}\\
     &\leq& \sum_{n=0}^{\infty} \Big(\E\xi^{n+1}\Big)^{\frac{1}{n+1}} \prod_{j =0}^{n-1} \Big(\exp \Big\{\frac{-(n+1)\delta}{1+ \epsilon \E d(\cdot) + \epsilon \E S_{-j}(\cdot)}\Big\}\Big)^{\frac{1}{n+1}}\\
     &\leq& \sum_{n=0}^{\infty} C_0(\xi) \Big[1+(n+1)^{r(\xi)}\Big]  \exp \Big\{\sum_{j =0}^{n-1}\frac{-\delta}{1+ \epsilon \E d(\cdot) + \epsilon j \E \xi(\cdot)}\Big\}.
 \end{eqnarray*}
 Using similar estimates to \eqref{Rest} and \eqref{seriesfinite}, we can show that the last series is finite, hence $R \in L^1$.\\
\end{proof}

\begin{proof}[{\bf Lemma \ref{lemFprop}}]
It is obvious that $F$ is increasing in $\epsilon$. A direct computation shows that
    \begin{eqnarray*}
      0<  \frac{\partial F}{\partial u} (\xi,u) &=& \exp \Big\{-\frac{\delta }{1+\epsilon u+ \epsilon \xi}\Big\} \Big[1+\frac{\delta \epsilon u}{(1+\epsilon \xi +\epsilon u)^2} \Big]\\
      &\leq& \frac{1+\frac{\delta \epsilon u}{(1+\epsilon \xi +\epsilon u)^2}}{1+\frac{\delta }{1+\epsilon u+ \epsilon \xi} +\frac{\delta^2 }{(1+\epsilon u+ \epsilon \xi)^2}} \\
      &=& \frac{(1+\epsilon u+ \epsilon \xi)^2+\delta \epsilon u}{(1+\epsilon u+ \epsilon \xi)^2 + \delta (1+\epsilon u+ \epsilon \xi) +\delta^2}\\
      &=& 1- \frac{\delta(1+\delta+\epsilon \xi)}{(1+\epsilon u+ \epsilon \xi)^2 + \delta (1+\epsilon u+ \epsilon \xi) +\delta^2}\\
      &\leq& 1- \frac{\delta(1+\delta+\epsilon \xi)}{(1+\delta+\epsilon \xi +\epsilon u)^2}
    \end{eqnarray*}
    which proves \eqref{Fprop}.\\
\end{proof}

\begin{proof} [{\bf Lemma \ref{lemmaF2}}]
It follows from \eqref{uk1F} and \eqref{F2} that $u_{k+1}(\omega) < u_k(\omega) + \xi(\theta_{k+1}\omega)$, hence by induction
\[
u_k(\omega) \leq u_0(\omega) +S_k(\omega) ,\quad  \forall 1\leq k \in \N.
\]
Since the function $\exp \big\{-\frac{\delta}{1+ \epsilon u + \epsilon \xi}\big\}$ is increasing in $u$, it follows that 
\begin{eqnarray*}
    u_{k+1}(\omega) &\leq& u_k(\omega) \exp \Big\{-\frac{\delta}{1+ \epsilon \big(u_0(\omega) + S_k(\omega)+ \xi(\theta_{k+1}\omega) \big)}\Big\} +\xi(\theta_{k+1}\omega)\notag\\
    &\leq& u_k \exp \Big\{-\frac{\delta}{1+ \epsilon u_0(\omega) + \epsilon S_{k+1}(\omega)}\Big\}+\xi(\theta_{k+1}\omega),\quad \forall k\geq 1.
\end{eqnarray*}
Therefore, it is easy to prove by induction that
\begin{eqnarray*}\label{uF2}
    u_n(\omega) &\leq& u_0(\omega) \prod_{k =1}^n \exp \Big\{-\frac{\delta}{1+ \epsilon u_0(\omega) + \epsilon S_k(\omega)}\Big\}+ \sum_{k=1}^n \xi(\theta_k \omega)\prod_{j =k+1}^n \exp \Big\{-\frac{\delta}{1+ \epsilon u_0(\omega) + \epsilon S_j(\omega)}\Big\}\notag\\
    &\leq& u_0(\omega) \exp\Bigg\{ \sum_{k=1}^n \frac{-\delta}{1+ \epsilon u_0(\omega) + \epsilon S_k(\omega)}\Bigg\} + \sum_{k=1}^n \xi(\theta_k \omega) \exp \Big\{\sum_{j =k+1}^n\frac{-\delta}{1+ \epsilon u_0(\omega) + \epsilon S_j(\omega)}\Big\}.
\end{eqnarray*}
Replacing $\omega$ by $\theta_{-n}(\omega)$ yields
\begin{eqnarray}\label{uF3}
u_n(\theta_{-n}\omega) &\leq& u_0(\theta_{-n}\omega) \exp\Bigg\{ \sum_{k=1}^n \frac{-\delta}{1+ \epsilon u_0(\theta_{-n}\omega) + \epsilon S_k(\theta_{-n}\omega)}\Bigg\}\notag\\
&&+ \sum_{k=1}^n \xi(\theta_{k-n} \omega) \exp \Big\{\sum_{j =k+1}^n\frac{-\delta}{1+ \epsilon u_0(\theta_{-n}\omega) + \epsilon S_j(\theta_{-n}\omega)}\Big\} \notag\\
&\leq& u_0(\theta_{-n}\omega) \exp\Bigg\{ \sum_{k=1}^n \frac{-\delta}{1+ \epsilon u_0(\theta_{-n}\omega) + \epsilon \sum_{i=n-k}^{n-1} \xi(\theta_{-i}\omega)}\Bigg\}\notag\\
&&+ \sum_{k=1}^n \xi(\theta_{k-n} \omega) \exp \Big\{\sum_{j =k+1}^n\frac{-\delta}{1+ \epsilon u_0(\theta_{-n}\omega) + \epsilon \sum_{i=n-j}^{n-1} \xi(\theta_{-i}\omega)}\Big\} \notag\\
&\leq& u_0(\theta_{-n}\omega) \exp\Bigg\{ \sum_{k=0}^{n-1} \frac{-\delta}{1+ \epsilon u_0(\theta_{-n}\omega) + \epsilon \sum_{i=1}^{k} \xi(\theta_{-i}\omega)}\Bigg\}\notag\\
&&+ \sum_{k=1}^n \xi(\theta_{k-n} \omega) \exp \Big\{\sum_{j =0}^{n-k-1}\frac{-\delta}{1+ \epsilon u_0(\theta_{-n}\omega) + \epsilon \sum_{i=1}^{j} \xi(\theta_{-i}\omega)}\Big\} \notag\\
&\leq& u_0(\theta_{-n}\omega) \exp\Bigg\{ \sum_{k=0}^{n-1} \frac{-\delta}{1+ \epsilon u_0(\theta_{-n}\omega) + \epsilon \sum_{i=1}^{k} \xi(\theta_{-i}\omega)}\Bigg\}\notag\\
&&+ \sum_{k=0}^{n-1} \xi(\theta_{-k} \omega) \exp \Big\{\sum_{j =0}^{k-1}\frac{-\delta}{1+ \epsilon u_0(\theta_{-n}\omega) + \epsilon \sum_{i=1}^{j} \xi(\theta_{-i}\omega)}\Big\}\notag\\
&\leq& \|u_0(\omega)\|_\infty \exp\Bigg\{\sum_{k=0}^{n-1} \frac{-\delta}{1+ \epsilon \|u_0(\omega)\|_\infty + \epsilon S_{-k}(\omega)}\Bigg\}\notag\\
&&+ \sum_{k=0}^{n-1} \xi(\theta_{-k} \omega) \exp \Big\{\sum_{j =0}^{k-1}\frac{-\delta}{1+ \epsilon \|u_0(\omega)\|_\infty + \epsilon S_{-j}(\omega)}\Big\}
\end{eqnarray}
which proves \eqref{uk2F}.\\

In addition, for any two sequences $\{u_k(\omega,u_0(\omega))\}_{k\in \N}, \{\bar{u}_k(\omega,\bar{u}_0(\omega))\}_{k\in \N}$, it follows from the Lagrange mean value theorem and \eqref{Fprop} that there exists $\kappa \in (0,1)$ such that
\begin{eqnarray*}
    |u_{k+1}(\omega)-\bar{u}_{k+1}(\omega)| &=& |F(\xi(\theta_{k+1}\omega),u_k) -F(\xi(\theta_{k+1}\omega),\bar{u}_k)|\\
    &=& \Big| \frac{\partial F}{\partial u} (\xi(\theta_{k+1}\omega),\kappa u_k + (1-\kappa)\bar{u}_k)\Big| |u_k(\omega)-\bar{u}_k(\omega)|\\
    &\leq& \Big(1- \frac{\delta(1+\delta+\epsilon \xi(\theta_{k+1}\omega))}{\Big[1+\delta+\epsilon \xi(\theta_{k+1}\omega) +\epsilon \big(\kappa u_k(\omega) + (1-\kappa)\bar{u}_k(\omega)\big)\Big]^2}\Big) |u_k(\omega)-\bar{u}_k(\omega)|\\
    &\leq& \exp \Big\{\frac{-\delta(1+\delta+\epsilon \xi(\theta_{k+1}\omega))}{\Big[1+\delta+\epsilon \xi(\theta_{k+1}\omega) +\epsilon \big( u_k(\omega) \vee \bar{u}_k(\omega)\big)\Big]^2} \Big\} |u_k(\omega)-\bar{u}_k(\omega)|.
\end{eqnarray*}
Hence by induction,
\begin{equation}\label{udiff}
    |u_n(\omega)-\bar{u}_n(\omega)| \leq \prod_{k=0}^{n-1} \exp \Big\{\frac{-\delta(1+\delta+\epsilon \xi(\theta_{k+1}\omega))}{\Big[1+\delta+\epsilon \xi(\theta_{k+1}\omega) +\epsilon \big( u_k(\omega) \vee \bar{u}_k(\omega)\big)\Big]^2} \Big\} |u_0(\omega)-\bar{u}_0(\omega)|.  
\end{equation}
By replacing $\omega$ by $\theta_{-n}\omega$ with \eqref{udiff} and using \eqref{uk2F}, we obtain
\begin{eqnarray}\label{udiff2}
    &&|u_n(\theta_{-n}\omega)-\bar{u}_n(\theta_{-n}\omega)| \notag\\
    &\leq& \prod_{k=0}^{n-1} \exp \Big\{\frac{-\delta(1+\delta+\epsilon \xi(\theta_{k+1}\circ \theta_{-n}\omega))}{\Big[1+\delta+\epsilon \xi(\theta_{k+1}\circ \theta_{-n}\omega) +\epsilon \big( u_k(\theta_{-n}\omega) \vee \bar{u}_k(\theta_{-n}\omega)\big)\Big]^2} \Big\} |u_0(\theta_{-n}\omega)-\bar{u}_0(\theta_{-n}\omega)|\notag\\
    &\leq& \prod_{k=0}^{n-1} \exp \Big\{\frac{-\delta(1+\delta+\epsilon \xi(\theta_{k+1-n}\omega))}{\Big[1+\delta+\epsilon \xi(\theta_{k+1-n}\omega) +\epsilon \big( u_k(\theta_{-k}\circ  \theta_{k-n}\omega) \vee \bar{u}_k(\theta_{-k}\circ \theta_{k-n}\omega)\big)\Big]^2} \Big\} \notag\\
    &&\times|u_0(\theta_{-n}\omega)-\bar{u}_0(\theta_{-n}\omega)|\notag\\
    &\leq& \exp \Bigg\{\sum_{k=0}^{n-1} \frac{-\delta(1+\delta+\epsilon \xi(\theta_{k+1-n}\omega))}{\Big[1+\delta+\epsilon \xi(\theta_{k+1-n}\omega) +\epsilon \bar{d}(\theta_{k-n}\omega) \Big]^2}  \Bigg\} \Big(\|u_0(\omega)\|_\infty+\|\bar{u}_0(\omega)\|_\infty\Big) 
\end{eqnarray}
where
\[
\bar{d}(\omega) =\|u_0(\omega)\|_\infty \vee \|\bar{u}_0(\omega)\|_\infty  + R\Big(\epsilon,\|u_0(\omega)\|_\infty \vee \|\bar{u}_0(\omega)\|_\infty ,\xi(\omega)\Big) \in L^1.
\]
Taking the logarithm in both sides of \eqref{udiff2} and then dividing by $n$ and letting $n$ to infinity and applying Birkhorff ergodic theorem, we obtain
\begin{eqnarray}
   &&\limsup \limits_{n \to \infty} \frac{1}{n} \log   |u_n(\theta_{-n}\omega)-\bar{u}_n(\theta_{-n}\omega)| \notag\\
   &\leq&  \limsup \limits_{n \to \infty} \frac{1}{n} \log \Big(\|u_0(\omega)\|_\infty+\|\bar{u}_0(\omega)\|_\infty\Big) +  \limsup \limits_{n \to \infty} \frac{1}{n} \sum_{k=0}^{n-1} \frac{-\delta(1+\delta+\epsilon \xi(\theta_{k+1-n}\omega))}{\Big[1+\delta+\epsilon \xi(\theta_{k+1-n}\omega) +\epsilon \bar{d}(\theta_{k-n}\omega) \Big]^2}\notag\\
   &\leq& \limsup \limits_{n \to \infty} \frac{1}{n} \sum_{k=1}^{n} \frac{-\delta(1+\delta+\epsilon \xi(\theta_{-k+1}\omega))}{\Big[1+\delta+\epsilon \xi(\theta_{-k+1}\omega) +\epsilon \bar{d}(\theta_{-k}\omega) \Big]^2} = -\E \frac{\delta(1+\delta+\epsilon \xi(\theta_{1}\cdot))}{\Big[1+\delta+\epsilon \xi(\theta_{1}\cdot) +\epsilon \bar{d}(\cdot) \Big]^2} <0
\end{eqnarray}
where we use the fact that $\frac{\delta(1+\delta+\epsilon \xi(\theta_{1}\cdot))}{[1+\delta+\epsilon \xi(\theta_{1}\cdot) +\epsilon \bar{d}(\cdot) ]^2} \in (0,1)$. In other words, 
\begin{equation}\label{udiff1}
\limsup \limits_{n \to \infty}  |u_n(\theta_{-n}\omega)-\bar{u}_n(\theta_{-n}\omega)| = 0\quad \text{exponentially and a.s.}
\end{equation}
As a result, for any sequence $\{u_k(\omega)\}$ satisfying \eqref{u0}, the omega limit set $u_\infty(\omega)$ defined by \eqref{pullbacku} is bounded from above due to \eqref{uk2F} and closed by definition, hence \eqref{uinfty} follows from \eqref{uF3} by taking $n\to \infty$. For any point $r(\omega) \in u_\infty(\omega)$, there exists a subsequence $n_i \to \infty$ such that $u_{n_i}(\theta_{-n_i}\omega) \to r(\omega)$ as $n_i \to \infty$. Together with \eqref{udiff1}, it follows that $\bar{u}_{n_i}(\theta_{-n_i}\omega) \to r(\omega)$ as $n_i \to \infty$, which implies that $r(\omega) \in \bar{u}_\infty(\omega)$ for any other sequence $\{\bar{u}_k(\omega)\}$ satisfying \eqref{u0}. This proves the independence of $u_\infty$ on $u_0$. The estimate \eqref{uinfty} is obvious from the definition of $u_\infty$. Finally, under condition \eqref{Exi}, the right hand side of \eqref{uk2F} is integrable, thus $|u_\infty(\cdot)| \in L^1$.\\
\end{proof}
		\bibliographystyle{apalike}

	\end{document}